\newtheorem{theorem}{Theorem}[section]
\newtheorem{lemma}[theorem]{Lemma}
\newtheorem{proposition}[theorem]{Proposition}
\newtheorem*{thm:main}{Theorem~\ref{thm:main}}
\newenvironment{clproof}{\begin{list}{}{%
              \setlength{\leftmargin}{5mm}%
              } \item {\it Proof.} }{\hfill$\lozenge$\end{list}\medskip}
\newtheorem*{THMMAIN}{Theorem \ref{thm:largelrw}}
\newtheorem{CON}[theorem]{Conjecture}
\newtheorem{CLAIM}{Claim}
\newtheorem{QUE}{Question}
\theoremstyle{remark}
\theoremstyle{definition}
\newcommand\abs[1]{\lvert #1\rvert}
\newcommand\limb{\mathcal{L}}
\newcommand\limbhat{\mathcal{LG}}
\newcommand\limbtil{\mathcal{LC}}
\newcommand\dist{\operatorname{dist}}
\newcommand\rank{\operatorname{rank}}
\newcommand\lrw{\operatorname{lrw}}
\newcommand\pw{\operatorname{pw}}
\newcommand\cutrk{\operatorname{cutrk}}
\newcommand\obt{\Omega_T}
\newcommand\cB{\mathcal{B}}
\newcommand\cO{\mathcal{O}}
\newcommand{\macro}[3]{\newcommand{#1}[#3]{#2}}
\macro{\lbwd}{\operatorname{pw}(#1)}{1}
\macro{\bwd}{\operatorname{bw}(#1)}{1}
\newcommand{\bag}[2]{\textsf{bag}_{#1}(#2)}
\newcommand{\node}[2]{\textsf{node}_{#1}(#2)}
\newcommand{\origin}[1]{\mathcal{G}[#1]}
\def\cD{\mathcal{D}}
\begin{document}
\title{Linear rank-width of distance-hereditary graphs II. Vertex-minor obstructions}

\author[1]{Mamadou Moustapha Kant\'e}

\author[2]{O-joung Kwon\thanks{Supported by the European Research Council (ERC) under the European Union's Horizon 2020 research and innovation programme (ERC consolidator grant DISTRUCT, agreement No. 648527).}}

\affil[1]{Universit\'e Clermont Auvergne, LIMOS, CNRS, Aubi\`ere, France.}
\affil[2]{Logic and Semantics, Technische Universit\"at Berlin, Berlin, Germany.}
\date\today
\maketitle

\footnotetext{E-mail addresses: \texttt{mamadou.kante@uca.fr} (M. M. Kant\'e) and \texttt{ojoungkwon@gmail.com} (O. Kwon)}

\begin{abstract} 
	In the companion paper [Linear rank-width of distance-hereditary graphs I. A polynomial-time algorithm, Algorithmica 78(1):342--377, 2017], we presented a characterization of the linear rank-width of distance-hereditary graphs,
  	from which we derived an algorithm to compute it in polynomial time.  In this paper, we investigate structural properties of distance-hereditary graphs based on this characterization.

	 First, we prove that for a fixed tree $T$, 
  	every distance-hereditary graph of sufficiently large linear rank-width contains a vertex-minor isomorphic to $T$.  
 	We extend this property to bigger graph classes, namely, classes of graphs whose prime induced subgraphs have bounded linear rank-width.
 	Here, prime graphs are graphs containing no splits. 
 	We conjecture that for every tree $T$, every graph of sufficiently large linear rank-width contains a vertex-minor isomorphic to $T$.
	Our result implies that it is sufficient to prove this conjecture for prime graphs.

 	 For a class $\Phi$
  	of graphs closed under taking vertex-minors, a graph $G$ is called a \emph{vertex-minor obstruction} for $\Phi$ if $G\notin \Phi$ but all of its proper vertex-minors are contained in $\Phi$. 
  	Secondly, we provide, for each $k\ge 2$, a set of distance-hereditary graphs that contains all distance-hereditary vertex-minor
  	obstructions for graphs of linear rank-width at most $k$.  
    Also, we give a simpler way to obtain the known vertex-minor obstructions for graphs of linear rank-width at most $1$.
\end{abstract}

\section{Introduction} 

	\emph{Linear rank-width} is a linear-type width parameter of graphs motivated by the rank-width of graphs~\cite{OumS06}. 
	The \emph{vertex-minor} relation is a graph containment relation which was introduced by Bouchet~\cite{Bouchet1987a, Bouchet1987b, Bouchet88, Bouchet1988, Bouchet1989a} 
	in his studies of circle graphs and 4-regular Eulerian digraphs. 
	The vertex-minor relation has an important role in the theory of (linear) rank-width~\cite{Oum2004a, Oum05, Oum2006a, JKO2014, Oum12} as (linear) rank-width does not increase when taking vertex-minors of a graph. 
	We provide concise definitions in Section~\ref{sec:prelim}.
	
	The problem of computing linear rank-width has been discussed recently. 
	Kashyap~\cite{Kashyap2008} proved that it is NP-hard to compute matroid path-width on binary matroids.
	Proposition 3.1 in~\cite{Oum05} shows that the problem of determining the linear rank-width of a bipartite graph is equivalent to the problem of determining the path-width of a binary matroid, 
	and from this relation, we can show that computing linear rank-width is NP-hard in general.
	Adler and the authors of this paper~\cite{AdlerKK15} proved that the linear rank-width of distance-hereditary graphs, which are graphs of rank-width $1$, can be computed in time $\mathcal{O}(n^2\log n)$
	where $n$ is the number of vertices in an input graph. 
	Jeong, Kim, and Oum \cite{JeongKO16} showed that, there is a constructive algorithm to test whether a given graph has linear
rank-width at most $k$ in time $f(k)\cdot n^3$ for some function $f$. 
	Using this, they also proved that for every fixed integer $w$, there is a polynomial-time algorithm to compute linear rank-width on graphs of rank-width $w$.

	In this paper, we focus on structural aspects of linear rank-width.
		 The first result of the Graph Minor series papers is that for a fixed tree $T$, every graph of sufficiently large path-width contains a minor isomorphic to $T$~\cite{RobertsonS83}, 
	 and this was later used by Blumensath and Courcelle~\cite{BlumensathC10}
	to define a hierarchy of \emph{incidence graphs} based on \emph{monadic second-order transductions}.  In order to obtain a similar hierarchy for graphs, still based on monadic second-order
	transductions, Courcelle~\cite{CourcelleBanhoff08} asked whether for a fixed tree $T$, every bipartite graph of sufficiently large linear rank-width contains a vertex-minor isomorphic to $T$. 
	We conjecture that it is true for any graph. 
 	\begin{CON}\label{con:tree}
	For every fixed tree $T$, there is an integer $f(T)$ such that every graph of linear rank-width at least $f(T)$ contains a vertex-minor isomorphic to $T$.	
	\end{CON}

 	We show that Conjecture~\ref{con:tree} is true if and only if it is true in prime graphs with respect to \emph{split decompositions}~\cite{Cunningham1982}.  A \emph{split} in a graph is
        a vertex partition $(A,B)$ such that $\abs{ A}$, $\abs{B} \ge 2$ and the set of edges joining $A$ and $B$ induces a complete bipartite subgraph.  \emph{Prime graphs} are graphs without splits
        and they form, with complete graphs and stars, the basic graphs in the theory of canonical split decompositions developed by Cunningham~\cite{Cunningham1982}.  They are also considered when studying the rank-width of graphs because the
        rank-width of a graph is the maximum rank-width over all its prime induced subgraphs. 
        
        We prove the following.

\begin{theorem}\label{thm:largelrw}
	Let $p$ be a positive integer and let $T$ be a tree. Let $G$ be a graph such that every prime induced subgraph of $G$ has linear rank-width at most $p$.
	If $G$ has linear rank-width at least $40(p+2)\abs{V(T)}$, then $G$ contains a vertex-minor isomorphic to $T$.	
\end{theorem}

  A graph $G$ is \emph{distance-hereditary} if for every connected induced subgraph $H$ of $G$ and two vertices $v$ and $w$ in $H$, 
  the distance between $v$ and $w$ in $H$ is the same as their distance in $G$.
  It is known that every prime induced subgraph of a distance-hereditary graph has size at most $3$~\cite{Bouchet88}. 
 Together with this fact, our result implies that Conjecture~\ref{con:tree} is also true for distance-hereditary graphs.

	To prove Theorem~\ref{thm:largelrw}, we essentially prove that 
 	for a fixed tree $T$, every graph admitting a canonical split decomposition whose decomposition tree has sufficiently large path-width contains a vertex-minor isomorphic to $T$.
	Combined with a relation between the linear rank-width of a  graph and the path-width of its canonical split decomposition, we obtain Theorem~\ref{thm:largelrw}. We will obtain such a relation in Section~\ref{sec:pwofcanonicaltrees}.
 The vertex-minor relation cannot be replaced with the induced subgraph relation because there is a cograph admitting a canonical split decomposition whose decomposition tree has sufficiently large path-width~\cite{CorneilLB1981, GP2012}, but cographs have no $P_4$ as an induced subgraph. 

 In the second part, we investigate the set of distance-hereditary vertex-minor obstructions for graphs of bounded linear rank-width.
 A graph is a \emph{vertex-minor obstruction} for graphs of linear
   rank-width $k$ if it has linear rank-width $k+1$ and every proper vertex-minor has linear rank-width $k$.  
   Robertson and Seymour~\cite{RS2004} showed that for every infinite sequence
 $G_1, G_2, \ldots $ of graphs, there exist $G_i$ and $G_j$ with $i<j$ such that $G_i$ is isomorphic to a minor of $G_j$. In other words, graphs are \emph{well-quasi-ordered} under the minor
 relation. Interestingly, this property implies that for any proper class $\mathcal{C}$ of graphs closed under taking minors, the set of minor obstructions for $\mathcal{C}$ is finite.

Oum~\cite{Oum2004a, Oum12} obtained an analogous result for the vertex-minor relation;  
for every infinite sequence $G_1$, $G_2, \ldots$ of graphs of bounded rank-width, there exist $G_i$ and $G_j$ with $i<j$ such that $G_i$ is isomorphic to a vertex-minor of $G_j$. 
We can obtain the following as a corollary.
\begin{theorem}[Oum~\cite{Oum2004a}]\label{thm:vertexminorwqo}
For every class $\mathcal{C}$ of graphs with bounded
rank-width that is closed under taking vertex-minors, there is a finite list of graphs $G_1$, $G_2, \ldots, G_m$
such that a graph is in $\mathcal C$ if and only if it has no
vertex-minor isomorphic to $G_i$ for some $i\in \{1,2, \ldots, m\}$.
\end{theorem}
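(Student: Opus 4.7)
The plan is to derive the theorem as a direct consequence of the well-quasi-ordering result quoted immediately above. Let $\cO$ denote the set of vertex-minor obstructions for $\mathcal{C}$: graphs $G\notin\mathcal{C}$ whose every proper vertex-minor lies in $\mathcal{C}$. Because $\mathcal{C}$ is closed under taking vertex-minors, a graph $H$ fails to belong to $\mathcal{C}$ if and only if $H$ has some vertex-minor isomorphic to a member of $\cO$, so it suffices to show that $\cO$ is finite up to isomorphism.

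The first step is to bound the rank-width of members of $\cO$. Fix $k$ with $\mathcal{C}$ contained in the class of graphs of rank-width at most $k$. For any $G\in\cO$ and any vertex $v$, the induced subgraph $G-v$ is a proper vertex-minor of $G$, hence lies in $\mathcal{C}$ and satisfies $\rw(G-v)\le k$. Extending a rank-decomposition of $G-v$ by attaching $v$ as a new leaf yields a rank-decomposition of $G$ of width at most $k+1$, so every $G\in\cO$ satisfies $\rw(G)\le k+1$.

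Next I would invoke Oum's theorem within the class of graphs of rank-width at most $k+1$. Suppose, for contradiction, that $\cO$ is infinite up to isomorphism. Since each local equivalence class contains only finitely many isomorphism classes (all its graphs share a common vertex set), infinitely many isomorphism classes in $\cO$ produce infinitely many local equivalence classes, so we may list representatives $G_1,G_2,\ldots$ from pairwise distinct local equivalence classes. The well-quasi-ordering theorem then produces indices $i<j$ and a vertex-minor $H$ of $G_j$ with $H\cong G_i$. Either $H$ uses at least one vertex deletion, making $H$ a proper vertex-minor of $G_j$ and forcing $G_i\cong H\in\mathcal{C}$, which contradicts $G_i\in\cO$; or $H$ is obtained from $G_j$ by local complementations only, making $G_i$ locally equivalent to $G_j$, contradicting our choice of representatives. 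Hence $\cO$ is finite up to isomorphism, and listing one representative from each isomorphism class produces the desired $G_1,\ldots,G_m$.

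The main delicate point is that the vertex-minor relation is a quasi-order but not a partial order, since locally equivalent non-isomorphic graphs are mutually vertex-minors; this forces us to extract representatives from distinct local equivalence classes before applying Oum's theorem. Once the rank-width of obstructions is bounded, the remainder follows the standard pattern by which well-quasi-ordering yields a finite obstruction set, analogous to the Robertson--Seymour deduction for minor-closed classes.
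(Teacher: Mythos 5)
Your proof is correct and takes exactly the route the paper intends: the paper states this theorem without proof, as a corollary of Oum's well-quasi-ordering result quoted just above it, and your argument supplies that standard derivation, including the two points that genuinely need care (bounding the rank-width of the obstructions by $k+1$ via deleting a vertex, and choosing representatives of distinct local-equivalence classes, understood up to isomorphism, so that the deletion-free case of the well-quasi-ordering conclusion also yields a contradiction).
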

Theorem~\ref{thm:vertexminorwqo} implies that for every integer $k$, the class of all graphs of (linear) rank-width at most $k$ can be characterized by a finite list of vertex-minor obstructions. However, it does not give any explicit number of necessary vertex-minor obstructions or bound on the size of such graphs.
Oum~\cite{Oum05} proved that for each $k$, the size of a vertex-minor obstruction for graphs of rank-width at most $k$ is at most $(6^{k+1}-1)/5$.
	For linear rank-width, obtaining such an upper bound on the size of vertex-minor obstructions remains an open problem.
	Jeong, Kwon, and Oum~\cite{JKO2014} showed that  the number of vertex-minor obstructions for linear rank-width at most $k$ is at least $2^{\Omega(3^k)}$.

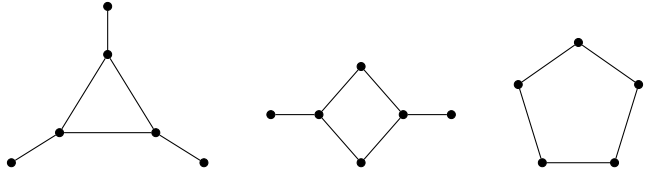
\begin{figure}
 \tikzstyle{v}=[circle, draw, solid, fill=black, inner sep=0pt, minimum width=3pt]
  \centering
     \begin{tikzpicture}[scale=0.8]
    \node[v](v1) at (0,.8){};
    \node[v](v2) at (0,1.6){};
    \node[v](v3) at (-.8,-.5){};
    \node[v](v4) at (.8,-.5){};
    \node[v](v5) at (-1.6,-1){};
    \node[v](v6) at (1.6,-1){};
      \draw (v1)--(v3)--(v4)--(v1);
      \draw (v1)--(v2);
      \draw (v3)--(v5);
      \draw (v4)--(v6);
    \end{tikzpicture}\quad     \quad
    \begin{tikzpicture}[scale=0.8]
    \node[v](v1) at (0,2){};
    \node[v](v2) at (1-.2,2){};
    \node[v](v3) at (1.5,1+.2){};
    \node[v](v4) at (1.5,3-.2){};
    \node[v](v5) at (2+.2,2){};
    \node[v](v6) at (3,2){};
      \draw (v1)--(v2)--(v3)--(v5)--(v6);
      \draw (v2)--(v4)--(v5);
    \end{tikzpicture}\quad\quad
      \begin{tikzpicture}[scale=0.8]
    \node[v](v1) at (1,3){};
    \node[v](v2) at (2,2.3){};
    \node[v](v3) at (1.6,1){};
    \node[v](v4) at (0.4,1){};
    \node[v](v5) at (0, 2.3){};
      \draw (v1)--(v2)--(v3)--(v4)--(v5)--(v1);
    \end{tikzpicture}  \caption{The three vertex-minor obstructions for graphs of linear rank-width at most $1$. The first two graphs are distance-hereditary. }
  \label{fig:vmobslrw1}
\end{figure}

	Adler, Farley, and Proskurowski~\cite{AdlerFP11} obtained the set of all three vertex-minor obstructions for graphs of linear rank-width at most $1$, depicted in Figure~\ref{fig:vmobslrw1}, 
	two of which are distance-hereditary.  
	In this paper, we construct a set of graphs containing all vertex-minor obstructions for graphs of linear rank-width at most $k$ that are distance-hereditary. 
	This is an analogous result to the characterization of acyclic minor obstructions for graphs of path-width at most $k$,
	investigated by Takahashi, Ueno, and Kajitani~\cite{TakahashiUK94}, and Ellis, Sudborough, and Turner~\cite{EllisST94}.
	As a similar work, Koutsonas, Thilikos, and Yamazaki~\cite{Thilikos2014}
	characterized matroid obstructions for bounded matroid path-width that are cycle matroids of outerplanar graphs.
	
	Lastly, we obtain simpler proofs of known characterizations of graphs of linear rank-width at most $1$~\cite{AdlerFP11,Bui-XuanKL13}.

	The paper is organized as follows.  
	Section~\ref{sec:prelim} provides some preliminary concepts, including linear rank-width and vertex-minors. 
	In Section~\ref{sec:splitdecs}, we introduce necessary notions regarding split decompositions, 
	and restate the structural characterization of linear rank-width on distance-hereditary graphs.
	Section~\ref{sec:pwofcanonicaltrees} presents a relation between the linear rank-width of a graph whose prime induced subgraphs have bounded linear rank-width 
	and the path-width of its decomposition tree. 
	From this, we prove Theorem~\ref{thm:largelrw} in Section~\ref{sec:treevertexminor}.
	In Section~\ref{sec:obstructions}, 
	we provide a way to generate all vertex-minor obstructions for graphs of bounded linear rank-width that are distance-hereditary graphs.  
	Section~\ref{sec:charlrw1} presents
	simpler proofs for known characterizations of the graphs of linear rank-width at most $1$.

\section{Preliminaries}\label{sec:prelim}

	In this paper, graphs are finite, simple and undirected.  Our graph terminology is standard, see for instance \cite{Diestel05}. 
	Let $G$ be a graph. We denote the vertex set of
	$G$ by $V(G)$ and the edge set by $E(G)$. 
	For $X\subseteq V(G)$, we denote by $G[X]$ the subgraph of $G$ induced
	by $X$, and let $G- X:=G[V(G)\setminus X]$. 
	For $v\in V(G)$, we write $G- x$ for $G- \{x\}$. 
	For $F\subseteq E(G)$, let $G-F:=(V(G), E(G)\setminus F)$.
	For a vertex $x$ of $G$, let $N_G(x)$ be the set of \emph{neighbors} of $x$ in
	$G$ and we call $\abs{N_G(x)}$ the \emph{degree} of $x$ in $G$.
	Two vertices $x$
	and $y$ are \emph{twins} if $N_G(x)\setminus \{y\}=N_G(y)\setminus \{x\}$. 
	An edge $e$ of a connected $G$ is a \emph{cut-edge} if $G-e$ is disconnected. 
	A vertex $v$ in a connected graph $G$ is a \emph{cut vertex} if $G-v$ is disconnected.
	A connected graph is \emph{$2$-connected} if it has at least $3$ vertices and has no cut vertices.

A \emph{tree} is a connected graph containing no cycles. 
A vertex of degree one in a tree is called a \emph{leaf}.  A \emph{subcubic tree} is a tree with maximum degree at most three, and 
a \emph{path} is a tree with maximum degree at most two.
The \emph{length} of a path is the number of its edges.  A \emph{star} is a tree with a distinguished vertex, called its \emph{center}, adjacent to all other vertices.  A \emph{complete graph} is a
graph with all possible edges.  A graph $G$ is called \emph{distance-hereditary} if for every pair of two vertices $x$ and $y$ of $G$ the distance of $x$ and $y$ in $G$ equals the distance of $x$ and
$y$ in any connected induced subgraph containing both $x$ and $y$~\cite{BandeltM86}. It is well-known that a graph is distance-hereditary if and only if it can be obtained from a single vertex by repeated addition of degree one vertices and twins \cite{HammerM90}. 
	An induced cycle of length at least $5$ is not distance-hereditary.

A subset $F$ of the edge set of $G$ is called a \emph{matching} if no two edges in $F$ share an end vertex.
 
For an edge $e$ of a graph $G$, we denote by $G/e$ the graph obtained by contracting $e$. 
A graph $H$ is a \emph{minor} of a graph $G$ if $H$ is obtained from a subgraph of $G$ by contractions of edges. 

\subsection{Linear rank-width}\label{subsec:lrw-vm}

For sets $R$ and $C$, an \emph{$(R,C)$-matrix} is a matrix whose rows and columns are indexed by $R$ and $C$, respectively.
  For an $(R,C)$-matrix $M$ and subsets  $X\subseteq R$ and $Y\subseteq C$, 
  let $M[X,Y]$ be the submatrix of $M$ whose rows and columns are indexed by $X$ and $Y$, respectively.

	Let $G$ be a graph. We denote by $A_G$ the \emph{adjacency matrix} of $G$ over the binary field; that is, for $v,w\in V(G)$, $A_G[v,w]=1$ if $v$ is adjacent to $w$, 
	and $A_G[v,w]=0$, otherwise. 
	For a graph $G$, let $\cutrk_G^*:2^{V(G)}\times 2^{V(G)} \to \mathbb{Z}$ be a function such that
	$\cutrk_G^*(X,Y):=\rank(A_G[X,Y])$ for all $X,Y\subseteq V(G)$, where rank is computed over the binary field.  
	The \emph{cut-rank function} of $G$ is the function $\cutrk_G:2^{V(G)}\rightarrow \mathbb{Z}$ where for each $X\subseteq V(G)$,
\[\cutrk_G(X):= \cutrk_G^*(X,V(G)\setminus X).\] 
	An ordering $(x_1, \ldots, x_n)$ of the vertex set $V(G)$ is called a \emph{linear layout} of $G$.  
	If $\abs{V(G)}\ge 2$, then the \emph{width} of a linear layout $(x_1,\ldots, x_n)$ of $G$ is defined
as
\[\max_{1\le i\le n-1}\{\cutrk_G(\{x_1,\ldots,x_i\})\},\]
and if $\abs{V(G)}=1$, then the width is defined to be $0$.
	The \emph{linear rank-width} of $G$, denoted by $\lrw(G)$, is defined as the minimum width over all linear layouts of $G$. 

	Caterpillars and complete graphs have linear rank-width at most $1$. Ganian~\cite{Ganian10} gave a characterization of 
	graphs of linear rank-width at most $1$, and called them \emph{thread graphs}. Adler and Kant\'{e}~\cite{AdlerK13} showed that 
	linear rank-width and path-width coincide on forests, and therefore, there is a linear-time algorithm to compute the linear rank-width of forests.  
	It is easy to see that the linear rank-width of a graph is the maximum over the linear
rank-widths of its connected components. 

	For a linear layout $L$ of a graph $G$ and $v,w\in V(G)$, 
	we denote $v\le_L w$ if $v=w$ or $v$ appears before $w$ in the linear layout. 
	For two orderings $(v_1, v_2, \ldots, v_n)$ and $(w_1, w_2, \ldots, w_m)$, 
	we denote 
	\[(v_1, v_2, \ldots, v_n)\oplus (w_1, w_2, \ldots, w_m):=(v_1, v_2, \ldots, v_n, w_1, w_2, \ldots, w_m).\]

\subsection{Vertex-minors}  
	For a graph $G$ and a vertex $x$ of $G$, the \emph{local complementation at $x$} in $G$ is an operation to replace the subgraph induced 
	by the set of neighbors of $x$ with its complement. 
	The resulting graph is denoted by $G*x$.  If a graph $H$ can be obtained from $G$ by applying a sequence of local complementations, then $G$ and $H$ 
	are called \emph{locally equivalent}.  A graph $H$ is called a
	\emph{vertex-minor} of a graph $G$ if $H$ can be obtained from $G$ by applying a sequence of local complementations and deletions of vertices.
	Bouchet~\cite{Bouchet1989a} observed that local complementation does not change the cut-rank function. 
	This directly implies that 
	every vertex-minor 
   $H$ of $G$ satisfies that $\lrw(H) \leq \lrw(G)$. 
\begin{lemma}[Bouchet~\cite{Bouchet1989a}; See Corollary 2] \label{lem:vm-rw} 
Let $G$ be a graph and let $x$ be a vertex of $G$. Then for every subset $X$ of $V(G)$, we have $\cutrk_G(X)=\cutrk_{G*x}(X)$. 
\end{lemma}

For an edge $xy$ of $G$, let $W_1:=N_G(x)\cap N_G(y)$, $W_2:=(N_G(x)\setminus N_G(y))\setminus \{y\}$, and $W_3:=(N_G(y)\setminus N_G(x))\setminus \{x\}$.  The \emph{pivoting on $xy$} of $G$,
denoted by $G\wedge xy$, is the operation to flip the adjacencies between distinct sets $W_i$ and $W_j$, and swap the vertices $x$ and $y$. Flipping the adjacency between two vertices $v$ and $w$ is an operation that add an edge if there was no edge between $v$ and $w$, and remove an edge, otherwise. It is known that $G\wedge xy=G*x*y*x=G*y*x*y$ \cite[Proposition 2.1]{Oum05}. See Figure~\ref{fig:pivotex} for an example.

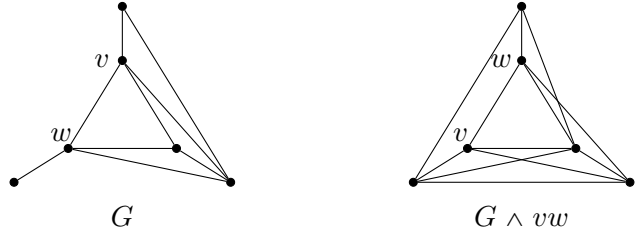
\begin{figure}
 \tikzstyle{v}=[circle, draw, solid, fill=black, inner sep=0pt, minimum width=3pt]
  \centering
     \begin{tikzpicture}[scale=0.9]
    \node[v](v1) at (0,.8){};
    \node[v](v2) at (0,1.6){};
    \node[v](v3) at (-.8,-.5){};
    \node[v](v4) at (.8,-.5){};
    \node[v](v5) at (-1.6,-1){};
    \node[v](v6) at (1.6,-1){};
      \draw (v1)--(v3)--(v4)--(v1);
      \draw (v1)--(v2);
      \draw (v1)--(v6);
      \draw (v3)--(v6);
      \draw (v2)--(v6);
      \draw (v3)--(v5);
      \draw (v4)--(v6);
      \draw (-0.3,.8) node{$v$};
      \draw (-0.6-0.3,-.3) node{$w$};
      \draw (0,-1.5) node{$G$};
    \end{tikzpicture}\qquad\qquad\qquad
         \begin{tikzpicture}[scale=0.9]
    \node[v](v1) at (0,.8){};
    \node[v](v2) at (0,1.6){};
    \node[v](v3) at (-.8,-.5){};
    \node[v](v4) at (.8,-.5){};
    \node[v](v5) at (-1.6,-1){};
    \node[v](v6) at (1.6,-1){};
      \draw (v1)--(v3)--(v4)--(v1);
      \draw (v1)--(v2);
      \draw (v3)--(v5);
      \draw (v4)--(v6);
      \draw (v1)--(v6);
      \draw (v3)--(v6);
      
      \draw (v2)--(v4);
      \draw (v5)--(v4);
      \draw (v5)--(v6);
      \draw (v2)--(v5);
      \draw (-0.3,.8) node{$w$};
      \draw (-0.6-0.3,-.3) node{$v$};
      \draw (0,-1.5) node{$G\wedge vw$};
    \end{tikzpicture}\caption{An example of pivoting.}
  \label{fig:pivotex}
\end{figure}

\subsection{Path-width}

A \emph{path decomposition} of a graph $G$ is a pair $(P,\cB)$, where $P$ is a path and $\cB=(B_t)_{t\in V(P)}$ is a family of vertex subsets of $G$ such that 
\begin{enumerate}
	\item for every $v\in V(G)$ there exists $t\in V(P)$ such that $v\in B_t$,
	\item for every $uv\in E(G)$ there exists $t\in V(P)$ such that $\{u,v\}\subseteq B_t$,
	\item for every $v\in V(G)$, the set $\{t\in V(P): v\in B_t\}$ induces a subpath of $P$.
\end{enumerate}
The \emph{width} of a path decomposition $(P,\cB)$ is defined as $\max\{\abs{B_t}: t\in V(P)\}-1$.
The \emph{path-width} of $G$, denoted by $\pw(G)$, is defined as the  minimum width over all path-decompositions of $G$.

	It is well known that if $H$ is a minor of $G$, then $\pw(H)\leq \pw(G)$. Robertson and Seymour~\cite{RobertsonS83} first proved that for a fixed tree $T$, every 
	graph of sufficiently large path-width contains a minor isomorphic to $T$. The necessary function was optimized by Bienstock, Robertson, Seymour, and Thomas~\cite{BienstockRST91}.

\begin{theorem}[Bienstock, Robertson, Seymour, and Thomas~\cite{BienstockRST91}]\label{thm:pathwidththeorem} For every forest $F$, every graph with path-width at least $\abs{V(F)}-1$ has a minor isomorphic to $F$. \end{theorem}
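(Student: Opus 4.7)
The plan is to argue by induction on $|V(F)|$. First I would reduce to the case where $F$ is a tree: any forest $F$ on $n$ vertices is a subforest of some tree $F^{*}$ on $n$ vertices (join the components by new edges), and any $F^{*}$-minor of $G$ yields an $F$-minor by retaining the same branch sets and discarding the added edges. So it is enough to show: for every tree $T$ on $n$ vertices, every graph of path-width at least $n-1$ contains $T$ as a minor.

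The base case $n=1$ is immediate, since $\pw(G)\ge 0$ forces $V(G)\ne\emptyset$. For the inductive step, pick a leaf $v$ of $T$ with unique neighbor $u$, and set $T':=T-v$, a tree on $n-1$ vertices. I would argue contrapositively: assuming $G$ has no $T$-minor, build a path decomposition of $G$ of width at most $n-2$. By the inductive hypothesis applied inside $G$, the tree $T'$ does embed as a minor of $G$ (whenever $\pw(G)\ge n-2$), and in fact we can consider the collection $\mathcal{M}$ of all minor models $\mu\colon V(T')\to 2^{V(G)}$ of $T'$ in $G$. For each $\mu\in\mathcal{M}$, no connected subgraph of $G-\bigcup_{x}\mu(x)$ is adjacent to $\mu(u)$, for otherwise $\mu$ would extend to a $T$-minor. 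This ``stuckness'' of every model is the structural input.

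The main step is to turn this stuckness into a linear layout. I would proceed by choosing a model $\mu$ minimizing $|\bigcup_{x}\mu(x)|$, identify a vertex $w$ in the branch set $\mu(u)$ that separates $G$ in a useful way, and recurse on the two sides of the separator $S$ formed by $w$ together with carefully chosen ``gateway'' vertices in the other branch sets, with $|S|\le n-1$. Concatenating the path decompositions obtained on each side, glued at the bag $S$, yields a path decomposition of $G$ of width at most $n-2$, completing the induction. Equivalently, one may phrase this via the vertex-separation-number characterization of path-width and build the linear ordering by iterating the extraction of such a ``stuck'' witness.

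The hardest part is the extraction step: one has to show that a ``stuck'' model $\mu$ produces a separator of size strictly less than $n$ and that each side of the separator inherits an appropriately stuck model of a smaller tree, so that the induction closes. This is precisely the technical heart of the Bienstock--Robertson--Seymour--Thomas argument; the subtleties lie in (i) picking $\mu$ to be minimal under a suitable well-founded order so that the separator has size exactly at most $n-1$, and (ii) ensuring that the subtree structure on each side of the separator is still a tree on fewer vertices, so the inductive hypothesis applies on each recursive side without loss.
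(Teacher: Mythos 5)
The paper does not prove this statement at all: it is quoted as a known theorem with a citation to \cite{BienstockRST91}, so your task amounted to reproving that result from scratch. Your reduction from forests to trees is fine, the base case is fine, and the contrapositive set-up with the observation that a ``stuck'' model $\mu$ of $T'=T-v$ satisfies $N_G(\mu(u))\subseteq\bigcup_{x}\mu(x)$ is correct. But everything up to that point is routine; the entire content of the theorem is the step you then defer, namely turning the stuckness of every $T'$-model into a path decomposition of $G$ of width at most $n-2$. You say explicitly that this extraction step ``is precisely the technical heart of the Bienstock--Robertson--Seymour--Thomas argument'' and you do not supply it, so what you have written is a plan for a proof rather than a proof.

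Moreover, the step as sketched would not close in the form stated. First, path decompositions cannot simply be ``glued at the bag $S$'': unlike tree decompositions, concatenating decompositions of the two sides of a separator $S$ requires each side to admit a width-$(n-2)$ decomposition in which (its trace of) $S$ lies in an end bag; with only the inductive bound $\pw(G_i)\le n-2$ on each side one gets estimates of the form $\pw(G)\le n-2+\abs{S}$, which is useless here. So the induction hypothesis must be strengthened (e.g.\ to a rooted or anchored statement prescribing a set of at most $n-1$ vertices to appear in the first bag), and your proposal does not formulate such a strengthening. Second, the two key claims --- that a model minimal in a ``suitable well-founded order'' yields a separator of size at most $n-1$, and that each side of that separator ``inherits an appropriately stuck model of a smaller tree'' --- are asserted without argument, and it is precisely in establishing statements of this kind that the published proof does its real work. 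As it stands there is a genuine gap at the heart of the argument.
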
 

	We recall the following theorem which characterizes the path-width of trees and is used for computing their path-width in linear time.

\begin{theorem}[Ellis, Sudborough, and Turner~\cite{EllisST94}; Takahashi, Ueno, and Kajitani~\cite{TakahashiUK94}]\label{thm:maintreeforpathwidth}
Let $T$ be a tree and let $k$ be a positive integer. The following are equivalent.
\begin{enumerate}[(1)]
\item $T$ has path-width at most $k$.
\item For every node $x$ of $T$, at most two of the subtrees of $T- x$ have path-width $k$ and all other subtrees of $T- x$ have path-width at most $k-1$.
\item $T$ has a path $P$ such that for each node $v$ of $P$ and each connected component $T'$ of $T- v$ not containing a node of $P$, $\pw(T')\le k-1$.
\end{enumerate}
\end{theorem}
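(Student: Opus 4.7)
The plan is to prove the three-way equivalence by the cyclic implications $(3) \Rightarrow (1) \Rightarrow (2) \Rightarrow (3)$. For $(3) \Rightarrow (1)$, given a path $P = v_1 v_2 \cdots v_m$ satisfying the hypothesis, I construct a path decomposition of $T$ of width at most $k$ as follows. For each component $T'$ of $T \setminus V(P)$ and its unique neighbour $v_i \in V(P)$, take a path decomposition of $T'$ of width at most $k-1$ (which exists by hypothesis) and adjoin $v_i$ to every bag; the result is a width-$k$ decomposition of $T[V(T') \cup \{v_i\}]$ in which $v_i$ lies in every bag. Concatenate these enlarged decompositions of all branches at $v_i$, separating successive blocks by the singleton bag $\{v_i\}$, then concatenate the resulting local blocks along $v_1, v_2, \ldots, v_m$ using transitional bags $\{v_i, v_{i+1}\}$. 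A direct check of the three path-decomposition axioms confirms width at most $k$.

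For $(1) \Rightarrow (2)$, let $(Q, \cB)$ be a path decomposition of $T$ of width at most $k$, and suppose toward contradiction that some vertex $x$ has three subtrees $T_1, T_2, T_3$ of $T \setminus x$ with $\pw(T_i) = k$. The restriction of $(Q, \cB)$ to $V(T_i)$ is a path decomposition of $T_i$ of width at most $k$, so $\pw(T_i) = k$ supplies a bag $B_{t_i}$ with $\abs{B_{t_i} \cap V(T_i)} \ge k+1$; combined with the bound $\abs{B_{t_i}} \le k+1$, this forces $B_{t_i} \subseteq V(T_i)$, hence $x \notin B_{t_i}$. Let $[B_L, B_R]$ denote the maximal subpath of $Q$ whose bags contain $x$; then $t_1, t_2, t_3$ all lie outside this interval, and by the pigeonhole principle two of them, after relabelling $t_1 < t_2$, lie on the same side, say to the left of $B_L$. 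The bag-subpath of $T_1$ contains $t_1$ and meets $[B_L, B_R]$ (the edge from $x$ to its neighbour in $T_1$ forces a common bag), so by connectedness it contains $t_2$; therefore some vertex of $T_1$ appears in $B_{t_2} \subseteq V(T_2)$, contradicting $V(T_1) \cap V(T_2) = \emptyset$.

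For $(2) \Rightarrow (3)$, I construct $P$ by a greedy walk on ``heavy directions''. If no subtree of any $T\setminus v$ has path-width exactly $k$, then $T$ itself has path-width at most $k-1$ and a single vertex serves as $P$. Otherwise, select a starting vertex $v_0$ such that $T\setminus v_0$ has at most one heavy subtree; the existence of such a $v_0$ is obtained from condition (2) by a short auxiliary argument (take, for instance, a vertex furthest from a fixed heavy subtree among those admitting heavy subtrees). Move from $v_0$ into its unique heavy subtree to produce $v_1$. Inductively, at each subsequent vertex $v_i$ the component of $T \setminus v_i$ containing $v_{i-1}$ uses up at most one of the at most two heavy subtrees allowed by (2), so at most one forward heavy subtree remains, determining the unique continuation of the walk. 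The walk terminates at some $v_m$ with no remaining forward heavy subtree; by construction every branch off the resulting path $P$ is non-heavy and thus of path-width at most $k-1$, verifying (3).

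The main obstacle is the bookkeeping in $(1) \Rightarrow (2)$, where one has to carefully track how the three bag-subpaths of the heavy subtrees interact with the interval $[B_L, B_R]$ to produce the required disjointness contradiction; getting the side-analysis and the ``crossing'' step right is the key hurdle. A secondary subtlety lies in $(2) \Rightarrow (3)$: justifying the existence of the extremal starting vertex and that the walk never needs to branch requires an auxiliary case analysis on how heavy subtrees at adjacent vertices of $T$ relate, which follows directly from condition (2) but must be spelled out.
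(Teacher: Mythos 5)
The paper does not actually prove this statement; it is quoted from Ellis--Sudborough--Turner and Takahashi--Ueno--Kajitani, so your attempt can only be judged on its own merits. Your $(3)\Rightarrow(1)$ construction and your $(1)\Rightarrow(2)$ argument (restricting an optimal decomposition to each heavy subtree, forcing a full bag $B_{t_i}\subseteq V(T_i)$, and running the interval/pigeonhole argument against the interval of bags containing $x$) are correct; the only cosmetic omission there is that the ``all other subtrees have path-width at most $k-1$'' half of (2) should be dispatched by subgraph-monotonicity of path-width, which you use implicitly anyway.

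The genuine gap is in $(2)\Rightarrow(3)$. The inductive claim ``the component of $T\setminus v_i$ containing $v_{i-1}$ uses up at most one of the at most two heavy subtrees allowed by (2), so at most one forward heavy subtree remains'' is a non sequitur: if the backward component is \emph{not} heavy, two forward heavy components may remain, and whichever one the walk enters, the other ends up as a heavy branch off the final path, violating (3). Your stated starting rule does not prevent this. Concretely, take $k=1$ and let $T$ be the tree with center $z$, a pendant leaf $c$ at $z$, and two legs $z a_1 a_2$ and $z b_1 b_2$. Condition (2) holds ($z$ has exactly two components of path-width $1$, every other vertex has at most one), and $c$ has exactly one heavy subtree, so $c$ is an admissible start under your criterion; but after the step $c\to z$ the backward component $\{c\}$ has path-width $0$ while both $\{a_1,a_2\}$ and $\{b_1,b_2\}$ are heavy, and the resulting path (e.g.\ $c,z,a_1$) leaves $\{b_1,b_2\}$, of path-width $1>k-1$, hanging off $z$. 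So the argument needs an extremal choice of starting vertex (your parenthetical ``furthest from a fixed heavy subtree'' gestures at this: starting at $b_2$ works in the example) together with a proved invariant such as: the vertices $v$ for which $T\setminus v$ has two heavy components form a path that $P$ must traverse, or equivalently that along the constructed walk at most one component avoiding $v_{i-1}$ is ever heavy. None of this is established in your writeup, and as stated the inductive step is false. A secondary inaccuracy: in your degenerate case the assertion ``$T$ itself has path-width at most $k-1$'' is wrong (take $T=K_2$, $k=1$), although the conclusion you actually need there --- that a single vertex serves as $P$ --- is still true.
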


\section{Linear rank-width of distance-hereditary graphs}\label{sec:splitdecs} 

	In this section, we recall the characterization of the linear rank-width of distance-hereditary graphs investigated by Adler and the authors of this paper~\cite{AdlerKK15}.  
	For this characterization, we need to introduce split decompositions and the new notion of limbs introduced in the previous paper.
	We will follow the definition for split decompositions used by Bouchet~\cite{Bouchet88}. 

	A \emph{split} in a connected graph $G$ is a vertex partition $(X,Y)$ of $G$ such that $|X|,|Y|\geq 2$ and $\cutrk_G(X)=1$. 
	\emph{Prime graphs} are connected graphs that do not have a split. 
	Note that every connected graph with at most $3$ vertices is a prime graph, by definition.
	Also, one can observe that every connected graph on $4$ vertices admits a split, and it is not a prime graph.

	A \emph{marked graph} is a connected graph $D$ with a matching $M(D)$ 
	where every edge in $M(D)$ is a cut-edge. 
	Every edge in $M(D)$ is called a marked edge, and 
	the end vertices of marked edges are called \emph{marked vertices}. 
	The connected components of $D-M(D)$ are called \emph{bags} of $D$. 
	The edges in $E(D)\setminus M(D)$ are called \emph{unmarked edges}, 
	and the vertices that are not marked are called \emph{unmarked vertices}. 

	If $(X,Y)$ is a split in a marked graph $G$, then we construct a new marked graph $D$ such that 
	\begin{itemize}
	\item $V(D)=V(G) \cup \{x',y'\}$ for two distinct new vertices $x',y'\notin V(G)$,  
	\item $E(D)=E(G[X]) \cup E(G[Y]) \cup \{x'y'\} \cup E'$ where
	\begin{align*}
	E' &:= \{x'x: x\in X\ \textrm{and there exists $y\in Y$ such that $xy\in E(G)$}\} \cup\\ & 
	\qquad \{y'y : y\in Y\ \textrm{and there exists $x\in X$ such 	that $xy\in E(G)$}\},
	\end{align*}
	\item $x'y'$ is a marked edge, and all edges in $E'$ are unmarked edges.
	\end{itemize}
	The marked graph $D$ is called a \emph{simple decomposition of} $G$.
	See Figure~\ref{fig:simple} for an example.

\begin{figure}
\centerline{
\includegraphics[scale=0.55]{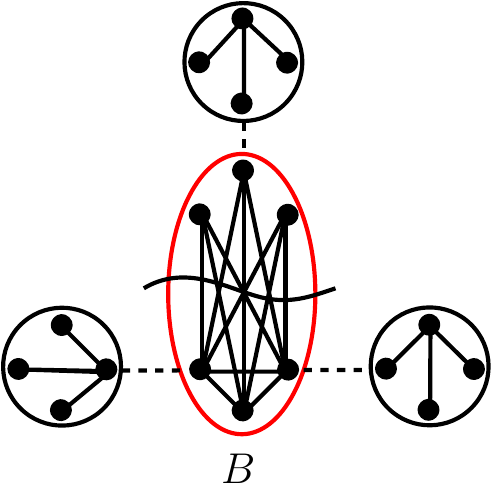} \quad\quad
\includegraphics[scale=0.55]{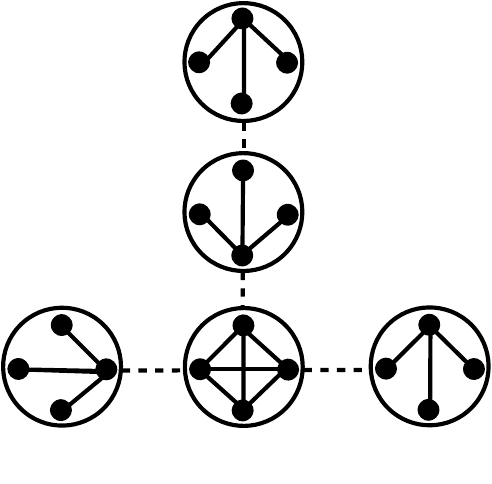} }
\caption{An example of replacing a bag $B$ with its simple decomposition. Circles indicate bags and dotted edges indicate marked edges. }
\label{fig:simple}
\end{figure}

	A \emph{split decomposition} of a connected graph $G$ is a marked graph $D$ defined inductively to be either $G$ or a marked graph
	defined from a split decomposition $D'$ of $G$ by replacing a bag with its simple decomposition. 
	For a marked edge $xy$ of a marked graph $D$, the \emph{recomposition
  of $D$ along $xy$} is the marked graph $(D\wedge xy) -\{x,y\}$.  For a split decomposition $D$, let $\origin{D}$ denote the graph obtained from $D$ by recomposing all marked
edges. Note that if $D$ is a split decomposition of $G$, then $\origin{D}=G$.  

	Since each marked edge of a split decomposition $D$ is a cut-edge and all marked edges form a matching, 
	if we contract all unmarked edges in $D$, then we obtain a tree. 
	We call it the \emph{decomposition tree of $G$ associated with $D$} and denote it by $T_D$.  
	To distinguish the vertices of $T_D$ from the vertices of $G$ or $D$, 
	the vertices of $T_D$ will be called \emph{nodes}. 
   For a node $v$ of $T_D$, we write $\bag{D}{v}$ to
   denote the bag of $D$ with which it is in correspondence, 
   and for a bag $B$ of $D$, we write $\node{D}{B}$ to 
   denote the node of $T_D$ with which it is in correspondence.
	Two bags of $D$ are
	called \emph{adjacent bags} if their corresponding nodes in $T_D$ are adjacent.
	A sequence of bags $B_1-B_2- \cdots - B_m$ is called a path of bags if for each $i\in \{1, 2, \ldots, m-1\}$, 
	$B_i$ and $B_{i+1}$ are adjacent bags, and all of $B_1, B_2, \ldots, B_m$ are pairwise distinct.
	Clearly, for two bags $B$ and $B'$, there is a unique path of bags from $B$ to $B'$, 
	which corresponds to the path from $\node{D}{B}$ to $\node{D}{B'}$ in $T_D$.
	We denote by $\dist_D(B, B')$ the distance from $\node{D}{B}$ to $\node{D}{B'}$ in $T_D$; in other words, it is one less than the number of 
	bags in the unique path of bags from $B$ to $B'$ in $D$.
	
	\subsection{Canonical split decompositions and local complementations}\label{subsec:canonical}
	A split decomposition is called \emph{canonical} if each bag is either a prime graph, a star, or a complete graph, and 
	every recomposition of a marked edge in D results in a split decomposition without the same property.  
	The following is due to Cunningham and Edmonds \cite{CunninghamE80}, and Dahlhaus \cite{Dahlhaus00}.

	\begin{theorem}[Cunningham and Edmonds~\cite{CunninghamE80}; Dahlhaus~\cite{Dahlhaus00}] \label{thm:CED} Every connected graph $G$ has a unique canonical split decomposition, up to 	isomorphism, and it can be computed in time $\cO(|V(G)|+|E(G)|)$.
\end{theorem}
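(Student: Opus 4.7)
My plan is to split the proof into three parts: existence, uniqueness, and the linear-time algorithm, with uniqueness being the most subtle. First I would handle existence by strong induction on $|V(G)|$. Starting from any split $(X, Y)$ (which exists when $G$ is not prime), I form the simple decomposition, recurse on each of the two bags to produce canonical decompositions of them, and glue along the marked edge. The resulting split decomposition has all bags prime, star, or complete, but may fail to be canonical if two adjacent bags can be combined into a larger bag of the same type. A straightforward normalization step---repeatedly recomposing along a marked edge joining two complete bags, or two stars in the appropriate center-to-center or leaf-to-center configuration---removes these redundancies, and the process terminates because the total number of bags strictly decreases, leaving a canonical decomposition.

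The uniqueness proof is where the real work lies. My approach is to characterize the tree structure and the bag contents intrinsically in terms of $G$, without reference to a particular decomposition. I would call a split $(X, Y)$ \emph{strong} if it does not cross any other split of $G$, meaning that for every split $(X', Y')$, at least one of $X \cap X'$, $X \cap Y'$, $Y \cap X'$, $Y \cap Y'$ is empty. Strong splits form a laminar family on $V(G)$ and thus naturally induce a tree. For the non-strong splits, one shows that they group into maximal ``regions'' of mutually crossing splits, each of which has a single type: \emph{series} (forcing a complete bag) or \emph{parallel} (forcing a star bag). The key lemma is that two crossing splits must be of the same type, so each region is uniformly series or parallel. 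This forces any canonical decomposition of $G$ to realize exactly the tree of strong splits, with series regions filled by complete bags and parallel regions filled by star bags; hence any two canonical decompositions of $G$ are isomorphic.

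For the linear-time bound, I would invoke Dahlhaus's incremental approach: fix an arbitrary ordering $v_1, \ldots, v_n$ of the vertices and maintain the canonical decomposition of $G[\{v_1, \ldots, v_i\}]$ as $i$ grows. When inserting $v_{i+1}$, only a connected subtree of affected bags---those whose unmarked vertices meet $N(v_{i+1}) \cap \{v_1, \ldots, v_i\}$ non-trivially---needs to be reorganized, and with suitably chosen data structures (tracking which bags are hit by the incoming neighborhood and re-normalizing only along the boundary) the amortized cost per insertion is $\cO(\deg(v_{i+1}) + 1)$, summing to $\cO(|V(G)| + |E(G)|)$ overall.

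The main obstacle is the crossing analysis inside the uniqueness argument: showing that a maximal set of mutually crossing splits is uniformly of one type requires a careful case analysis that propagates the bipartite structure forced by one crossing pair to every other crossing pair in the region, using that any two crossings determine the local neighborhood pattern. This is exactly the content of Cunningham's original structural theorem, and I would expect to carry it out by studying how three pairwise crossing splits partition $V(G)$ into eight classes and checking that only the series and parallel patterns survive. Once this is established, the rest of the uniqueness follows by matching bags of the two decompositions region by region via the induced partition of $V(G)$.
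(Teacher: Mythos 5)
The paper does not prove this statement at all: Theorem~\ref{thm:CED} is imported verbatim from Cunningham--Edmonds and Dahlhaus, so there is no in-paper argument to compare against, and your outline is essentially a reconstruction of exactly those cited proofs (existence by recursive splitting plus normalization, uniqueness via Cunningham's analysis of crossing splits and the laminar family of strong splits, linear time via Dahlhaus's incremental algorithm). In that sense your route is the ``same'' as the paper's, and deferring the two genuinely hard ingredients --- the uniformity of type within a maximal class of mutually crossing splits, and the amortized analysis with the supporting data structures in Dahlhaus's algorithm --- to the original sources is no worse than what the paper itself does.

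One concrete correction, though: your normalization step says to recompose marked edges joining two stars ``in the appropriate center-to-center or leaf-to-center configuration.'' By Theorem~\ref{thm:can-forbid}, the only marked-edge types that obstruct canonicity are $KK$ and $S_pS_c$ (leaf of one star to center of the other). A center-to-center edge ($S_cS_c$) must \emph{not} be recomposed: doing so merges the two stars into a complete bipartite bag, which is neither prime, star, nor complete, so that step as literally written would destroy the invariant your induction maintains rather than clean up a redundancy. The same goes for $S_pS_p$. With the recomposition restricted to $KK$ and $S_pS_c$ edges, your existence/termination argument (bag count strictly decreases) is fine; the uniqueness and linear-time parts remain sketches whose substance lives in the cited works, which is acceptable here but should not be mistaken for an independent proof.
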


	A bag is called a \emph{prime bag} if it is a prime graph on at least $5$ vertices, and a bag is called a complete bag or a star bag if it is a complete graph or a star, respectively. 

	Let $D$ be a split decomposition of a connected graph $G$ with bags that are either a prime graph, a complete graph or a star.
	The \emph{type of a bag} of $D$ is either $P$, $K$, or $S$ depending on whether it is a prime graph, a complete graph, or a star, respectively. 
	The \emph{type of a marked edge} $uv$  is $AB$ where $A$ and $B$ are the types of the bags containing $u$ and $v$ respectively. If $A=S$ or $B=S$, then we can replace
$S$ by $S_p$ or $S_c$ depending on whether the end of the marked edge is a leaf or the center of the star, respectively. 
Bouchet characterized when it becomes a canonical split decomposition.

\begin{theorem}[Bouchet~\cite{Bouchet88}]\label{thm:can-forbid} Let $D$ be a split decomposition of a connected graph whose bags are either a prime graph, a complete graph, or a star. Then $D$ is a canonical split decomposition if and only if it has no marked edge of type $KK$ or   $S_pS_c$.
\end{theorem}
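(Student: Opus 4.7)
The plan is to reformulate Theorem~\ref{thm:can-forbid} as a statement about single recompositions. A refinement of a split decomposition is obtained by iteratively replacing bags with simple decompositions, so its reverse process --- recomposition along individual marked edges --- can also be iterated, and $D$ fails to be canonical precisely when some single recomposition of a marked edge produces a coarser split decomposition whose bags are still all of type $P$, $K$, or $S$. It therefore suffices to show that recomposing along a marked edge $uv$ preserves the $P/K/S$ property if and only if the type of $uv$ is $KK$ or $S_pS_c$.

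For the sufficiency direction I compute the pivot $D\wedge uv$ directly. Because $uv$ is a cut-edge, the non-marked neighbors of $u$ (respectively of $v$) lie entirely inside the bag $B_u$ (respectively $B_v$); hence $W_1 = N(u)\cap N(v) = \emptyset$, $W_2 = B_u\setminus\{u\}$, and $W_3 = B_v\setminus\{v\}$. Pivoting complements all edges between $W_2$ and $W_3$, after which $u$ and $v$ are deleted, leaving one bag on $(B_u\cup B_v)\setminus\{u,v\}$. In the $KK$ case $W_2$ and $W_3$ are cliques and the added complete bipartite edges fuse them into a single clique, so the merged bag is complete. In the $S_pS_c$ case $B_u$ is a star with center $c\ne u$ and $B_v$ is a star with center $v$, so $W_2=\{c\}$ while $W_3$ is the set of leaves of $B_v$; after pivoting, $c$ becomes adjacent to every leaf of $B_v$ while retaining adjacency to the original leaves of $B_u$, and the merged bag is the star centered at $c$.

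For the necessity direction I would run through the remaining types $KS_p$, $KS_c$, $S_pS_p$, $S_cS_c$, and the four types containing a $P$, exhibiting in each case a witness that the merged bag is neither prime nor complete nor a star. Representative examples: for $S_cS_c$ the merged bag is a complete bipartite graph on $W_2\cup W_3$, which admits the obvious split and is neither complete nor a star once both sides have at least two vertices; for $S_pS_p$ it is a double-star whose two centers become adjacent, split by separating the two sub-stars; for $KS_p$ it is a clique on $W_2$ together with the star-center $c$ made universal to $W_2$ and the remaining leaves of $B_v$ attached only to $c$, which has the split $(W_2,\{c\}\cup(B_v\setminus\{v,c\}))$; and for every type involving $P$ the prime bag survives as an induced subgraph with only a few extra incident edges, simultaneously forbidding completeness, star-ness, and primeness of the merged bag.

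The main obstacle is bookkeeping for degenerate small bags, where types may overlap (for instance $K_2$ is simultaneously a complete graph and a star) or one of $W_2$, $W_3$ could have size one. These edge cases are controlled by the standing conventions built into the definition of canonical decompositions (in particular the uniqueness of type labels), and once they are dispatched the case analysis above, combined with Theorem~\ref{thm:CED}, yields the equivalence stated in the theorem.
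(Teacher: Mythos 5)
The paper does not actually prove this statement (it is quoted from Bouchet), so your proposal stands on its own; unfortunately it has a genuine gap at its very first step. Your whole argument rests on the reformulation that ``$D$ fails to be canonical precisely when some \emph{single} recomposition of a marked edge produces a coarser split decomposition whose bags are still all of type $P$, $K$, or $S$.'' One direction of this is trivial, but the direction you actually need --- if $D$ is a refinement of \emph{some} coarser decomposition $D'$ with all bags of type $P/K/S$, then already one single recomposition of $D$ has all bags of type $P/K/S$ --- is not justified by ``the reverse process can be iterated.'' Iterability only says $D'$ is reached from $D$ by a sequence of recompositions; it does not say any one of them, performed alone, lands on an all-$P/K/S$ decomposition, and your own case analysis shows that intermediate merged bags can easily fail to be prime, complete, or a star. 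So the non-canonicity of $D$ could a priori be witnessed only by recomposing several edges at once, and your analysis of single recompositions of the non-$KK$, non-$S_pS_c$ types says nothing about that situation. Concretely, what is missing is the structural lemma that carries the real content of the converse: a bag $B'$ of $D'$ that is nontrivially decomposed in $D$ cannot be prime (it has a split), hence is complete or a star, and every split decomposition of a complete graph into $P/K/S$ bags consists of complete bags joined by $KK$ marked edges, while every split decomposition of a star consists of star bags joined by $S_pS_c$ marked edges (with the centers oriented consistently). Only with this lemma does non-canonicity force a forbidden edge to be present in $D$ itself; proving it (say by induction on the number of bags refining $B'$, or via Lemma~\ref{lem:represent} to rule out induced $C_4$'s and non-edges that a wrongly typed marked edge would create in the recomposed graph) is the heart of the theorem, and neither your case analysis nor an appeal to Theorem~\ref{thm:CED} substitutes for it.

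The parts you did carry out are fine: the computation that recomposing a $KK$ edge yields a complete bag and recomposing an $S_pS_c$ edge yields a star (note only that in general $W_2=N(u)\setminus\{v\}$ need not be all of $B_u\setminus\{u\}$, as your own $S_pS_c$ case shows), which correctly gives the easy implication that a decomposition containing such an edge is not canonical; and the observation that for every other type the merged bag is neither complete, nor a star, nor prime (it always has the split separating the two old bags). If you add the structural lemma above, your plan closes into a correct proof; without it, the equivalence you reduce to is exactly as hard as the theorem itself.
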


 We will use the following characterizations of trees and of distance-hereditary graphs.

\begin{theorem}[Bouchet~\cite{Bouchet88}]\label{thm:Bouchet88}\hfill \begin{enumerate} [(1)]
\item A connected graph is distance-hereditary if and only if every bag of its canonical split decomposition is of type K or S.  
\item A connected graph is a tree if and only if every bag of its canonical split decomposition is a star bag whose center is an unmarked vertex.
\end{enumerate}
\end{theorem}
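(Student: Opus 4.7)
Both parts will be proved by induction, exploiting the inductive constructions of their respective graph classes. Recall that a graph is distance-hereditary if and only if it is built from a single vertex by successive additions of pendants and twins \cite{HammerM90}, and that a tree is built from $K_1$ by pendant additions only.

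For Part~1, the ``$\Rightarrow$'' direction proceeds by induction on $\abs{V(G)}$. Write $G=G'+v$ where $v$ is a pendant or a twin in $G$; by induction the canonical decomposition $D'$ of $G'$ has only bags of type K or S. I will describe explicitly how to transform $D'$ into the canonical decomposition $D$ of $G$: adding a pendant to some $u\in V(G')$ either inserts a new two-vertex star bag attached to $u$'s bag, or enlarges $u$'s bag if $u$ is unmarked and its bag has the right type; adding a twin inserts $v$ into the bag of its twin partner, possibly splitting off a new bag when the bag type and twin type (true vs.\ false) disagree. After insertion, one applies the cleanup dictated by Theorem~\ref{thm:can-forbid}: merge adjacent $KK$ pairs into a single K-bag and adjacent $S_pS_c$ pairs into a single S-bag. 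Every such merge preserves the K/S property, so $D$ also has only K/S bags. For ``$\Leftarrow$'', induct on the number of bags of $D$. The base case gives $G$ equal to a complete graph or a star, both distance-hereditary. For the step, pick a leaf bag $B$ of $T_D$, which has exactly one marked vertex. Case analysis on the type of $B$ exhibits a twin pair or pendant of $G$ among the unmarked vertices of $B$: two unmarked vertices of a K-bag are true twins; two unmarked leaves of a star with unmarked center are false twins; any unmarked leaf of a star with marked center is a pendant in $G$. Remove such a vertex $v$, apply cleanup, and apply the inductive hypothesis to conclude that $G\setminus v$, and hence $G=G'+v$, is distance-hereditary.

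For Part~2, Part~1 already forces every bag to be K or S. For ``$\Rightarrow$'', I must rule out K-bags and star bags with marked centers. A K-bag $B$ with $\ge 3$ vertices provides three pairwise adjacent vertices in $B$; each marked vertex of $B$ corresponds, after recomposition, to a non-empty set of vertices of $G$ on the other side of its split, and the K-bag structure forces a triangle among representatives in $G$, contradicting that $G$ is a tree. Similarly, a star bag $B$ with marked center $c$ must, by canonicity (Theorem~\ref{thm:can-forbid}), be joined through a marked edge of type $S_cS_c$ to a star bag with marked center $c'$; the cross-edges of the corresponding split give a complete bipartite adjacency between the leaves of $B$ and the leaves of the adjacent bag, producing a $K_{2,2}$ and hence a 4-cycle in $G$. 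Hence all bags are S-bags with unmarked centers, i.e.\ $D$ is an $\cS$-decomposition. For ``$\Leftarrow$'', in an $\cS$-decomposition every marked vertex is a leaf of its star bag, so every marked edge is of type $S_pS_p$. Recomposing a single marked edge $mm'$ between bags with centers $c,c'$ replaces $\{m,m'\}$ by a new edge $cc'$ (this is the only cross-edge produced by the split); iterating, $\origin{D}$ has as vertices the centers and unmarked leaves, its edges are those of the star bags together with one edge between each pair of centers of adjacent bags. This incidence structure is exactly $T_D$ with unmarked leaves hung as pendants, i.e.\ a tree.

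The main obstacle is the case analysis in Part~1 ``$\Rightarrow$'' of how $D'$ transforms into $D$ when adding a pendant or twin: sub-cases depend on whether the attachment point is unmarked or marked, on the type (K, $S_c$, $S_p$) of the bag containing it, and on whether the added vertex is a true or false twin; each sub-case may trigger several cleanup merges to restore canonicity. Verifying in every sub-case that the cleanup only performs $KK\to K$ and $S_pS_c\to S$ contractions, and hence preserves the invariant that all bags are of type K or S, is the technical heart of the argument.
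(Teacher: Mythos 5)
The paper itself gives no proof of this theorem; it is imported verbatim from Bouchet's work (and the pendant/twin characterization of distance-hereditary graphs from Hammer--Maffray), so there is nothing internal to compare against. Your overall route -- induction along the pendant/twin construction for Part~1, and bag-local arguments for Part~2 using Lemma~\ref{lem:represent} together with Theorem~\ref{thm:can-forbid} -- is the standard one and is workable. Part~2 as you sketch it is essentially correct: once prime bags (Part~1) and $K$-bags (three pairwise adjacent bag vertices yield, via representatives, a triangle in $G$) are excluded, a marked star center can only sit on an $S_cS_c$ edge, whose two bags produce a $K_{2,2}$ and hence a $C_4$; and recomposing the $S_pS_p$ edges of an $\cS$-decomposition just adds one center--center edge per marked edge, so $\origin{D}$ is $T_D$ with the unmarked leaves hung as pendants, a tree.

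There are, however, two concrete slips in the ``$\Leftarrow$'' direction of Part~1. First, the claim that \emph{any unmarked leaf of a star bag with marked center is a pendant in $G$} is false: such a leaf is adjacent in $G$ to every unmarked vertex linked through the marked center. For example, the canonical decomposition of $K_4$ minus an edge consists of a triangle bag joined to a star bag whose center is marked; the two unmarked leaves of that star have degree $2$ in the graph. The correct (and sufficient) statement is that two unmarked leaves of such a bag are \emph{false twins}; since a leaf bag has exactly one marked vertex and at least three vertices, a twin pair always exists there, so the case split should be: $K$-bag gives true twins, star with unmarked center gives pendants attached to the center, star with marked center gives false twins. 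Second, your induction is on the number of bags, but deleting one unmarked vertex from a leaf bag with four or more vertices does not decrease the number of bags; induct on $\abs{V(G)}$ instead (the cleanup merges preserve the $K$/$S$ property and the vertex count strictly drops). Finally, in the ``$\Rightarrow$'' direction of Part~1 you only announce the case analysis; to certify that the decomposition you construct really is \emph{the} canonical decomposition you must check that after cleanup no $KK$ or $S_pS_c$ marked edge remains and then invoke Theorem~\ref{thm:can-forbid} together with the uniqueness statement of Theorem~\ref{thm:CED}. As you yourself note, that case analysis is the technical heart of the argument, and it is not yet carried out.
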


	We now relate the split decompositions of a graph and the ones of its locally equivalent graphs. 
	Let $D$ be a split decomposition of a connected graph.  A vertex $v$ of $D$ \emph{represents} an unmarked vertex $x$ (or is a \emph{representative} of $x$) 
	if either $v=x$ or there is a path of even length from $v$ to $x$ in $D$ starting with a marked edge such that marked edges and unmarked edges 
	appear alternately in the path.  
	Two unmarked vertices $x$ and $y$ are \emph{linked} in $D$ if there is a path from $x$ to $y$ in $D$ 
	such that unmarked edges and marked edges appear alternately in the path.
	Linkedness of unmarked vertices exactly represents the adjacency relation between those vertices in the original graph.
	 
\begin{lemma}[Adler, Kant\'e, and Kwon~\cite{AdlerKK15}] \label{lem:represent} Let $D$ be a split decomposition of a connected graph $G$.  Let $v'$ and $w'$ be two vertices in a same bag of $D$, and let $v$ and $w$ be two
  unmarked vertices of $D$ represented by $v'$ and $w'$, respectively.  The following are equivalent.
 \begin{enumerate}
 \item $v$ and $w$ are linked in $D$.
 \item $vw\in E(G)$.
 \item $v'w' \in E(D)$.
 \end{enumerate}
\end{lemma}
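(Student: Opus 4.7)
The plan is to establish $(1)\Leftrightarrow(2)$ and $(1)\Leftrightarrow(3)$ separately. The first equivalence connects linking in $D$ to adjacency in the underlying graph $\origin{D}$ via the recomposition operation, while the second connects linking to local adjacency within a bag via the tree structure of the decomposition tree $T_D$.

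For $(1)\Leftrightarrow(2)$ we induct on $\abs{M(D)}$. In the base case $\abs{M(D)}=0$ we have $D=G=\origin{D}$, and any alternating path of length $\geq 2$ would require a marked edge, so linking collapses to adjacency in $\origin{D}$. For the inductive step, pick a marked edge $xy$ of $D$ and form $D':=(D\wedge xy)\setminus\{x,y\}$, which satisfies $\origin{D'}=\origin{D}$ and has one fewer marked edge. Because $xy$ is a cut-edge, the sets $N_D(x)\setminus\{y\}$ and $N_D(y)\setminus\{x\}$ are disjoint with no edges between them, so pivoting on $xy$ adds precisely the complete bipartite graph between them as new unmarked edges while leaving all other incidences untouched. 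Any $D$-alternating path between unmarked vertices $v,w\notin\{x,y\}$ either avoids $\{x,y\}$ (and descends verbatim to $D'$) or contains a segment $a-x-y-b$ with $ax, yb$ unmarked, which collapses in $D'$ to the new unmarked edge $ab$ while preserving alternation (the edges of the path neighboring the segment are marked). This yields a bijection on the relevant alternating paths, so $v$ and $w$ are linked in $D$ iff they are linked in $D'$, and the inductive hypothesis closes the equivalence.

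For $(3)\Rightarrow(1)$, since $v'$ and $w'$ lie in a common bag, the edge $v'w'$ is unmarked. Let $P_v$ be the even-length alternating path from $v'$ to $v$ starting with a marked edge guaranteed by the representative definition (take $P_v$ empty if $v=v'$), and define $P_w$ analogously. Concatenating the reverse of $P_v$, the edge $v'w'$, and $P_w$ produces a $v$-to-$w$ path whose first and last edges are unmarked, and alternation at the junctions $v',w'$ holds because the adjacent edges of $P_v, P_w$ at these vertices are marked while $v'w'$ is unmarked.

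For $(1)\Rightarrow(3)$, the plan is to use the tree structure of $T_D$. Take a simple alternating $v$-to-$w$ path $P$; since unmarked vertices have no incident marked edge, $P$ begins and ends with unmarked edges, and its bag-sequence is a walk in $T_D$ that moves to an adjacent bag precisely when $P$ traverses a marked edge. Because $T_D$ is a tree and $P$ is simple, this walk is the unique simple path from $B_v$ to $B_w$, and alternation forces $P$ to use exactly one unmarked edge inside each traversed bag: between the two marked vertices where $P$ enters and exits at an intermediate bag, or between $v$ (resp.\ $w$) and the unique exit (resp.\ entry) marked vertex at $B_v$ (resp.\ $B_w$). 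These two vertices are precisely the unique representatives of $v$ and $w$ in that bag. To conclude, observe that the common bag $B$ of $v'$ and $w'$ must lie on the $B_v$-to-$B_w$ tree path: since $v'\neq w'$ and marked edges form a matching, the marked edges at $v'$ and $w'$ leave $B$ in different directions, making $B$ a branching point of the tree paths $B\to B_v$ and $B\to B_w$, while the boundary cases $v=v'$ or $w=w'$ simply force $B=B_v$ or $B=B_w$. In every case $v'w'$ is the internal unmarked edge of $P$ inside $B$, so $v'w'\in E(D)$. The main technical subtlety will be justifying that a linked alternating walk can be taken simple and carefully handling the endpoint bags, both following from the matching property of marked edges and the acyclicity of $T_D$.
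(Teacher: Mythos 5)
The paper itself contains no proof of this lemma---it is imported verbatim from the companion paper \cite{AdlerKK15}---so there is no in-paper argument to compare against; your proposal must stand on its own, and in substance it does. The three-part structure is sound: the induction on $\abs{M(D)}$ via a single recomposition correctly handles $(1)\Leftrightarrow(2)$, the concatenation gives $(3)\Rightarrow(1)$, and the bag-walk analysis gives $(1)\Rightarrow(3)$. Three points still need to be made explicit. First, the claimed ``bijection'' in the recomposition step: for ``linked in $D'$ implies linked in $D$'' you must rule out an alternating path of $D'$ that uses two of the newly created unmarked edges (they could not both be expanded through $x,y$); this follows from the observation, which you only formulate later in the $(1)\Rightarrow(3)$ part, that a simple alternating path visits each bag in one contiguous stretch and hence uses at most one unmarked edge per bag, and all new edges lie in the single merged bag. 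Second, in $(3)\Rightarrow(1)$ the concatenated walk is simple because $P_v\setminus\{v'\}$ and $P_w\setminus\{w'\}$ lie in the disjoint components of $D$ hanging off the bag $B$ at the distinct marked edges of $v'$ and $w'$. Third, ``the unique representatives of $v$ and $w$ in that bag'' carries real weight: uniqueness is exactly what identifies the entry and exit vertices of $P$ in $B$ with $v'$ and $w'$, and it deserves its own sentence (two distinct vertices of a bag have their marked edges leading into disjoint subdecompositions, so they cannot represent the same unmarked vertex). All three points follow from the matching and cut-edge properties you already invoke, so these are gaps of detail rather than of idea.
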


	A \emph{local complementation} at an unmarked vertex $x$ in a split decomposition $D$, denoted by $D*x$, 
	is the operation to replace each bag $B$ containing a representative $w$ of $x$
	with $B*w$. Bouchet observed that $D*x$ is a split decomposition of $\origin{D}*x$, and $M(D) = M(D*x)$.
	Two split decompositions $D$ and $D'$ are \emph{locally equivalent}
	if $D$ can be obtained from $D'$ by applying a sequence of local complementations at unmarked vertices.
	As expected, this local complementation also preserves the property that the split decomposition is canonical.
	
	\begin{figure}
\centerline{
\includegraphics[scale=0.45]{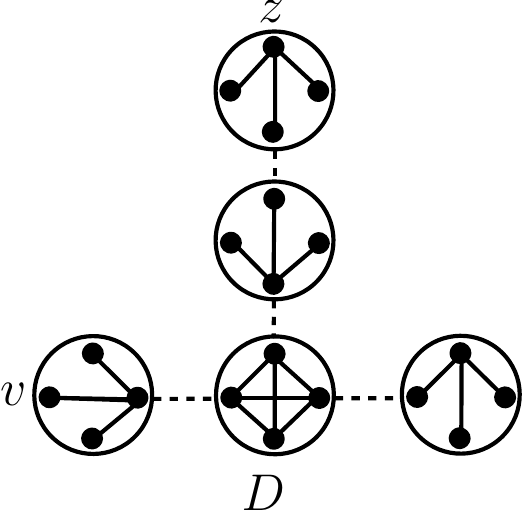} \quad\quad
\includegraphics[scale=0.45]{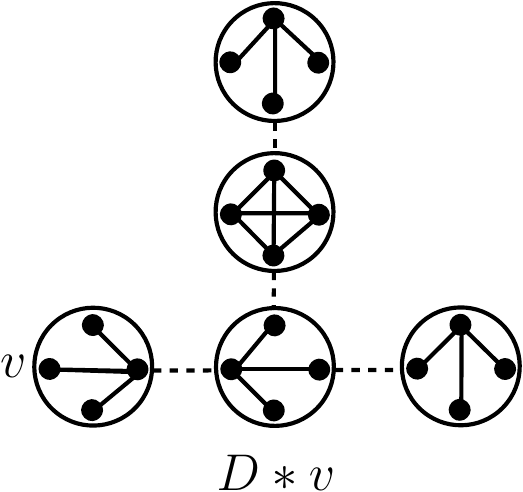} \quad\quad
\includegraphics[scale=0.45]{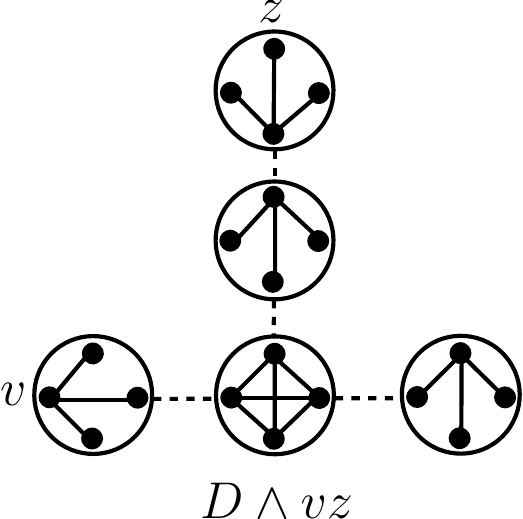}}
\caption{Examples of local complementation and pivoting in a split decomposition. }
\label{fig:local}
\end{figure}
	
\begin{lemma}[Bouchet~\cite{Bouchet88}]\label{lem:localdecom} 
	Let $D$ be the canonical split decomposition of a connected graph $G$. 
	If $x$ is a vertex of $G$, then $D*x$ is the canonical split decomposition of $G*x$.
\end{lemma}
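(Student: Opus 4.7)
The plan is to invoke Theorem~\ref{thm:can-forbid}, which states that a split decomposition whose bags are all prime, stars, or complete graphs is canonical if and only if it contains no marked edge of type $KK$ or $S_pS_c$. The text immediately preceding the lemma already observes that $D*x$ is a split decomposition of $\origin{D}*x$ and that $M(D*x)=M(D)$, so it only remains to verify the two conditions of Theorem~\ref{thm:can-forbid} for $D*x$, knowing that they hold for $D$.

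The first step is to check that every bag of $D*x$ is prime, a star, or a complete graph. Each bag $B$ of $D$ either contains no representative of $x$, in which case it is unchanged in $D*x$, or contains a unique representative $w$, in which case it is replaced by $B*w$. A short case analysis on the type of $B$ gives: if $B$ is a complete graph, then $B*w$ is a star with center $w$; if $B$ is a star with center $w$, then $B*w$ is a complete graph; if $B$ is a star with $w$ a leaf, then $N_B(w)$ is a single vertex and so $B*w = B$; and if $B$ is prime, then so is $B*w$, since local complementation preserves the cut-rank of every subset by Lemma~\ref{lem:vm-rw} and therefore preserves the presence or absence of splits.

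The main obstacle is the second step: showing that $D*x$ has no marked edge of type $KK$ or $S_pS_c$. For each marked edge $mm'$ with $m\in B$ and $m'\in B'$, one must track both the new type of each bag and the role of the endpoints $m$ and $m'$ (whether each becomes a leaf of a star, the center of a star, or an arbitrary vertex of a prime or complete bag). The key technical fact I would establish first, directly from the definition of representative together with Lemma~\ref{lem:represent}, is that the representatives of $x$ in adjacent bags are tightly coupled along marked edges: if $B$ has a representative $w\neq m$, then $B'$ has a representative if and only if $wm\in E(B)$, and in that case the representative of $B'$ is exactly $m'$; while if $w=m$, then the representative in $B'$ (if any) must lie on the unique continuation of the alternating path starting with $mm'$. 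With these coupling rules in hand, I would enumerate the finitely many combinations of original bag types of $B$ and $B'$ together with the possible positions of the representatives of $x$, transform each configuration according to the rules derived in the previous paragraph, and verify that producing a $KK$ or $S_pS_c$ edge in $D*x$ would force a $KK$ or $S_pS_c$ edge already present in $D$, contradicting the canonicity of $D$.
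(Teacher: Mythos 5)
Your proposal is correct, but note that the paper itself does not prove this lemma at all: it is imported from Bouchet~\cite{Bouchet88} as a black box, so there is no in-paper argument to compare against. Your route---check that $D*x$ is a split decomposition of $\origin{D}*x$ with $M(D*x)=M(D)$ and bags that are still prime, complete, or stars, and then apply Theorem~\ref{thm:can-forbid} together with uniqueness (Theorem~\ref{thm:CED}) to conclude it is \emph{the} canonical decomposition---is the natural self-contained proof given the tools the paper sets up, and all the key ingredients you isolate are right: the bag-type transitions ($K\mapsto S$ with center the representative, $S$ with center the representative $\mapsto K$, $S$ with the representative a leaf unchanged, prime stays prime by preservation of cut-rank), and the coupling rule along a marked edge $mm'$ ($m\in B$, $m'\in B'$): if the representative $w$ of $x$ in $B$ differs from $m$, then $x$ lies on the $B$-side, $B'$ has a representative iff $wm\in E(B)$, and then that representative is $m'$; the case $w=m$ is the mirror image with the roles of $B$ and $B'$ swapped. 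With these rules the enumeration you defer does close: tracking the new role of $m$ (leaf after $K\mapsto S$ since $m\neq w$; $K$-vertex after $S_c\mapsto K$; unchanged center when $w$ is a leaf and $m$ the center; prime otherwise) and the new role of $m'$ (center after $K\mapsto S$ since the representative in $B'$ is $m'$ itself; $K$-vertex after $S_c\mapsto K$; unchanged leaf; prime), one sees that a new $KK$ edge would force an old $S_pS_c$ edge, a new $S_pS_c$ edge would force an old $KK$ edge, and edges with at most one affected bag keep their type, all of which contradict canonicity of $D$ via Theorem~\ref{thm:can-forbid}. So the only gap is presentational (the table is asserted rather than written out), not mathematical; if you want a citable alternative to writing it out, this is exactly the content of Bouchet's original argument in~\cite{Bouchet88}.
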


	Let $x$ and $y$ be linked unmarked vertices in a split decomposition $D$, and let $P$ be the path in $D$ linking $x$ and $y$
	such that unmarked edges and marked edges appear alternately in the path.  
	Note that if $B$ is a bag of type $S$ containing an unmarked edge of $P$, then the center of $B$ is a representative of either $x$ or $y$.  The \emph{pivoting 	on $xy$ of $D$}, denoted by $D\wedge xy$, is
	the split decomposition obtained as follows: for each bag $B$ containing an unmarked edge of $P$, if $v, w\in V(B)$ represent 
	respectively $x$ and $y$ in $D$, then we replace $B$ with $B\wedge vw$. It is
	worth noticing that by Lemma~\ref{lem:represent}, we have $vw\in E(B)$, hence $B\wedge vw$ is well-defined.

\begin{lemma}[Adler, Kant\'e, and Kwon~\cite{AdlerKK15}] \label{lem:pivotdecom} 
	Let $D$ be a split decomposition of a connected graph $G$. 
	If $xy\in E(G)$, then $D\wedge xy=D*x*y*x$.  
\end{lemma}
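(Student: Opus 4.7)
The plan is to verify the equality $D\wedge xy = D*x*y*x$ by a bag-by-bag comparison along the common decomposition tree. Both sides preserve $M(D)$ (the former by the definition of $\wedge$, which only rewires unmarked adjacencies within the affected bags; the latter because local complementation at an unmarked vertex leaves $M(D)$ untouched), so by Remark~\ref{rem:localdecom} both sides share the decomposition tree $T_D$, and it suffices to compare the unmarked adjacencies inside each bag. Since $xy\in E(\origin{D})$, Lemma~\ref{lem:represent} yields an alternating path $P$ in $D$ linking $x$ and $y$; let $B_0,\dots,B_k$ be the bags $P$ visits, and let $v_i,w_i\in V(B_i)$ be the endpoints of the unmarked edge of $P$ in $B_i$, so that $v_i$ represents $x$ and $w_i$ represents $y$ in $D$. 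By definition, $D\wedge xy$ modifies only these bags, replacing $B_i$ by $B_i\wedge v_iw_i = B_i*v_i*w_i*v_i$, the second equality being the standard graph-theoretic identity applied inside each bag. Hence the $D\wedge xy$ side applies $*v_i*w_i*v_i$ to each $B_i$ on $P$ and nothing elsewhere.

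For $D*x*y*x$, the plan is to unfold each of the three operations into its bag-level actions and verify that the composite per bag matches the above. For each $B_i$ on $P$, the first $*x$ applies $*v_i$; the $*y$ subsequently applies $*w_i$, because the representative of $y$ in $B_i$ in $D*x$ is still $w_i$ (an alternating witness uses the edge $v_iw_i$, which is incident to $v_i$ and hence preserved by $*v_i$); and the final $*x$ applies $*v_i$ again. The composite on $B_i$ is $*v_i*w_i*v_i$, as desired. Writing $\mathcal{T}_z$ for the set of bags of $D$ containing a representative of an unmarked vertex $z$, for a bag $B$ off $P$ in $\mathcal{T}_x\setminus\mathcal{T}_y$ the representative $v$ of $x$ in $B$ is preserved across all three operations and $*y$ does not touch $B$; the two $*x$ applications then compose to the identity. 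Symmetrically for bags off $P$ in $\mathcal{T}_y\setminus\mathcal{T}_x$, and bags outside $\mathcal{T}_x\cup\mathcal{T}_y$ are untouched throughout. The trickier case is a ``branch'' bag $B$ in $\mathcal{T}_x\cap\mathcal{T}_y$ off $P$: here the alternating witness that $B$ contains a representative of $y$ passes through some $B_i$ on $P$ using an unmarked edge in $B_i$ not incident to $v_i$, which is flipped by $*v_i$, destroying the witness. Thus $B$ leaves $\mathcal{T}_y$ after the first $*x$, the middle $*y$ contributes nothing to $B$, and the two $*x$'s cancel.

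The main obstacle is the precise tracking of how representatives evolve across the three operations, in particular the verification that the rep-of-$y$ witness in a branch bag is broken by $*x$ while the rep-of-$x$ witness in the same bag survives. The underlying principle is that local complementation at $v'$ preserves the unmarked edges incident to $v'$ and flips only those among $N(v')\setminus\{v'\}$; alternating witnesses to representatives of $x$ inherit a canonical form in which every unmarked step is incident to the representative of $x$ in the current bag, so they survive $*x$ throughout, and symmetrically for $y$. The only place where the two witnesses interfere is at a bag $B_i$ on $P$, where the unmarked step of a $y$-witness coming from a branch bag fails to be incident to $v_i$ and is therefore flipped by $*v_i$. A careful case analysis along these lines, organized by the tree-position of each bag relative to $P$, matches the bag-level composite of $D*x*y*x$ to that of $D\wedge xy$ and closes the proof.
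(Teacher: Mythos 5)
The paper states this lemma without proof (it is quoted from the companion paper \cite{AdlerKK15}), so I can only judge your argument on its own merits. Your overall plan is sound: both sides keep $M(D)$ and the decomposition tree, on a bag $B_i$ of the alternating path $P$ the pivot acts as $B_i*v_i*w_i*v_i$, and the whole statement reduces to showing that every bag off $P$ receives an even number of local complementations, necessarily all at the same vertex (its unique marked vertex pointing towards $P$, which is simultaneously the only possible representative of $x$ and of $y$ in that bag). Your treatment of the path bags and of the branch bags in $\mathcal{T}_x\cap\mathcal{T}_y$ is correct.

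The gap is in the two remaining off-path cases. For $B\in\mathcal{T}_x\setminus\mathcal{T}_y$, both of your claims are false: the first $*x$ complements $B_i$ at $v_i$, and since the attachment vertex $c$ of $B$'s subtree satisfies $cv_i\in E(B_i)$ and $cw_i\notin E(B_i)$, this complementation \emph{creates} the edge $cw_i$; hence $B$ acquires a representative of $y$ in $D*x$ and \emph{is} touched by the middle $*y$, and after that $*y$ (which deletes $cv_i$) it is \emph{not} touched by the last $*x$. A minimal example: $B_0$ on $\{x,y,c\}$ with edges $xy,xc$, a bag $\{d,u\}$ with edge $du$, and marked edge $cd$; here $d$ represents $y$ in $D*x$ but no longer represents $x$ in $D*x*y$. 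The equality survives only because the two touches (first and second operation, both at the same vertex of $B$) still cancel, which is a different bookkeeping from yours. The appeal to symmetry for $B\in\mathcal{T}_y\setminus\mathcal{T}_x$ is also unjustified: $*x*y*x$ is not symmetric in $x$ and $y$, and the literal transcription of your previous claim would have such a bag touched exactly once (by $*y$), contradicting the very identity you are proving; in truth it is touched by the middle $*y$ and again by the last $*x$ (the complementation of $B_i$ at $w_i$ creates $cv_i$). The correct accounting, which your ``witnesses survive or are destroyed'' principle misses because representatives can also be \emph{created}, is: an off-path bag is touched by the first $*x$ iff $cv_i\in E_D(B_i)$, by the last $*x$ iff $cw_i\in E_D(B_i)$, and by the middle $*y$ iff exactly one of these holds (all other conditions along its chain towards $P$ being invariant throughout, since every complementation in an intermediate bag occurs at that bag's vertex pointing towards $P$). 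This gives an even number of touches in all four cases and closes the proof; as written, your case analysis does not.
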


	\subsection{Removing vertices}\label{subsec:modifications}
	
	Let $G$ be a distance-hereditary graph and let $D$ be its split decomposition. 
	Let $S$ be a vertex set of $G$.
	We explain how we transform $D$ into a split decomposition of $G-S$.
	Note that the split decomposition obtained from $D$ by removing vertices in $S$ is not necessarily a split decomposition 
	because the resulting marked graph may have bags of size at most $2$.
	In this case, we need to recompose a marked edge incident with each bag of size at most $2$ unless the resulting marked graph has at most two vertices.
	
	Suppose $D$ is canonical.
	We frequently consider connected components $T$ of $D-V(B)$, for a bag $B$ of $D$.
	This will be used to define limbs in the next subsection. 
	For a bag $B$ of $D$ and a connected component $T$ of $D-V(B)$, let us denote by $\zeta_b(D,B,T)$ and $\zeta_c(D,B,T)$ 
	the end vertices of the marked edge in $D$ linking $B$ and $T$ that are in $V(B)$ and in $V(T)$ respectively.
	Subscripts $b$ and $c$ stand for bag and component, respectively. 
    We always treat $T$ as a canonical split decomposition and regard $\zeta_c(D,B,T)$ as an unmarked vertex.

	\subsection{Limbs and characterization of linear rank-width}\label{subsec:limbs}

	To present the characterization of the linear rank-width of distance-hereditary graphs, 
	we need the new notion called limbs~\cite{AdlerKK15}.
	For an unmarked vertex $y$ in $D$ and a bag $B$ of $D$ containing a marked vertex representing $y$,
	let $T$ be the connected component of $D-V(B)$ containing $y$, and 
	let $v:=\zeta_c(D,B,T)$ and $w:=\zeta_b(D,B,T)$.
	We define the \emph{limb} $\limb:=\limb_D[B,y]$ with respect to $B$ and $y$ as follows:
	\begin{enumerate}
	\item if $B$ is of type $K$, then $\limb:=T*v- v$,       
	\item if $B$ is of type $S$ and $w$ is a leaf, then $\limb:=T- v$,
	\item if $B$ is of type $S$ and $w$ is the center, then $\limb:=T\wedge vy - v$.
	\end{enumerate}
	While $T$ is a canonical split decomposition, $\limb$ may not be a canonical split decomposition, because deleting $v$ may create a
	bag of size $2$.  We analyze the cases when such a bag appears, and describe how to transform it into a canonical split decomposition.
	Suppose that a bag $B'$ of size $2$ appears in $\limb$.
	If $B'$ has no adjacent bags in $\limb$, then $B'$ itself is a canonical split decomposition.
	We may assume there is a bag adjacent to $B'$.

	\begin{figure}
\centerline{
\includegraphics[scale=0.45]{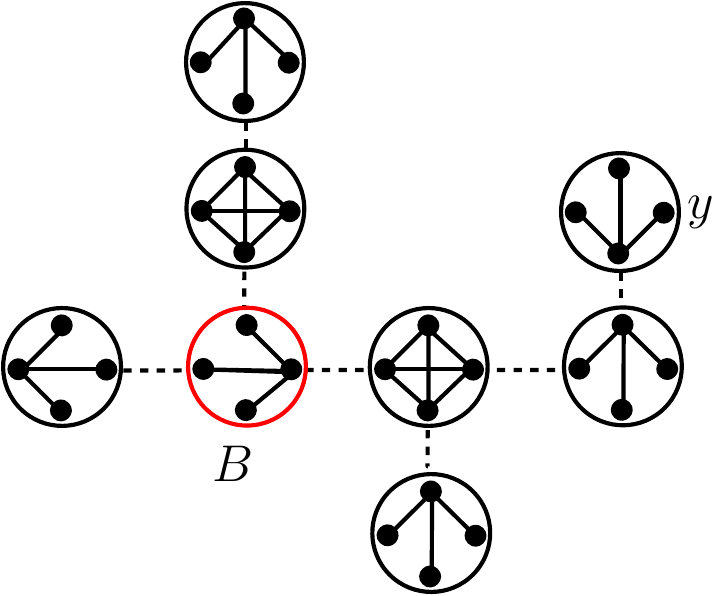} \quad\quad
\includegraphics[scale=0.45]{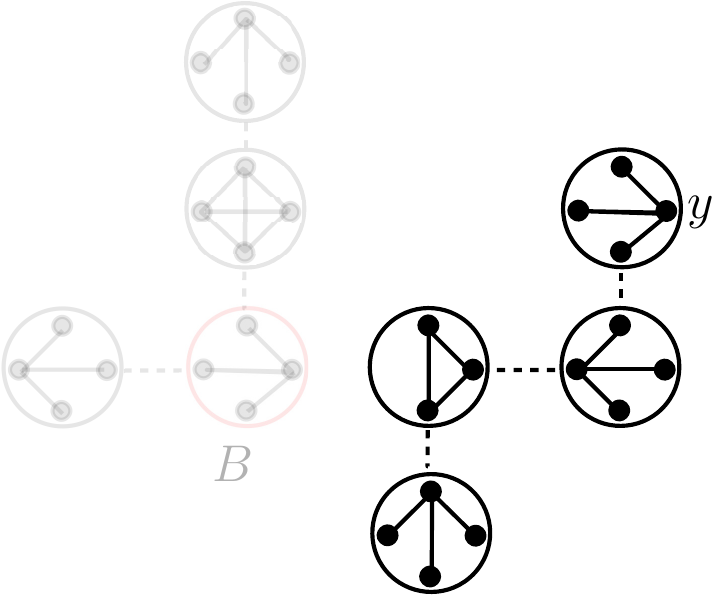}}
\caption{An example of a limb $\mathcal{L}_D[B, y]$. }
\label{fig:limb}
\end{figure}

	\begin{enumerate}
	\item ($B'$ has one adjacent bag $B_1$.) \\
	If $v_1\in V(B_1)$ is the marked vertex adjacent to a vertex of $B'$	
	and $r$ is the unmarked vertex of $B'$ in $\mathcal{L}$,
	then we remove the bag $B'$ and replace $v_1$ with $r$. In other words, we recompose along the marked edge connecting $B'$ and $B_1$.

\item ($B'$ has two adjacent bags $B_1$ and $B_2$.)\\
  	If $v_1\in V(B_1)$ and $v_2\in V(B_2)$ are the two marked vertices that are adjacent to the two marked vertices of $B'$, then we 
  	remove $B'$ and add a marked edge $v_1v_2$. If the new marked edge $v_1v_2$ is of type KK or $S_pS_c$, then by recomposing along $v_1v_2$, we finally transform the limb into a canonical split decomposition.
\end{enumerate}

	Let $\limbtil_D[B,y]$ be the canonical split decomposition obtained from $\limb_D[B,y]$ and we call it the \emph{canonical limb}.  
	Let $\limbhat_D[B,y]$ be the graph obtained from $\limb_D[B,y]$ by
	recomposing all marked edges. 
	For a bag $B$ of $D$ and a connected component $T$ of $D- V(B)$, we define $f_D(B,T)$ as the linear rank-width of $\limbhat_D[B,y]$ for some unmarked vertex $y\in V(T)$.  
	It was shown that  $f_D(B,T)$ does not depend on the choice of $y$. 
	
	 \begin{proposition}[Adler, Kant\'e, and Kwon; Proposition 3.4 of \cite{AdlerKK15}]\label{prop:preservelrw} 
Let $B$ be a bag of $D$ and let $y$ be an unmarked vertex of $D$ represented by a vertex $w$ in $B$.
 Let $x\in V(\origin{D})$.  
 If an unmarked vertex $y'$ is represented by $w$ in $D*x$, then $\limbhat_D[B,y]$ is locally equivalent to $\limbhat_{D*x}[(D*x)[V(B)],y']$. Therefore,
   $f_D(B,T)=f_{D*x}((D*x)[V(B)],T_x)$ where $T$ and $T_x$ are the components of $D\setminus V(B)$ and $(D*x)\setminus V(B)$ containing $y$, respectively.
 \end{proposition}

	As a variant of Theorem~\ref{thm:maintreeforpathwidth}, distance-hereditary graphs of bounded linear rank-width can be characterized using limbs.
      
\begin{theorem}[Adler, Kant\'e, and Kwon~\cite{AdlerKK15}]\label{thm:mainchap2}  
  Let $k$ be a positive integer and let $D$ be the canonical split decomposition of a connected distance-hereditary graph $G$.  Then the following are equivalent. 
  \begin{enumerate}[(1)]
  \item $G$ has linear rank-width at most $k$.
  \item For each bag $B$ of $D$, $D-V(B)$ has at most two connected components $T$ such that $f_D(B,T)=k$, and every other connected component $T'$ of $D- V(B)$ satisfies that $f_D(B,T')\le k-1$.
  \item $T_D$ has a path $P$ such that for each node $v$ of $P$ and each connected component $H$ of $D-V(\bag{D}{v})$ containing no bags $\bag{D}{w}$ with $w\in V(P)$,  $f_D(\bag{D}{v},H)\le k-1$.
  \end{enumerate}
\end{theorem}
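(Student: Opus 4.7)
The plan is to follow the template of Theorem~\ref{thm:maintreeforpathwidth} for trees, proving (3) $\Rightarrow$ (1) $\Rightarrow$ (2) $\Rightarrow$ (3). The implication (2) $\Rightarrow$ (3) is a direct transfer of the classical tree argument: call a component $C$ of $D\setminus V(\bag{D}{v})$ \emph{heavy} if $f_D(\bag{D}{v},C)=k$, and observe that (2) means at most two heavy components meet at any node $v$ of $T_D$. Hence the greedy algorithm that, for ordinary trees weighted by path-width, threads a path through all heavy subtrees (as in Theorem~\ref{thm:maintreeforpathwidth}) applies verbatim to $T_D$ weighted by $f_D$ and produces the required path $P$.

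For (3) $\Rightarrow$ (1) I would induct on $k$. The base case is straightforward since a canonical decomposition with $f_D \equiv 0$ consists essentially of a single bag. For the inductive step, let $P=v_1\cdots v_\ell$ be the spine from (3). For each $i$ and each component $C$ of $D\setminus V(\bag{D}{v_i})$ with $T_C$ avoiding $P$, the hypothesis $f_D(\bag{D}{v_i},C)\le k-1$ combined with the inductive hypothesis gives a linear layout of $\limbhat_D[\bag{D}{v_i},y]$ of width at most $k-1$ for any unmarked $y\in V(C)$. Using Proposition~\ref{prop:limb} and Lemma~\ref{lem:folklore}, these layouts can be arranged so that the vertex representing the attachment to $\bag{D}{v_i}$ sits at an extreme end of its block. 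Concatenating the blocks in the order of $P$ and interleaving the unmarked vertices of each $\bag{D}{v_i}$ with its own off-path blocks yields a linear layout $\sigma$ of $G$ whose width is at most $k$: any cut of $\sigma$ either lies inside one off-path block (rank $\le k-1$) or crosses the single marked edge of $P$ at some $v_i$, in which case the added contribution is at most $1$ because that marked edge is a cut-edge carrying rank exactly one.

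The main obstacle is (1) $\Rightarrow$ (2), which I would prove by contraposition. Suppose some bag $B$ of $D$ has three components $T_1,T_2,T_3$ of $D\setminus V(B)$ with $f_D(B,T_i)=k$; I want to show $\lrw(G)\ge k+1$. Fix any linear layout $\sigma$ of $G$ and, after reindexing, assume $T_2$ lies between $T_1$ and $T_3$ in the order of the earliest positions of their unmarked vertices. The key technical step is that the restriction of $\sigma$ to the unmarked vertices of $V(T_i)$ is, up to a sequence of local complementations at vertices of $T_i$ (which preserves widths by Lemma~\ref{lem:vm-rw} and Proposition~\ref{prop:limb}), a linear layout of $\limbhat_D[B,y]$ of width bounded by the maximum cut-rank of $\sigma$ internal to $V(T_i)$. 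Since each limb has linear rank-width exactly $k$, some internal cut of $\sigma$ inside each $V(T_i)$ has rank $\ge k$; applied to the middle component $T_2$, a submodularity argument on the cut-rank function that exploits the cut-edge joining $T_2$ to $B$ then forces a global cut of $\sigma$ to have rank at least $k+1$, the desired contradiction. The hardest point will be justifying the reduction from $\sigma$ to a layout of the limb rigorously, since the limb is defined through a local complementation or pivoting at a \emph{marked} vertex followed by a deletion, not merely by restriction; here Corollary~\ref{cor:pivotdecom} together with the case analysis defining $\limbhat_D[B,y]$ (whether $B$ is of type $K$ or type $S$, and whether the attachment is at a leaf or at the center) has to be unfolded carefully to exhibit the explicit local-equivalence transfer between $\sigma$ and the induced layout of the limb.
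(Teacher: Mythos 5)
First, a point of reference: this paper does not prove Theorem~\ref{thm:mainchap2} at all; it is imported verbatim from the companion paper~\cite{AdlerKK15}, so there is no in-paper proof to compare yours against. Judged on its own merits, your plan reproduces the right template (the analogue of Theorem~\ref{thm:maintreeforpathwidth}), but each implication is asserted exactly at its technical crux, and at least two of those assertions are not yet proofs. The recurring difficulty, which you flag at the very end but do not resolve, is that a limb is \emph{not} an induced subgraph of $G$: by definition it is $T*v\setminus v$, $T\setminus v$, or $T\wedge vy\setminus v$, so passing between cuts of $G$ internal to a component $T$ and cuts of $\limbhat_D[B,y]$ involves a rank-one correction on the frontier vertices (the submatrix $A_G[X,Y]$ with $X,Y\subseteq V(T)$ is \emph{not} preserved by the local complementation at the marked vertex, unlike full cut-ranks in Lemma~\ref{lem:vm-rw}). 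This is precisely where your $\pm 1$ bookkeeping can break in both directions $(3)\Rightarrow(1)$ and $(1)\Rightarrow(2)$.

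Concretely: in $(3)\Rightarrow(1)$ your accounting ``inside an off-path block the rank is $\le k-1$, otherwise the marked edge adds at most $1$'' is too coarse, because a cut taken \emph{inside} an off-path block simultaneously crosses that block's split to the rest of $G$ \emph{and} separates the already-placed spine material from the not-yet-placed spine material, so the naive bound is $(k-1)+1+1=k+1$; getting $k$ requires showing how the limb definition (the $*v$ or $\wedge vy$ correction) and a careful interleaving of centers, leaves and limbs within each star/complete bag absorb one of these rank units, and that is the actual content of the companion paper's argument, not a routine concatenation via Lemma~\ref{lem:folklore}. (Your base case is also off: $f_D\equiv 0$ off a path does not force a single bag; it gives a thread-graph-like decomposition whose tree is a long path, cf.\ Theorem~\ref{thm:charlrw1}.) In $(1)\Rightarrow(2)$, ordering $T_1,T_2,T_3$ by earliest occurrence does not guarantee that, at the position where the internal cut of $T_2$'s limb has rank $\ge k$, there is a $T_1$-vertex before it and a $T_3$-vertex after it; the classical argument needs first \emph{and} last occurrences (or a choice of witness positions), and even then you must show that the extra row/column increases the rank by one \emph{despite} the frontier correction, i.e.\ that the rank $k$ certified in $\limbhat$ does not degrade to $k-1$ in $A_G[S\cap V(T_2),\overline{S}\cap V(T_2)]$ exactly when you try to add the $+1$. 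Finally, $(2)\Rightarrow(3)$ ``verbatim'' silently uses monotonicity of $f_D$ under passing to nested off-path components (the smaller limb is a vertex-minor of the larger, via Proposition~\ref{prop:limb} and Corollary~\ref{cor:pivotdecom}); true, but it must be stated and proved. So the skeleton is right, but the proposal currently defers the genuinely hard steps rather than carrying them out.
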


\section{Path-width of decomposition trees}\label{sec:pwofcanonicaltrees}

	To prove Theorem~\ref{thm:largelrw}, 
	we derive a relation between the linear rank-width of a graph whose prime induced subgraphs have bounded linear rank-width 
	and the path-width of its decomposition tree. 

\begin{proposition}\label{prop:generalupperbound}
  Let $p$ be a positive integer. Let $G$ be a connected graph whose prime induced subgraphs have linear rank-width at most $p$, and let $D$ be the canonical split decomposition of $G$, and let $T_D$ be the decomposition tree of $G$ associated with $D$.  
  Then $\lrw(G) \le 2(p+2)(\pw(T_D)+1)$.
\end{proposition}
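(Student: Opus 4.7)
The plan is to construct an explicit linear layout $\Sigma$ of $V(G)$ of width at most $2(p+2)(k+1)$, where $k := \pw(T_D)$, by combining a path decomposition of $T_D$ of width $k$ with a linear layout of width $p$ of each bag of $D$.

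First, I fix a path decomposition $(P,\mathcal{B})$ of $T_D$ of width $k$, and, for every $v \in V(T_D)$, a linear layout $\sigma_v$ of $V(\bag{D}{v})$ of width at most $p$. Such a $\sigma_v$ exists because each bag is either prime with linear rank-width at most $p$ (by hypothesis), or a star or a complete graph (both having linear rank-width at most $1 \le p$). I then order $V(T_D)$ as $v_1,\ldots,v_m$ by first appearance in $(P,\mathcal{B})$ and build $\Sigma$ by concatenating, for each $i$, the unmarked vertices of $\bag{D}{v_i}$ in the order induced by $\sigma_{v_i}$.

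The width analysis rests on a structural observation. For any prefix $X$ of $\Sigma$, let $j$ be such that $X$ is the union of the unmarked vertices of $\bag{D}{v_1},\ldots,\bag{D}{v_{j-1}}$ and a $\sigma_{v_j}$-prefix of the unmarked vertices of $\bag{D}{v_j}$; let $t$ be the first time $v_j$ appears in $(P,\mathcal{B})$. Then $|\mathcal{B}_t| \le k+1$, and, by the interval property of path decompositions applied to the tree $T_D$, the nodes of $T_D$ outside $\mathcal{B}_t$ partition into a ``past'' side (contained in $\{v_1,\ldots,v_{j-1}\}$) and a ``future'' side, with no edge of $T_D$ joining a past node to a future node.

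The main step is to attribute to each active bag $\bag{D}{u}$ with $u \in \mathcal{B}_t$ a contribution of at most $2(p+2)$ to $\cutrk_G(X)$: $p+2$ for the rank of the adjacency between the unmarked vertices of $\bag{D}{u}$ in $X$ and the past-side external vertices, and $p+2$ for the future-side. By Lemma~\ref{lem:represent}, the existence of an edge of $G$ between an unmarked vertex $y$ of $\bag{D}{u}$ and an external unmarked vertex $z$ factors through the adjacency of $y$ in $\bag{D}{u}$ to the unique marked vertex $w$ on the alternating path in $D$ from $y$ to $z$; so the relevant block of $A_G$ factors through a submatrix of the adjacency matrix of $\bag{D}{u}$ indexed by unmarked vertices and direction-specific marked vertices. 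Using $\sigma_u$ together with a small slack for reshuffling marked vertices to the end of the layout (at additive cost at most $2$ in the width), this submatrix becomes a prefix-cut of a modified layout of width at most $p+2$, and so has rank at most $p+2$. Summing over the at most $k+1$ active bags yields $\cutrk_G(X) \le 2(p+2)(k+1)$.

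The main obstacle is the per-bag-per-direction bound of $p+2$: because the relevant submatrix of the adjacency matrix of $\bag{D}{u}$ is not a prefix-cut of $\sigma_u$, its rank is not directly bounded by $p$, and the reshuffling-with-constant-overhead argument must be carried out carefully to keep the slack additive and to ensure that the direction-specific grouping of marked vertices is compatible with past/future classification induced by the global ordering.
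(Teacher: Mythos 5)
There is a genuine gap, and it is exactly at the point you flag as ``the main obstacle'': the per-bag bound of $p+2$ per side is not attainable, and in fact the layout you construct can have unbounded width. After fixing a prefix $X$, the crossing adjacencies between a past-side external vertex and a future-side external vertex are, by Lemma~\ref{lem:represent}, governed by a submatrix $A_{\bag{D}{u}}[M_{\mathrm{past}},M_{\mathrm{fut}}]$ of an active bag's adjacency matrix, where $M_{\mathrm{past}},M_{\mathrm{fut}}$ is an essentially arbitrary bipartition of that bag's marked vertices induced by the path decomposition of $T_D$. The rank of such a submatrix is \emph{not} bounded by $\lrw(\bag{D}{u})+O(1)$: ``reshuffling'' an arbitrary set of vertices to the end of a layout costs up to the number of vertices moved, not an additive $2$ (the $+2$ trick only moves two designated endpoints). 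Concretely, let $D$ have one prime bag isomorphic to a long cycle $C_n$ with all $n$ vertices marked, each matched to a star bag with marked center and two unmarked leaves (this is canonical by Theorem~\ref{thm:can-forbid}); then $G$ is $C_n$ with each vertex blown up into two false twins, every prime induced subgraph of $G$ has linear rank-width at most $2$, and $T_D$ is a star, so $\pw(T_D)=1$ and the desired bound is a constant. An optimal width-$1$ path decomposition of the star $T_D$ may enumerate the leaves in an order alternating around the cycle; your ordering by first appearance then produces a prefix $X$ consisting of the ``odd'' twin-pairs, and $\cutrk_G(X)=\rank$ of the odd--even adjacency of $C_n$ over $\mathbb{F}_2$, which is $n/2-1$. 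So the construction itself, not just its analysis, fails: nothing in a path decomposition of $T_D$ coordinates the order of the subtrees hanging off a bag with a good internal layout of that bag, and that coordination is the crux of the statement.

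This is also where your route diverges from the paper's. The paper does not order bags by a path decomposition of $T_D$; it uses the path characterization of path-width (Theorem~\ref{thm:maintreeforpathwidth}(3)) to get a path $P$ in $T_D$ whose off-path components have path-width at most $\pw(T_D)-1$, proceeds by induction on $\pw(T_D)$, and, crucially, uses Lemma~\ref{lem:genupperbound}: inside each bag $B_i$ on $P$ it takes a layout of $B_i$ of width at most $p+2$ with prescribed first and last vertices (the $+2$ only pays for the two endpoints) and \emph{inserts each hanging component at the position of its marked vertex in that layout}, so the order of the components around a bag is dictated by the bag's own layout. If you want a non-inductive argument, you would need to build the global order bottom-up from the bags' layouts in this interleaved way, rather than from a path decomposition of $T_D$; as written, the claimed covering of the cut matrix (which in any case omits the past-external versus future-external block) and the $p+2$ rank bound per active bag cannot be repaired.
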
	

	We prove Proposition~\ref{prop:generalupperbound} by induction on the path-width of $T_D$.
	If its path-width is $0$, then it consists of one node, and the result directly follows from the given condition that every prime induced subgraph has linear rank-width at most $p$.
	Note that complete graphs and stars have linear rank-width at most $1$.
	We assume that the path-width of $T_D$ is at least $1$. 
	Using Lemma~\ref{thm:maintreeforpathwidth},
	$T$ contains a path $P$ such that for each node $v$ of $P$ and each connected component $T'$ of $T- v$ not containing a node of $P$, $\pw(T')\le k-1$.
	So, by induction, we can obtain an upper bound of the linear rank-width of split decompositions corresponding to such components $T'$.
	From this, we will obtain an upper bound of the linear rank-width of the whole graph.
		
	We need the following lemma. We point out that Lemma~\ref{lem:genupperbound} does not require 
	$D$ to be a canonical split decomposition, and this relaxation will be useful for an easier argument in the main proof.

\begin{lemma}\label{lem:genupperbound} 
	Let $k$ and $p$ be positive integers. 
	Let $B$ be a bag of a split decomposition $D$ with two unmarked vertices $a$ and $b$ such that 
	for every connected component $H$ of $D- V(B)$, $\lrw(\origin{H})\le k$.
	If $B$ has a linear layout of width at most $p$ 
	whose first and last vertices are $a$ and $b$ respectively, then $\origin{D}$ has a linear
  	layout of width at most $2p+k$ whose first and last vertices are $a$ and $b$ respectively.
\end{lemma}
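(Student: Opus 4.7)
The plan is to construct $\sigma'$ by an expansion procedure: start with the given layout $\sigma = (v_1, \ldots, v_n)$ of $B$, where $v_1 = a, v_n = b$, of width $\leq p$, and for each marked vertex $v_j$ of $B$ fix an arbitrary linear layout $\sigma_j$ of $\origin{D_{i(j)}}$ of width $\leq k$ (which exists since $\lrw(\origin{D_{i(j)}}) \leq k$). Then $\sigma'$ is obtained from $\sigma$ by substituting each marked $v_j$ with $\sigma_j$ and leaving unmarked $v_j$ in place. Because $a, b$ are unmarked, they remain the first and last vertices of $\sigma'$. The substance of the proof is bounding the width of $\sigma'$.

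For each marked vertex $m$ of $B$, let $z_m$ be the marked vertex of $D_{i(m)}$ joined to $m$ by the marked edge in $D$, and let $R_m \subseteq V(\origin{D_{i(m)}})$ be the set of unmarked vertices represented by $z_m$. Lemma~\ref{lem:represent} supplies the key adjacency fact: an unmarked vertex $y \in V(\origin{D_{i(m)}})$ is adjacent in $\origin{D}$ to an unmarked vertex $x \in V(B)$ iff $xm \in E(B)$ and $y \in R_m$, and two unmarked vertices $y \in V(\origin{D_{i(m)}})$ and $y' \in V(\origin{D_{i(m')}})$ with $m \neq m'$ are adjacent in $\origin{D}$ iff $mm' \in E(B)$, $y \in R_m$, and $y' \in R_{m'}$. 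Now consider any cut of $\sigma'$. If it lies at a boundary of blocks (Case~1) and corresponds to a cut of $\sigma$ at position $j$, then the cut-matrix of $\origin{D}$ is obtained from $A_B[\{v_1,\ldots,v_j\}, \{v_{j+1},\ldots,v_n\}]$ by replacing, for each marked $v_t$ with $t \leq j$, the row indexed by $v_t$ with $|V(\origin{D_{i(t)}})|$ rows (those indexed by $R_{v_t}$ being copies of the row, the others being zero), and analogously for columns. Such row/column duplications do not increase rank, so this cut-rank is at most $p$.

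For a cut strictly inside some expanded block $\sigma_j$ with $v_j$ marked (Case~2), let $\alpha$ (resp.\ $\beta$) be the prefix (resp.\ suffix) of $\sigma_j$ on the left (resp.\ right) side of the cut. Write the cut-matrix of $\origin{D}$ in block form
\[ A^{*} = \begin{pmatrix} A^{\mathrm{int}} & A^{\mathrm{ext}}_{R} \\ A^{\mathrm{ext}}_{L} & A^{\mathrm{out}} \end{pmatrix}, \]
where $A^{\mathrm{int}}$ is the cut-matrix of $\origin{D_{i(j)}}$ between $\alpha$ and $\beta$ (rank $\leq k$, since $\sigma_j$ has width $\leq k$). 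By Lemma~\ref{lem:represent}, every column of $A^{\mathrm{ext}}_{R}$ is a scalar multiple of $\mathbf{1}_{\alpha \cap R_{v_j}}$ (adjacencies between $\alpha$ and the outside factor as $[x \in R_{v_j}]$ times a function of $y$), so $\rank(A^{\mathrm{ext}}_{R}) \leq 1$; symmetrically, $\rank(A^{\mathrm{ext}}_{L}) \leq 1$. The critical observation is that the horizontally concatenated block $(A^{\mathrm{ext}}_{L} \mid A^{\mathrm{out}})$ is a row/column duplication of the submatrix $A_B[\{v_1,\ldots,v_{j-1}\}, \{v_j, v_{j+1}, \ldots, v_n\}]$, which is the cut-matrix of $\sigma$ at position $j-1$, and hence has rank at most $p$. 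Splitting $A^{*}$ into its top and bottom row-blocks then yields
\[ \rank(A^{*}) \leq \rank\bigl(A^{\mathrm{int}} \mid A^{\mathrm{ext}}_{R}\bigr) + \rank\bigl(A^{\mathrm{ext}}_{L} \mid A^{\mathrm{out}}\bigr) \leq (k+1) + p \leq 2p + k, \]
where the last step uses $p \geq 1$. This is the desired bound.

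The main obstacle lies in Case~2: the cut-matrix $A^{*}$ mixes an internal contribution from a cut inside $\origin{D_{i(j)}}$ with external contributions through $B$, and a naive four-term bound on ranks would give only $k + p + 2$. The key insight that tightens this to $(k+1) + p \leq 2p + k$ is Lemma~\ref{lem:represent}: all adjacencies between $V(\origin{D_{i(j)}})$ and the remainder of $\origin{D}$ factor through the single set $R_{v_j}$, so the off-diagonal cross blocks contribute only rank $1$ each and can be absorbed into the neighboring row-blocks.
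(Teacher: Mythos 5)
Your proposal is correct and follows essentially the same route as the paper: the same layout obtained by substituting, for each marked vertex of $B$, a width-$\le k$ layout of the corresponding component, with each cut-rank then bounded via the split structure (Lemma~\ref{lem:represent}) by decomposing the cut-matrix into an internal block of rank at most $k$ and external blocks that are row/column duplications (plus zero rows/columns) of cuts of the given layout of $B$; your bookkeeping yields $(k+1)+p\le 2p+k$ where the paper's three-block estimate yields $p+p+k$. Two cosmetic points: the set you call $R_m$ should be the set of unmarked vertices represented by the marked vertex $m$ of $B$ (equivalently the neighborhood of $z_m$ in $\origin{D_{i(m)}}$), since under the paper's definition of ``represents'' the vertex $z_m$ itself represents no unmarked vertex of its component, and the block layouts should be taken on $\origin{D_{i(j)}}\setminus z_{v_j}$ (as the paper does) so that no vertex outside $V(\origin{D})$ appears in the final layout.
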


\begin{proof} 
	Let $G:=\origin{D}$, and 
	let $L_B:=( w_1, w_2, \ldots, w_m)$ be a linear layout of $B$ of width at most $p$ such that $a=w_1$ and $b=w_m$. 
	For each $j\in \{1, 2, \ldots, m\}$, 
	\begin{enumerate}
	\item if $w_j$ is an unmarked vertex, then let $L_j:=(w_j)$, and
	\item if $w_j=\zeta_b(D, B, H)$ for some connected component $H$ of $D-V(B)$, 
	then let $L_j$ be a linear layout of
	$\origin{H}-\zeta_c(D, B, H)$ having width at most $k$.
	\end{enumerate}
	We define $L:=L_1\oplus L_2\oplus \cdots \oplus L_m$.
	We observe that $L$ is a linear layout of $G$.
	For each $j\in \{1, 2, \ldots, m\}$, we choose an unmarked vertex $y_j$ represented by $w_j$.
	If $w_j$ is an unmarked vertex, then $y_j=w_j$.

        We claim that $L$ has width at most $2p+k$.  It is sufficient to prove that for every $w\in V(G)\setminus \{a,b\}$, 
        $\cutrk_{G}(\{v: v\le_{L} w\})\le 2p +k$.  Let
        $w\in V(G)\setminus \{a,b\}$ and let $S_w:=\{v:v\le_{L} w\}$ and $T_w:=V(G)\setminus S_w$.  

	Let $H_j$ be a connected component of $D-V(B)$ such that $\zeta_b(D, B, H_j)=w_j$.
	Observe that if all vertices in $V(H_j)\cap V(G)$ are contained in $S_w$, 
	then all vertices in $V(H_j)\cap V(G)$ that have a neighbor in $T_w$ have exactly the same set of neighbors in $T_w$, which is $N_G(y_j)\cap T_w$.
	Therefore, when we compute the rank of the matrix $A(G)[S_w, T_w]$, 
	we can replace all vertices in $V(H_j)\cap V(G)$ with $y_j$. 
	The same observation holds for connected components fully contained in $T_w$.
	Also, for two distinct connected components $H_{j_1}, H_{j_2}$ of $D-V(B)$ where
	all vertices of $V(H_{j_1})\cap V(G)$ are contained in $S_w$ and all vertices of $V(H_{j_2})\cap V(G)$ are contained in $T_w$, 
	$y_1$ and $y_2$ are adjacent in $G$ if and only if 
	$\zeta_b(D, B, H_1)$ is adjacent to $\zeta_b(D, B, H_2)$ in $B$.
	This is an implication of Lemma~\ref{lem:represent}.

        Having it, we can observe that if $w$ is an unmarked vertex in $B$, then
	\[\cutrk_{G}(S_{w})=\cutrk_{B}(\{v: v\le_{L_{B}} w\})\le p.\] 
	Thus, we may assume that $w$ is contained in some connected component $H$ of $D-V(B)$.
	Let $j\in \{1, 2, \ldots, m\}$ such that $\zeta_b(D, B, H)=w_j$.

	Note that $H$ is the unique component of $D-V(B)$ possibly intersecting both $S_w$ and $T_w$.
	Since all vertices of $V(H)\cap V(G)$ having a neighbor in $V(G)\setminus V(H)$ have the same neighborhood in $V(G)\setminus V(H)$ (that is, $(V(H)\cap V(G), V(G)\setminus V(H))$ is a split), 
	we have 
	\begin{enumerate}[(1)]
	\item $\cutrk^*_{G}(S_{w}, T_{w}\setminus  V(H))
	\le 
	\max \{\cutrk_{B}(\{v: v\le_{L_{B}} w_{j-1}\}), \cutrk_{B}(\{v: v\le_{L_{B}} w_j\})\}
	\le p$.
	\item 
	$\cutrk^*_{G}(S_{w}\setminus V(H), T_{w})
	\le 
	\max \{\cutrk_{B}(\{v: v\le_{L_{B}} w_{j-1}\}), \cutrk_{B}(\{v: v\le_{L_{B}} w_j\})\} 
	\le p. $
	\item  $\cutrk^*_{G}(S_{w}\cap V(H), T_{w}\cap V(H))\le k$.
	\end{enumerate}
	Therefore, we have
	\begin{align*}
	\cutrk_{G}(S_{w})
	&\le \cutrk^*_{G}(S_{w}, T_{w}\setminus  V(H))
	+\cutrk^*_{G}(S_{w}\setminus V(H), T_{w}) \\ 
	&+ \cutrk^*_{G}(S_{w}\cap V(H), T_{w}\cap V(H))\\
	&\le p + p + k\le 2p+k.
	\end{align*}
      
        We conclude that $L$ is a linear layout of $G$ of width at most $2p+k$ whose first and last vertices are $a$ and $b$, respectively.
        \end{proof}
	
\begin{proof}[Proof of Proposition~\ref{prop:generalupperbound}]
	We prove it by induction on $k:=\pw(T_D)$.
	If $k=0$, then $T_D$ consists of one node, and $G$ is either a prime graph, a complete graph, or a star.
	Note that complete graphs and stars have linear rank-width at most $1$. 
	Thus, we have $\lrw(G)\le p\le 2(p+2)$.
	We may assume that $k\ge 1$.

      Since $\pw(T_D)=k\ge 1$, by Theorem~\ref{thm:maintreeforpathwidth}, there exists a path $P:=v_1v_2 \cdots v_n$ in $T_D$ such that for each node $v$ in $P$ and each connected component $T$
        of $T_D- v$ not intersecting $P$, $\pw(T)\le k-1$.  For each $i\in \{1, 2, \ldots, n\}$, let $B_i:=\bag{D}{v_i}$.  By induction hypothesis, for each $i\in \{1, 2, \ldots, n\}$ and each connected component $H$ of
        $D-V(B_i)$ not intersecting $\bigcup_{1\leq j \leq n} V(B_j)$, we have $\lrw(\origin{H}) \leq 2(p+2)k$.
	
	Now, let us  modify the given canonical split decomposition by two additional unmarked vertices so that
	we can easily apply Lemma~\ref{lem:genupperbound}. 
	For each $i\in \{1, 2, \ldots, n\}$, let $L_{B_i}$ be a linear layout of $B_i$ of width at most $p$.	
	First, we add a twin of the first vertex of $L_{B_1}$ in $B_1$ such that the added vertex is unmarked.
	Similarly, we add a twin of the last vertex of $L_{B_n}$ in $B_n$ such that the added vertex is unmarked.
	Let $a_1$ be the vertex added to $B_1$ and $b_n$ be the vertex added to $B_n$.
	It is not difficult to see that $B_1$ has a linear layout of width at most $p$ whose first vertex is $a_1$, and 
	$B_n$ has a linear layout of width at most $p$ whose last vertex is $b_n$.
	
	Assume for a moment that $n\ge 2$.
	 For each $i\in \{1, 2, \ldots, n-1\}$, 
	 let $b_i$ and $a_{i+1}$ be the marked vertices of $B_i$ and $B_{i+1}$, respectively, such that 
	 $b_ia_{i+1}$ is the marked edge connecting $B_i$ and $B_{i+1}$. 
	 If $b_i$ is not the end vertex of $L_{B_i}$, then we reorder $L_{B_i}$ so that $b_i$ is the end vertex.
	 Similarly, if $a_{i+1}$ is not the first vertex of $L_{B_{i+1}}$, then we reorder $L_{B_{i+1}}$ so that $a_{i+1}$ is the first vertex.
	 Until now, the width of each $L_{B_i}$ may increase by at most $2$.
	 This is because the rank of a matrix increase by at most $1$ when we move one element in the column indices (resp. the row indices) to the row indices (resp. the column indices).	

	 Note that the resulting decomposition is not necessarily canonical, as we may add a twin of a vertex in a prime graph.
	 But this is not a problem when we apply Lemma~\ref{lem:genupperbound}.
	 By the above modification, we know that for each $i\in \{1, 2, \ldots, n\}$, 
	 there is a linear layout of $B_i$ of width at most $p+2$ whose first and last vertices are $a_i$ and $b_i$, respectively.

	\begin{figure}
\centerline{
\includegraphics[scale=0.6]{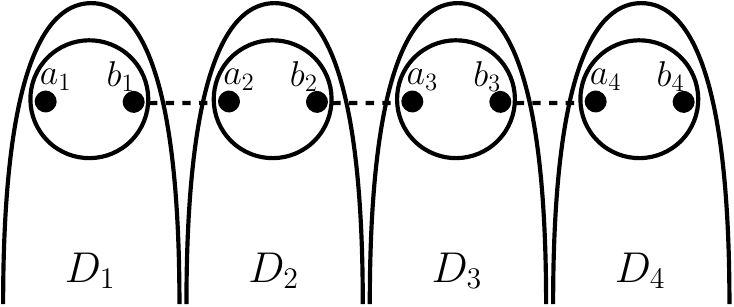}}
\caption{The sequence of sub-decompositions $D_1, \ldots, D_n$ in Proposition~\ref{prop:generalupperbound}. }
\label{fig:prop41}
\end{figure}

 We define the following sub-decompositions. See Figure~\ref{fig:prop41} for an illustration.
 If $n=1$, then let $D_1:=D$.
 Otherwise, 
 \begin{enumerate}
 \item let $D_1$ be the connected component of $D- V(B_2)$ containing  $B_1$,
 \item let $D_{n}$ be the connected component of $D- V(B_{n-1})$ containing $B_{n}$, and
 \item for each $i\in \{2, 3, \ldots, n-1\}$,
 let $D_i$ be the connected component of $D- (V(B_{i-1})\cup V(B_{i+1}))$ containing $B_i$. 
 \end{enumerate}
 We regard the vertices $a_i$ and $b_i$ as unmarked vertices of $D_i$.

Recall that $\pw(T)\le k-1$ for every node $v$ of $P$ and every connected component $T$ of $T_D- v$ not intersecting $P$. 
 Therefore, $\lrw(\origin{H})\le 2(p+2)k$, for each connected component $H$ of $D_i-V(B_i)$, by induction hypothesis.
   Thus, by Lemma~\ref{lem:genupperbound}, 
  $\origin{D_i}$ has a linear layout $L_i$ of width at most $2(p+2)+2(p+2)k=2(p+2)(k+1)$ 
  whose first and last vertices are $a_i$ and $b_i$, respectively.  For each $i\in \{1, 2, \ldots, n\}$, 
  let $L_i'$ be the linear layout obtained from $L_i$
 by removing $a_i$ and $b_i$.  
 Then it is not hard to check that
 \[ L'_1 \oplus L'_2\oplus \cdots \oplus L'_{n} \] is a linear layout of $G$ having width at most $2(p+2)(k+1)$.  We conclude that $\lrw(G) \le 2(p+2)(\pw(T_D)+1)$.
\end{proof}

For distance-hereditary graphs, the following establishes a lower bound and the tight upper bound of linear rank-width with respect to the path-width of their canonical split decompositions.

\begin{proposition}\label{prop:lrwpw}
  Let $D$ be the canonical split decomposition of a connected distance-hereditary graph $G$.
  Then $\frac{1}{2}\pw(T_D) \le \lrw(G) \le \pw(T_D)+1$.
\end{proposition}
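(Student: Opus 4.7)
The plan is to prove both inequalities by inductions that mirror the parallel structure of Theorems~\ref{thm:maintreeforpathwidth} and \ref{thm:mainchap2}. For the upper bound $\lrw(G)\le\pw(T_D)+1$, I induct on $k=\pw(T_D)$: Theorem~\ref{thm:maintreeforpathwidth} yields a path $P$ in $T_D$ such that for every node $v$ of $P$, every component of $T_D\setminus v$ not containing a node of $P$ has path-width at most $k-1$. For each such component $T_C$, corresponding to a component $C$ of $D\setminus V(\bag{D}{v})$, and any unmarked vertex $y\in V(C)$, the decomposition tree $T_{\limbtil_D[\bag{D}{v},y]}$ is obtained from $T_C$ by the absorption step, which either leaves $T_C$ unchanged or deletes one or two bag-nodes; in every case it is a minor of $T_C$, so $\pw(T_{\limbtil_D[\bag{D}{v},y]})\le\pw(T_C)\le k-1$. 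Since $\limbhat_D[\bag{D}{v},y]$ is distance-hereditary (as a vertex-minor of $G$), the induction hypothesis yields $\lrw(\limbhat_D[\bag{D}{v},y])\le\pw(T_{\limbtil_D[\bag{D}{v},y]})+1\le k$, hence $f_D(\bag{D}{v},C)\le k$. Applying Theorem~\ref{thm:mainchap2} with the same path $P$ delivers $\lrw(G)\le k+1$.

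For the lower bound $\pw(T_D)\le 2\lrw(G)$, I induct on $k=\lrw(G)$ (equivalently, on $\abs{V(G)}$). Theorem~\ref{thm:mainchap2} supplies a path $P$ in $T_D$ with $f_D(\bag{D}{v},C)\le k-1$ on every off-$P$ component, and the induction hypothesis applied to each distance-hereditary limb graph yields $\pw(T_{\limbtil_D[\bag{D}{v},y]})\le 2(k-1)=2k-2$. The proof reduces to the key comparison $\pw(T_C)\le\pw(T_{\limbtil_D[\bag{D}{v},y]})+1$: once this is established, every off-$P$ subtree of $T_D$ has path-width at most $2k-1$, and Theorem~\ref{thm:maintreeforpathwidth} applied along $P$ gives $\pw(T_D)\le 2k$.

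The main obstacle is therefore proving $\pw(T_C)\le\pw(T_{\limbtil_D[\bag{D}{v},y]})+1$ in every absorption case. Three of the four cases are routine: when no absorption occurs $T_{\limbtil}$ equals $T_C$; when the size-two bag has a single adjacent bag, $T_{\limbtil}$ equals $T_C$ minus a leaf; and when it has two adjacent bags without a subsequent recomposition, $T_{\limbtil}$ is obtained by smoothing a degree-two node of $T_C$; in each, path-width changes by at most one. The delicate case is recomposition, where $T_{\limbtil}$ is obtained from $T_C$ by contracting a path $b_1-b_v-b_2$ (with $b_v$ of degree two) to a single node $n^*$. I handle this by induction on $\abs{V(T_C)}$, using Theorem~\ref{thm:maintreeforpathwidth}: take a path $Q$ in $T_{\limbtil}$ whose off-$Q$ subtrees have path-width at most $\pw(T_{\limbtil})-1$. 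If $n^*\notin V(Q)$, the path $Q$ sits in $T_C$ unchanged; the unique off-$Q$ subtree of $T_C$ containing the expansion is obtained from its $T_{\limbtil}$-counterpart by the same path-contraction, hence has path-width at most $\pw(T_{\limbtil})$ by the inductive hypothesis, while every other off-$Q$ subtree is unchanged. If $n^*\in V(Q)$, I extend $Q$ through the expansion to a path $Q'$ in $T_C$ (routing through $b_1$, $b_v$, $b_2$ whenever both $Q$-neighbours of $n^*$ lie on opposite sides of the split, and otherwise re-routing through only $b_1$ or $b_2$, with $b_v$ and the other endpoint becoming a pendant attachment). No off-$Q'$ subtree appears at $b_v$, and at $b_1, b_2$ the off-$Q'$ subtrees are either inherited from off-$Q$ subtrees at $n^*$ of path-width at most $\pw(T_{\limbtil})-1$, or a pendant extension whose path-width is bounded by $\pw(T_{\limbtil})$ through one more application of Theorem~\ref{thm:maintreeforpathwidth}. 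Either way, $T_C$ admits a witnessing path with offshoots of path-width at most $\pw(T_{\limbtil})$, so $\pw(T_C)\le\pw(T_{\limbtil})+1$, completing the proof.
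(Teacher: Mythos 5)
Your proposal is correct, and its skeleton coincides with the paper's: both inequalities are proved by inductions that pair Theorem~\ref{thm:maintreeforpathwidth} with Theorem~\ref{thm:mainchap2}, transferring a witnessing path between $T_D$ and the decomposition, and both directions hinge on comparing $\pw(T_C)$ with the path-width of the decomposition tree of the canonical limb. The genuine difference is how that comparison is established. The paper isolates two small lemmas about path decompositions of arbitrary graphs (Lemma~\ref{lem:decpw}: contracting an edge decreases path-width by at most one; Lemma~\ref{lem:decpw2}: the same for contracting the two edges at a degree-two vertex with non-adjacent neighbours) and then notes that every absorption step --- deleting a leaf node, smoothing a degree-two node, or the recomposition that collapses the path $b_1$--$b_v$--$b_2$ to a single node --- is exactly such a contraction, so $\pw(T_C)\le \pw(T_{\limbtil})+1$ follows in one line. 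You instead prove the recomposition case purely inside trees, by induction on $\abs{V(T_C)}$ via the path characterization in Theorem~\ref{thm:maintreeforpathwidth}(3), with a case analysis on whether the witnessing path passes through the contracted node $n^*$; your analysis (including the re-routing when both path-neighbours of $n^*$ attach to the same side, leaving a pendant subtree of path-width at most $\pw(T_{\limbtil})$) is sound, but it is longer than the paper's explicit bag surgery, which also has the advantage of applying to arbitrary graphs rather than only trees. A second, minor divergence: in the upper bound the paper applies the induction hypothesis to $\origin{C}$ (whose decomposition tree is $T_C$) and then uses that the limb is a vertex-minor of it, whereas you apply it directly to the limb graph after observing that $T_{\limbtil}$ is a minor of $T_C$; both are valid. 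You leave the base cases $k\le 1$ of the lower-bound induction implicit (the paper settles $k=1$ via Theorem~\ref{thm:charlrw1}), but that is a routine omission rather than a gap.
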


The upper bound part is tight. For instance, every complete graph with at least two vertices has linear rank-width $1$ and the path-width of its decomposition tree has path-width $0$. Also, for each
odd integer $k=2n+1$ with $n\ge 1$, every complete binary tree of height $k$ (each path from a leaf to the root has distance $k$) has linear rank-width $\lceil k/2 \rceil=n+1$, and its decomposition
tree has path-width $\lceil (k-1)/2 \rceil=n$. (Note that the linear rank-width and the path-width of a tree are the same~\cite{AdlerK13}.) We will need the following lemmas.

\begin{lemma}\label{lem:decpw}
  Let $G$ be a graph and let $uv\in E(G)$.  Then $\pw(G)\le \pw(G/uv )+1$.
\end{lemma}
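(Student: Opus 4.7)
The plan is to take an optimal path decomposition of $G/uv$ and lift it to a path decomposition of $G$ by splitting the contracted vertex back into $u$ and $v$, paying a cost of at most one in the bag size.

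More precisely, let $w$ denote the vertex of $G/uv$ obtained by contracting the edge $uv$ (so $V(G/uv) = (V(G)\setminus\{u,v\}) \cup \{w\}$). Let $(P, \mathcal{B})$ with $\mathcal{B} = (B_t)_{t \in V(P)}$ be a path decomposition of $G/uv$ of width $\pw(G/uv)$. Define a new family $\mathcal{B}' = (B'_t)_{t \in V(P)}$ on the same path $P$ by
\[
B'_t := \begin{cases} (B_t \setminus \{w\}) \cup \{u, v\} & \text{if } w \in B_t,\\ B_t & \text{otherwise.}\end{cases}
\]
I then check the three axioms of a path decomposition for $(P, \mathcal{B}')$. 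Coverage of vertices is immediate: every $x \in V(G)\setminus\{u,v\}$ lies in some $B_t$ hence in $B'_t$, and since $w$ belongs to at least one $B_t$, both $u$ and $v$ belong to the corresponding $B'_t$. For edges $xy \in E(G)$ with $\{x,y\} \cap \{u,v\} = \emptyset$, some bag of $\mathcal{B}$ contains $\{x,y\}$ and is preserved. For an edge $ux \in E(G)$ with $x \notin \{u,v\}$, we have $wx \in E(G/uv)$, so some $B_t$ contains $\{w,x\}$, and then $\{u,v,x\} \subseteq B'_t$. The symmetric argument handles edges of the form $vx$. The edge $uv$ itself is covered by any $B'_t$ arising from a bag containing $w$. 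Finally, the connectedness condition holds because for every vertex $x \neq u, v$ the set $\{t : x \in B'_t\} = \{t : x \in B_t\}$ is connected in $P$, while $\{t : u \in B'_t\} = \{t : v \in B'_t\} = \{t : w \in B_t\}$ is connected by the property of $\mathcal{B}$.

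For the width, observe that $|B'_t| \leq |B_t| + 1$ for every $t$ (with equality only when $w \in B_t$), so
\[
\w(P, \mathcal{B}') = \max_t |B'_t| - 1 \leq \max_t |B_t| = \pw(G/uv) + 1,
\]
which gives $\pw(G) \leq \w(P, \mathcal{B}') \leq \pw(G/uv) + 1$. No step is genuinely an obstacle; the only point requiring care is to remember to put \emph{both} $u$ and $v$ (not just one of them) in every bag previously containing $w$, which is precisely what forces the $+1$ in the width and what makes all incidences of $u$ and $v$ simultaneously representable.
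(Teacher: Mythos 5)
Your proposal is correct and follows essentially the same route as the paper: take an optimal path decomposition of $G/uv$ and reinsert both endpoints of the contracted edge into every bag containing the contracted vertex, which raises the width by at most one. The only cosmetic difference is that you treat the contracted vertex as a fresh vertex $w$ rather than identifying it with one endpoint as the paper's definition of contraction does, and you verify the three axioms explicitly where the paper leaves the check to the reader.
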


\begin{proof}
  Let $(P, \mathcal{B})$ be an optimal path-decomposition of $G/uv$, 
  and let $z$ be the contracted vertex in $G/uv$.  
  It is not hard to check that a new path-decomposition obtained by removing $z$ 
  and adding $u$ and $v$ in each bag
  containing $z$ is a path-decomposition of $G$.  
  We conclude that $\pw(G)\le \pw(G/uv)+1$.
\end{proof}

\begin{lemma}\label{lem:decpw2} Let $G$ be a graph. Let $u$ be a vertex of degree $2$ in $G$ such that $v_1, v_2$ are the neighbors of $u$ in $G$ and $v_1v_2\notin E(G)$.  Then $\pw(G)\le
\pw(G/uv_1/uv_2 )+1$.
\end{lemma}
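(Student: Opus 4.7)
The plan is to take an optimal path decomposition $(P,\mathcal{B})$ of $H:=G/uv_1/uv_2$ of width $k:=\pw(H)$ and modify it locally around the ``contracted vertex'' $w$ (the merger of $u$, $v_1$, $v_2$) to produce a path decomposition of $G$ of width at most $k+1$. The starting observation is that, since $u$ has degree exactly $2$ with $N_G(u)=\{v_1,v_2\}$ and $v_1v_2\notin E(G)$, the neighborhood of $w$ in $H$ equals $(N_G(v_1)\cup N_G(v_2))\setminus\{u\}$; in particular, every edge of $G$ not incident to $u$ corresponds to an edge of $H$ incident to $w$. Naively applying Lemma~\ref{lem:decpw} twice along the chain $G\to G/uv_1\to H$ yields only the bound $\pw(G)\le \pw(H)+2$, so the gain of one must come from exploiting the fact that $u$'s only neighbors are $v_1$ and $v_2$.

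I would then identify the non-empty, connected sub-path $I=(B_{t_1},\dots,B_{t_m})$ of $(P,\mathcal{B})$ whose bags contain $w$, assuming without loss of generality that no two adjacent bags of $(P,\mathcal{B})$ are equal (merging duplicates). The construction splits into two cases according to whether $m=1$ or $m\ge 2$. If $m=1$, writing $B:=B_{t_1}$, all neighbors of $w$ in $H$ already lie in $B$, and I replace $B$ in $P$ with two consecutive bags $B':=(B\setminus\{w\})\cup\{v_1,u\}$ and $B'':=(B\setminus\{w\})\cup\{v_2,u\}$. Edges of $G$ involving $v_1$ (other than $uv_1$) are covered inside $B'$, those involving $v_2$ inside $B''$, and $uv_1,uv_2$ are covered by $B',B''$ themselves; interval connectedness is automatic because $B\setminus\{w\}\subseteq B'\cap B''$. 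Each new bag has size at most $|B|+1\le k+2$.

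If $m\ge 2$, I would replace each $B_{t_i}$ with $B^*_{t_i}:=(B_{t_i}\setminus\{w\})\cup\{v_1,v_2\}$ and insert between $B^*_{t_1}$ and $B^*_{t_2}$ a new bag $M:=\{u,v_1,v_2\}\cup((B_{t_1}\cap B_{t_2})\setminus\{w\})$. The edges $v_iy$ for $y\in N_H(w)$ are covered in the appropriate $B^*_{t_j}$, while $uv_1,uv_2$ are covered by $M$. The main technical point---and where I expect to have to be most careful---is verifying interval connectedness after inserting $M$: any vertex $z\in (B_{t_1}\cap B_{t_2})\setminus\{w\}$ sits in both $B^*_{t_1}$ and $B^*_{t_2}$, so it must also sit in the intervening bag $M$; the definition of $M$ is engineered precisely so that this holds. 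The size bound $|M|\le k+2$ then follows from $|B_{t_1}\cap B_{t_2}|\le k$, which in turn follows from the WLOG assumption $B_{t_1}\ne B_{t_2}$ combined with $|B_{t_1}|,|B_{t_2}|\le k+1$. Combining the two cases yields a path decomposition of $G$ of width at most $k+1$, giving $\pw(G)\le \pw(G/uv_1/uv_2)+1$ as required.
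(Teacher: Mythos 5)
Your proof is correct and takes essentially the same route as the paper: after normalizing an optimal path decomposition of $G/uv_1/uv_2$ so that no two consecutive bags are equal, you split the unique bag containing the contracted vertex $w$ into the two bags $(B\setminus\{w\})\cup\{u,v_1\}$ and $(B\setminus\{w\})\cup\{u,v_2\}$ (your $m=1$ case, using $v_1v_2\notin E(G)$), and otherwise insert $\bigl((B_{t_1}\cap B_{t_2})\setminus\{w\}\bigr)\cup\{u,v_1,v_2\}$ between two adjacent bags of $w$'s interval (your $m\ge 2$ case). These are exactly the bags constructed in the paper's two cases, with the same size bounds, so nothing further is needed.
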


\begin{proof} Let $w$ be the contracted vertex in $G/uv_1/uv_2$, and let $(P, \mathcal{B})$ be an optimal path-decomposition of $G/uv_1/uv_2$ of width $t:=\pw(G/uv_1/uv_2)$.  We may assume that no two adjacent bags in $(P, \mathcal{B})$ are equal.
  
  We obtain a path-decomposition $(P, \mathcal{B}')$ from $(P, \mathcal{B})$ by replacing $w$ with $v_1$ and $v_2$ in all bags containing $w$.  Since no two adjacent bags in $(P, \mathcal{B})$ are equal, no two adjacent bags in $(P, \mathcal{B}')$ are equal.
  
  We first assume that there are two adjacent bags $B_1$ and $B_2$ in $(P, \mathcal{B}')$ containing both $v_1$ and $v_2$, respectively.  We obtain a path-decomposition $(P', \mathcal{B}'')$ from $(P,
  \mathcal{B}')$ by subdividing the edge between $B_1$ and $B_2$, and adding a new bag $B'=(B_1\cap B_2) \cup \{u\}$.  Since $B_1$ and $B_2$ are not the same, $\abs{B_1\cap B_2}\le t+1$ and therefore,
  $\abs{B'}\le t+2$.  Thus, $(P', \mathcal{B}'')$ is a path-decomposition of $G$ of width at most $t+1$, and $\pw(G)\le \pw(G/uv_1/uv_2)+1$.
  
  Now we may assume that there is only one bag $B$ in $(P, \mathcal{B}')$ containing both $v_1$ and $v_2$.  In this case, since $v_1v_2\notin E(G)$, we can obtain a path decomposition of $G$ by replacing
  this bag $B$ with a sequence of two bags $B_1$ and $B_2$, where $B_1:=B\setminus \{v_2\} \cup \{u\}$ and $B_2:=B\setminus \{v_1\} \cup \{u\}$.  This implies that $\pw(G)\le \pw(G/uv_1/uv_2)+1$.
\end{proof}

We are now ready to prove Proposition \ref{prop:lrwpw}. We need the split decomposition characterization of graphs of linear rank-width at most $1$ proved by Bui-Xuan, Kant\'{e}, and Limouzy~\cite{Bui-XuanKL13} for the base case, which can be easily obtained by Theorem~\ref{thm:mainchap2}. We give a proof of this characterization in Theorem~\ref{thm:charlrw1}.

\begin{proof}[Proof of Proposition \ref{prop:lrwpw}] 
	  (1)~Let us first prove that $\pw(T_D)\leq 2\lrw(G)$ by induction on the linear rank-width of $G$.
	  Let $k:=\lrw(G)$.   
	  If $k=0$, then $G$ consists of a vertex, and $\pw(T_D)=0$.  
	  If $k=1$, then by Theorem~\ref{thm:charlrw1}, $T_D$ is a path and we have $\pw(T_D)\le 1\le 2k$.  
	  Thus, we may assume that $k\ge 2$.
	  By Theorem~\ref{thm:mainchap2}, there exists a path $P$ in $T_D$ such that 
	  \begin{itemize}
	  \item for every node $v$ in $P$ and every connected component $H$ of $D- V(\bag{D}{v})$ containing no bag in $\{\bag{D}{w}\mid w\in V(P)\}$, $f_D(\bag{D}{v}, H)\le k-1$. 
  \end{itemize}
	  Let $v$ be a node of $P$ and $C$ be a connected component of $D-V(\bag{D}{v})$ 
	  containing no bag $\bag{D}{w}$ with $w\in V(P)$. 
  	Let $y$ be an unmarked vertex of $C$ represented by $\zeta_c(D,\bag{D}{v},C)$, and let $L:=\limbtil_D[V(\bag{D}{v}),y]$. 
	By 	induction hypothesis, the decomposition tree $T_{L}$ of $L$ has path-width at most
  	$2k-2$.  %
  	We claim that $\pw(T_C)\le 2k-1$, where $T_C$ is the decomposition tree of $C$.  By the definition of canonical limbs, 
	either $T_L=T_C$ or $T_{L}$ is obtained from $T_C$ using one of the following operations:
  \begin{enumerate}
  \item Removing a node of degree $1$.
  \item Removing a node of degree $2$ with its neighbors $v_1, v_2$ and adding an edge $v_1v_2$.
  \item Removing a node of degree $2$ with its neighbors $v_1, v_2$ and identifying $v_1$ and $v_2$.
  \end{enumerate} 

  	The first two cases can be regarded as contracting one edge.  So, $\pw(T_C)\le \pw(T_{L})+1\le (2k-2)+1=2k-1$ 
	by Lemma~\ref{lem:decpw}.  The last case corresponds to contracting two edges incident with a vertex of degree $2$.  
  	By Lemma~\ref{lem:decpw2}, $\pw(T_C)\le \pw(T_{L})+1\le 2k-1$.

  	Therefore, for each node $v$ of $P$ and each connected component $T'$ of $T_D-v$ not containing a node of $P$ 
	we have that $\pw(T')\leq 2k-1$. By Theorem~\ref{thm:maintreeforpathwidth}, 
	$T_D$ has path-width at most $2k$, as required.

\medskip

	  (2)~We prove that $\lrw(G)\leq \pw(T_D)+1$ by induction on the path-width of $T_D$.
	  Let $k:=\pw(T_D)$   
	  If $k=0$, then $T_D$ consists of one node.
	  Since $G$ is distance-hereditary, $G$ should be a star or a complete graph, and therefore, 
	  we have $\lrw(G)\le 1=\pw(T_D)+1$.  
	  We may assume that $k\ge 1$.
	
  	 By Theorem~\ref{thm:maintreeforpathwidth}, 
	 there exists a path $P=v_0v_1 \cdots v_nv_{n+1}$ in $T_D$ such that for every node $v$ in $P$ and 
	 every connected component $F$ of $T_D- v$ containing no nodes of $P$, $\pw(F)\le k-1$.  
	Let $v$ be a node of $P$ and let $C$ be 
	a connected component of $D- V(\bag{D}{v})$ conaining no bags $\bag{D}{w}$ with  $w\in V(P)$.  
	By induction hypothesis, $\origin{C}$ has linear rank-width at most $(k-1)+1=k$. 
	By the definition of limbs, we conclude that $f_D(\bag{D}{v},C) \leq k$.  
	Thus, by Theorem~\ref{thm:mainchap2}, we conclude that $\lrw(G)\le k+1$.
\end{proof}

We could not confirm that the lower bound in Proposition~\ref{prop:lrwpw} is tight. We leave the following as an open question.  
\begin{QUE}
  Let $D$ be the canonical split decomposition of a connected distance-hereditary graph $G$.
  Is it true that $\pw(T_D) \le \lrw(G)$?
\end{QUE}

\section{Containing a tree as a vertex-minor}\label{sec:treevertexminor}

In this section, we prove our first main result.

\begin{THMMAIN}
  Let $p$ be a positive integer and let $T$ be a tree. Let $G$ be a graph such that every prime induced subgraph of $G$ has linear rank-width at most $p$.
  If $\lrw(G) \ge 40(p+2)\abs{V(T)}$, then $G$ contains a vertex-minor isomorphic to $T$.	
\end{THMMAIN}

	To prove it, we observe that the decomposition tree of the canonical split decomposition of $G$ has large path-width using Theorem~\ref{prop:generalupperbound}. 
	The main argument of this section is that  
	if $G$ admits a canonical split decomposition whose decomposition tree has sufficiently large path-width, 
	then $G$ contains a vertex-minor isomorphic to $T$.

	We first prove that every tree is a vertex-minor of some subcubic tree having slightly more vertices.
	For a tree $T$, we denote by $\phi(T)$ the sum of the degrees of vertices of $T$ whose degrees are
	at least $4$.  Every subcubic tree $T$ satisfies that $\phi(T)=0$.

\begin{figure}[t]\centering
\tikzstyle{v}=[circle, draw, solid, fill=black, inner sep=0pt, minimum width=2.5pt]
\tikzset{photon/.style={decorate, decoration={snake}}}
\begin{tikzpicture}[scale=0.4]

\node [v] (a) at (0,0) {};
\node [v] (b) at (2,3) {};
\node [v] (c) at (-2,3) {};
\node [v] (d) at (-4,0) {};
\node [v] (e) at (-2,-3) {};
\node [v] (f) at (2,-3) {};
\node [v] (g) at (4,0) {};

\draw (1,0.5) node{$v$};

\draw (4,0.7) node{$v_1$};
\draw (2,3.7) node{$v_2$};
\draw (-2,3.7) node{$v_3$};
\draw (-4,0.7) node{$v_4$};
\draw (-2.2,-2.3) node{$v_5$};
\draw (2.2,-2.3) node{$v_6$};

\draw (a)--(b);
\draw (a)--(c);
\draw (a)--(d);
\draw (a)--(e);
\draw (a)--(f);
\draw (a)--(g);

\end{tikzpicture}\qquad\quad
\begin{tikzpicture}[scale=0.4]

\node [v] (a) at (0,0) {};
\node [v] (b) at (10,3) {};
\node [v] (c) at (-2,3) {};
\node [v] (d) at (-4,0) {};
\node [v] (e) at (-2,-3) {};

\node [v] (f) at (2,-3) {};
\node [v] (g) at (12,0) {};

\node [v] (p1) at (4,0) {};
\node [v] (p2) at (8,0) {};

\draw (1,0.5) node{$v$};

\draw (12,0.7) node{$v_1$};
\draw (10,3.7) node{$v_2$};
\draw (-2,3.7) node{$v_3$};
\draw (-4,0.7) node{$v_4$};
\draw (-2.2,-2.3) node{$v_5$};
\draw (2.2,-2.3) node{$v_6$};

\draw (4,0.7) node{$p_2$};
\draw (7.8,0.7) node{$p_1$};

\draw (p2)--(b);
\draw (a)--(c);
\draw (a)--(d);
\draw (a)--(e);
\draw (a)--(f);
\draw (a)--(p1)--(p2)--(g);

\end{tikzpicture}
\caption{Splitting an edge in Lemma~\ref{lem:subcubicpivot1}.}\label{fig:splitting}
\end{figure}
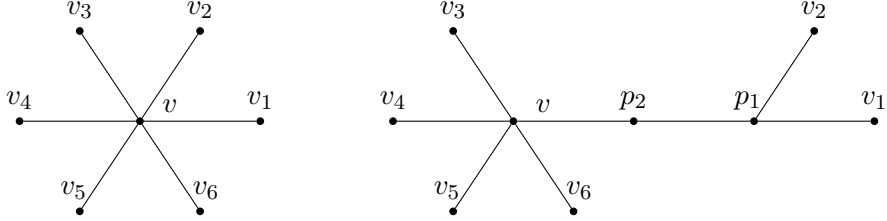

\begin{lemma}\label{lem:subcubicpivot1}
  Let $k$ be a positive integer and let $T$ be a tree with $\phi(T)=k$.  Then $T$ is a vertex-minor of a tree $T'$ with $\phi(T')=k-1$ and $\abs{V(T')}=\abs{V(T)}+2$.
\end{lemma}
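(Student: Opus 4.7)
Since $\phi(T)=k\ge 1$, the tree $T$ contains at least one vertex of degree at least $4$. Pick any such vertex $v$, and let $v_1,v_2$ be two arbitrary neighbors of $v$. Following the construction suggested by Figure~\ref{fig:splitting}, I define $T'$ by removing the two edges $vv_1$ and $vv_2$ from $T$, introducing two fresh vertices $p_1,p_2$, and adding the four edges $vp_1$, $p_1p_2$, $p_2v_1$, and $p_2v_2$.

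First I will verify that $T'$ is a tree. Removing $vv_1$ and $vv_2$ disconnects $T$ into three components: the component $T_0$ containing $v$ and the subtrees $T_1,T_2$ rooted at $v_1,v_2$. The path $v\mathrel{-}p_1\mathrel{-}p_2$ together with the edges $p_2v_1$ and $p_2v_2$ reattaches $T_1$ and $T_2$ to $T_0$ through $p_1,p_2$ without creating a cycle, and the vertex/edge counts work out to $\abs{V(T)}+2$ vertices and $\abs{V(T)}+1$ edges. In particular $\abs{V(T')}=\abs{V(T)}+2$.

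Next I will check that $\phi(T')=\phi(T)-1$. The only vertices whose degrees change between $T$ and $T'$ are $v$, $p_1$, and $p_2$; concretely $\deg_{T'}(v)=\deg_T(v)-1$, $\deg_{T'}(p_1)=2$, and $\deg_{T'}(p_2)=3$. Since $\deg_{T'}(p_1),\deg_{T'}(p_2)<4$, the new vertices contribute nothing to $\phi(T')$. The contribution of $v$ drops by exactly one (and indeed the construction is set up so that $p_2$ has degree $3$, not $4$, precisely so that no new degree-$\ge 4$ vertex is introduced), so $\phi(T')=\phi(T)-1$.

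Finally I will exhibit $T$ as a vertex-minor of $T'$ by pivoting on the marked edge $p_1p_2$ and then deleting $p_1,p_2$. In $T'$ we have $N_{T'}(p_1)=\{v,p_2\}$ and $N_{T'}(p_2)=\{p_1,v_1,v_2\}$, so using the pivot formula of Section~\ref{subsec:lrw-vm} with $x=p_1,y=p_2$ we compute $W_1=\emptyset$, $W_2=\{v\}$, and $W_3=\{v_1,v_2\}$. Pivoting complements the adjacencies between $W_2$ and $W_3$; since $vv_1,vv_2\notin E(T')$, these become edges in $T'\wedge p_1p_2$, while all other adjacencies in the tree are preserved (the swap of $p_1,p_2$ is merely a relabeling of vertices that we will delete anyway). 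Deleting $p_1$ and $p_2$ from $T'\wedge p_1p_2$ therefore yields a graph whose edge set is exactly $E(T)$, so $T$ is a vertex-minor of $T'$. The main subtlety in the argument is the degree bookkeeping for $\phi$, for which the specific choice of rerouting exactly two neighbors of $v$ (rather than one or three) is essential.
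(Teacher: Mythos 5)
Your construction is, up to swapping the labels of $p_1$ and $p_2$, exactly the paper's: split two of $v$'s edges off onto a new degree-$3$ vertex attached through a degree-$2$ vertex, then recover $T$ via the pivot $T'\wedge p_1p_2$ followed by deleting $p_1,p_2$, with the same degree bookkeeping for $\phi$; so this is essentially the same proof and it is correct to the same standard as the paper's. Note only that your claim that the contribution of $v$ drops by exactly one (hence $\phi(T')=k-1$) shares the paper's own slight imprecision: when $\deg_T(v)=4$ the vertex $v$ drops out of the sum entirely, giving $\phi(T')=k-4\le k-1$, which is harmless for the intended application in the following lemma.
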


\begin{proof}
	  Since $\phi(T)\ge 1$, $T$ has a vertex of degree at least $4$.  
	  Let $v\in V(T)$ be a vertex of degree at least $4$, and let $v_1, v_2, \ldots, v_m$ be its neighbors.  We obtain $T'$ from $T$ by
  	replacing the edge $vv_1$ with the path $vp_2p_1v_1$, removing $vv_2$ and adding an edge between $p_1$ and $v_2$.  
	It is easy to verify that $(T'\wedge p_1p_2)- \{p_1, p_2\}=T$.  
	We depict this procedure in Figure~\ref{fig:splitting}. 
	We observe that $p_1$ and $p_2$ are vertices of degree at most $3$ in $T'$, 
	and the degree of $v$ in $T'$ is one less than the degree of $v$ in $T$. 
	Therefore, we have $\phi(T')=k-1$.
\end{proof}	

\begin{lemma}\label{lem:subcubicpivot2}
  Every tree $T$ is a vertex-minor of a subcubic tree $T'$ with $\abs{V(T')}\le 5\abs{V(T)}$.
\end{lemma}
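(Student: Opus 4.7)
The plan is to iterate Lemma~\ref{lem:subcubicpivot1} starting from $T$, reducing the quantity $\phi$ one step at a time until it reaches zero. Let $T_0 := T$ with $\phi(T_0) = k$. Lemma~\ref{lem:subcubicpivot1} produces a tree $T_1$ with $\phi(T_1) = k-1$, $|V(T_1)| = |V(T_0)| + 2$, and $T_0$ a vertex-minor of $T_1$. Iterating gives a sequence $T_0, T_1, \ldots, T_k$ where $\phi(T_k) = 0$ (hence $T_k$ is subcubic), each $T_i$ is a vertex-minor of $T_{i+1}$, and $|V(T_k)| = |V(T)| + 2k$. Since the vertex-minor relation is transitive (it is the transitive closure of local complementations and vertex deletions), $T$ is a vertex-minor of $T' := T_k$.

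It remains to bound $k = \phi(T)$ in terms of $|V(T)|$. The handshake lemma gives
\[
\phi(T) = \sum_{v \in V(T),\, \deg_T(v) \ge 4} \deg_T(v) \le \sum_{v \in V(T)} \deg_T(v) = 2|E(T)| = 2(|V(T)|-1).
\]
Therefore
\[
|V(T')| = |V(T)| + 2\phi(T) \le |V(T)| + 4(|V(T)|-1) = 5|V(T)| - 4 \le 5|V(T)|,
\]
which is the required bound.

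There is really no obstacle here: the work is already done in Lemma~\ref{lem:subcubicpivot1}, and the present statement is a bookkeeping consequence combining the per-step vertex cost $2$ with the elementary degree-sum bound on $\phi(T)$. The only thing to check carefully is that an application of Lemma~\ref{lem:subcubicpivot1} after several previous applications is still legitimate, which is immediate since at each intermediate stage $T_i$ is a tree with $\phi(T_i) > 0$ whenever $i < k$, so the lemma's hypothesis is satisfied.
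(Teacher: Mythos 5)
Your proof is correct and follows essentially the same route as the paper: iterate Lemma~\ref{lem:subcubicpivot1} until $\phi$ reaches zero (gaining two vertices per step, using transitivity of the vertex-minor relation) and then bound $\phi(T)$ by the degree sum $2\abs{E(T)}\le 2\abs{V(T)}$. The paper compresses the iteration into a single sentence; you merely make the induction and the transitivity explicit, and your bound $2(\abs{V(T)}-1)$ is a marginally tighter version of the same estimate.
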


\begin{proof}
  By Lemma~\ref{lem:subcubicpivot1}, $T$ is a vertex-minor of a subcubic tree $T'$ with $\abs{V(T')}\le \abs{V(T)}+2\phi(T)$.  Since $\phi(T)\le 2\abs{E(T)}\le 2\abs{V(T)}$, we conclude that
  $\abs{V(T')}\le \abs{V(T)}+2\phi(T)\le 5\abs{V(T)}$.
\end{proof}

	We recall that by (2) of Theorem~\ref{thm:Bouchet88}, a connected
	graph is a tree if and only if every bag of its canonical split decomposition is a star bag whose center is an unmarked vertex.  
	The basic strategy is to extract the canonical split decomposition of a subcubic tree from the canonical split decomposition of $G$.
	To do this, we will obtain a star from each prime bag, without changing too much the shape of the obtained canonical split decomposition. 
	Lemma~\ref{lem:findingpathinprime} describes how to obtain a star from a prime graph as a vertex-minor, 
	without applying local complementations at some special vertices, which will correspond to marked vertices.	

	We observe that every prime graph on at least $5$ vertices is $2$-connected.
	This is because if a connected graph $G$ contains a cut vertex $v$ and $T_1, T_2, \ldots, T_m$ are connected components of $G-v$, 
	then $\left( V(T_1)\cup \{v\}, \bigcup_{j\in \{2, \ldots, m\}}V(T_j) \right)$ is a split of $G$.
	We use this observation in Lemma~\ref{lem:findingpathinprime}.

\begin{lemma}\label{lem:pathlength2}
Let $abc$ be an induced path in a $2$-connected graph $G$. 
By applying local complementations at vertices in $V(G)\setminus \{a,b\}$, 
we can obtain $G'$ locally equivalent to $G$ such that $G'[\{a,b,c\}]$ is a triangle.
\end{lemma}
\begin{proof}
As $b$ is not a cut vertex of $G$, 
there is a path from $a$ to $c$ in $G-b$.
Let $r_1r_2 \cdots r_s$ be the shortest path from $c=r_1$ to $a=r_s$ in $G-b$.
Note that $s\ge 3$ as $a$ is not adjacent to $c$. 
See Figure~\ref{fig:g2tog3} for an illustration.

\begin{figure}[t]\centering
\tikzstyle{v}=[circle, draw, solid, fill=black, inner sep=0pt, minimum width=2.5pt]
\tikzset{photon/.style={decorate, decoration={snake}}}
\begin{tikzpicture}[scale=0.4]

\node [v] (a) at (0,3) {};
\node [v] (c) at (-4,0) {};
\node [v] (d) at (-2,-3) {};
\node [v] (e) at (0,-3.5) {};
\node [v] (f) at (2,-3) {}; 
\node [v] (g) at (4,0) {};

\draw (0,3.7) node{$b$};
\draw (-4.5,-0.7) node{$r_1$};
\draw (-2,-3.7) node{$r_2$};
\draw (0,-4.2) node{$r_3$};
\draw (2,-3.7) node{$r_4$};
\draw (4.5,-0.7) node{$r_5$};

\draw (a)--(c)--(d)--(e)--(f)--(g)--(a);
\draw (a)--(d);
\draw (a)--(f);
\draw (a)--(g);

\draw (0,-7) node{$G[\{b, r_1, r_2, \ldots, r_5\}]$};

\end{tikzpicture}\qquad\quad
\begin{tikzpicture}[scale=0.4]

\node [v] (a) at (0,3) {};
\node [v] (c) at (-4,0) {};
\node [v] (d) at (-2,-3) {};
\node [v] (e) at (0,-3.5) {};
\node [v] (f) at (2,-3) {}; 
\node [v] (g) at (4,0) {};

\draw (0,3.7) node{$b$};
\draw (-4.5,-0.7) node{$r_1$};
\draw (-2,-3.7) node{$r_2$};
\draw (0,-4.2) node{$r_3$};
\draw (2,-3.7) node{$r_4$};
\draw (4.5,-0.7) node{$r_5$};

\draw (a)--(c)--(d)--(e)--(f)--(g)--(a);
\draw[dashed] (a)--(d);
\draw (a)--(f);
\draw (a)--(g);
\draw (c)--(e);

\draw (0,-7) node{$G[\{b, r_1, r_2, \ldots, r_5\}]*r_1*r_2$};

\end{tikzpicture}\caption{Reducing from $G[\{b, r_1, r_2, \ldots, r_s\}]$ in Lemma~\ref{lem:pathlength2}.}\label{fig:g2tog3}
\end{figure}
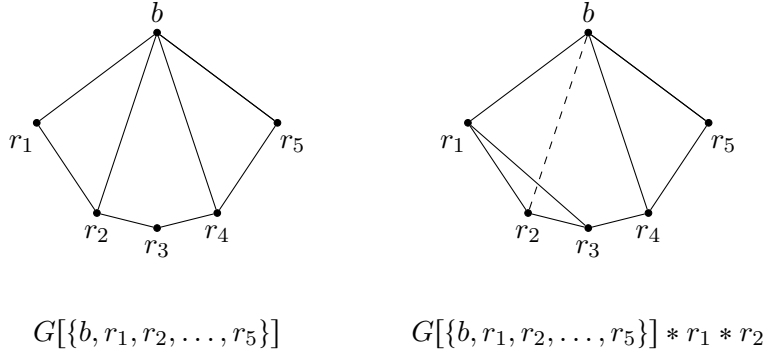

We prove by induction on $s$ that 
$G[\{b, r_1, r_2, \ldots, r_s\}]$ can be transformed into an induced path $acb$ by applying local complementations only at vertices in $\{r_1, r_2, \ldots, r_{s-1}\}$.
We illustrate this procedure in Figure~\ref{fig:g2tog3}.
Assume $s=3$. If $b$ is adjacent to $r_2$, then we remove this edge by applying a local complementation at $c=r_1$. And then 
we apply a local complementation at $r_2$ to create an edge between $a$ and $c$.
Then $abc$ becomes a triangle.

We assume $s\ge 4$. Similarly, 
if $b$ is adjacent to $r_2$, then we remove this edge by applying a local complementation at $c=r_1$, 
and then we apply a local complementation at $r_2$ to create an edge between $c$ and $r_3$.
If $b$ is not adjacent to $r_2$, then we apply a local complementation at $r_2$ to create an edge between $c$ and $r_3$.
Let $G_1$ be the resulting graph.
Then $r_1r_3r_4 \cdots r_s$ is an induced path in $G_1-b$.
Thus, by induction hypothesis, 
we can obtain $G_2$ locally equivalent to $G_1[\{b, r_1, r_3, r_4, \ldots, r_s\}]$ by applying local complementations only at vertices in $\{r_1, r_3, \ldots, r_{s-1}\}$
such that $G_2[\{a,b,c\}]$ is a triangle.
\end{proof}

\begin{lemma}\label{lem:findingpathinprime}
 Let $G$ be a prime graph on at least $5$ vertices, and let $a,b,c\in V(G)$.  
 Then there exists a sequence $x_1, x_2, \ldots, x_t$ of vertices in $V(G)\setminus \{a, b\}$ (not necessarily all distinct) such that $acb$ is an induced path of $G*x_1*x_2*\cdots *x_t$. 
\end{lemma}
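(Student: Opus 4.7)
My plan is to reduce the lemma to two steps: (1) finding an induced path from $a$ to $b$ passing through $c$ in some graph locally equivalent to $G$ via local complementations at vertices in $V(G) \setminus \{a, b\}$, and (2) shortening such an induced path to the target $a$-$c$-$b$ via further local complementations at internal vertices of the path.

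For step (2), let $P\colon a = x_0 - x_1 - \cdots - x_k = b$ be an induced path of length $k \ge 2$ with $c = x_j$ for some $1 \le j \le k-1$, and induct on $k$. The base cases $k \in \{2, 3\}$ are handled by at most one local complementation at the internal non-$c$ vertex of $P$, and the case $k = 4$ with $c = x_2$ is handled by the two local complementations at $x_1$ and $x_3$ in this order, which successively create the edges $ac$ and $cb$ without disturbing the other relevant (non-)edges. For $k \ge 5$, I pivot on the edge $x_1 x_2$, which is legal since $x_1, x_2 \neq a, b$: a direct computation of the sets $W_1 = N(x_1) \cap N(x_2)$, $W_2 = N(x_1) \setminus N(x_2) \setminus \{x_2\}$, and $W_3 = N(x_2) \setminus N(x_1) \setminus \{x_1\}$ restricted to $V(P)$ shows that $W_1 \cap V(P) = \emptyset$, $W_2 \cap V(P) = \{a\}$, and $W_3 \cap V(P) = \{x_3\}$, so that the induced subgraph of $G \wedge x_1 x_2 = G * x_1 * x_2 * x_1$ on $V(P) \setminus \{x_1, x_2\}$ is the induced path $a - x_3 - \cdots - x_{k-1} - b$ of length $k-2$. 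If $j \ge 3$, the vertex $c$ survives on this shorter path and induction applies; otherwise $j \le k - 3$ (for $k \ge 5$ at least one holds), and we pivot symmetrically on $x_{k-2} x_{k-1}$.

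For step (1), primality forces $G$ to be $2$-connected: a cut vertex whose removal leaves a component of size $\ge 2$ yields a split, and otherwise $G$ would contain a pendant vertex $u$ adjacent to a cut vertex $v$, giving a split $(\{u, v\}, V(G) \setminus \{u, v\})$ since $|V(G)| \ge 4$. When $ab \notin E(G)$, the fan lemma applied to $c$ and $\{a, b\}$ gives two internally disjoint paths from $c$ to $\{a, b\}$ ending at $a$ and $b$ respectively; their concatenation gives an $a$-$b$ walk through $c$, and a careful shortening that iteratively removes chords that do not cross $c$ yields the required induced $a$-$b$ path through $c$. When $ab \in E(G)$, no induced $a$-$b$ path of length $\ge 2$ exists in $G$, and I must first apply local complementations at $V(G) \setminus \{a, b\}$ to break the $ab$ adjacency before extracting such an induced path. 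Primality gives $\cutrk_G(\{a, b\}) \ge 2$, so the adjacency rows of $a$ and $b$ restricted to $V(G) \setminus \{a, b\}$ are linearly independent over the binary field, yielding vertices of sufficiently varied adjacency pattern to $\{a, b\}$.

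The main obstacle is precisely this last subcase, $ab \in E(G)$. The case analysis breaks into: if a common neighbor $w$ of $\{a, b\}$ in $V(G) \setminus \{a, b\}$ exists, a single local complementation at $w$ flips $ab$ to a non-edge; otherwise, primality forces an edge between some $u \in N(a) \setminus N(b)$ and some $v \in N(b) \setminus N(a)$ (else the partition $(N(a) \cup \{a\}, (V(G) \setminus N(a)) \setminus \{a\})$ would be a split), and then local complementation at $u$ makes $v$ a common neighbor of $a$ and $b$, after which a further local complementation at $v$ breaks $ab$. One must then verify that, after this initial sequence, $c$ still lies on some induced $a$-$b$ path in the resulting graph, which is where primality and the large number of available vertices ($|V(G)| \ge 5$) guarantee enough flexibility.
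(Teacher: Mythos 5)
Your step (2) is essentially sound: pivoting on $x_1x_2$ (or on $x_{k-2}x_{k-1}$) does shorten an induced path while keeping $c$ on it, the complementations involved avoid $a$ and $b$, and apart from the formally uncovered case $k=4$ with $c\in\{x_1,x_3\}$ (which the same pivot handles) this part works; it plays the role of the paper's Case~1, where the authors simply complement at the internal vertices of the two subpaths and, if $ab$ appears, complement once at $c$. The genuine gap is in step (1), which is where the actual content of the lemma lies. For $ab\notin E(G)$ you argue purely combinatorially (fan lemma plus removal of chords ``that do not cross $c$''), but chords crossing $c$ are exactly the obstruction, and an induced $a$--$b$ path through $c$ need not exist at all: already in the $2$-connected graph $K_{2,3}$ with parts $\{u,v\}$ and $\{a,b,c\}$, every $a$--$b$ path through $c$ contains the segment $u\,c\,v$, and then $b$ is joined by a chord to the neighbour of $c$ lying on the $a$-side. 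Primality does not rescue the claim either: the $7$-vertex graph on $\{a,b,c,u,v,w,z\}$ with edges $au,av,aw,bu,bv,bz,cu,cv,uw,vz$ is prime (no two vertices have the same neighbourhood outside the pair, and a routine check shows every vertex bipartition with both sides of size at least $2$ has cut-rank at least $2$), satisfies $ab\notin E(G)$ and $N(c)=\{u,v\}$ with $u,v$ adjacent to both $a$ and $b$, so by the same chord argument no induced $a$--$b$ path contains $c$. Hence local complementations are indispensable already for \emph{producing} the path, not merely for breaking an $ab$-edge or for shortening, and your sentence that ``primality and the large number of available vertices guarantee enough flexibility'' is precisely the statement that still has to be proved.

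This missing step is what the paper's proof supplies: it compresses a shortest $a$--$b$ path $P$ to an induced path $azb$ of length $2$ by complementations at internal vertices (the case $c\in V(P)$ being finished directly, with a final complementation at $c$ if $ab$ became an edge), then takes a minimal path $Q$ from $c$ to $\{a,z,b\}$, shortens $Q$ to length $2$ or $3$ by operations inside $Q$, and finishes with an explicit case analysis on the adjacencies among $a,b,c,z$, using complementations at $z$ and $c$ and one pivot; that case analysis is exactly what copes with configurations like the two examples above. A secondary error sits in your $ab\in E(G)$ subcase: the absence of a common neighbour of $a$ and $b$ does not force an edge between $N(a)\setminus N(b)$ and $N(b)\setminus N(a)$, and $(N(a)\cup\{a\},\,V(G)\setminus(N(a)\cup\{a\}))$ need not be a split --- $C_5$ with $a,b$ consecutive is a prime counterexample --- so even your preliminary step of destroying the edge $ab$ needs a different argument; the paper sidesteps this by never trying to break $ab$ first, instead building the edges $ac$ and $cb$ and deleting a possible $ab$-edge at the very end by a complementation at $c$.
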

\begin{proof}

	We first create a triangle or an induced path of length $2$ on $\{a,b,c\}$ by applying local complementations at vertices in $V(G)\setminus \{a,b,c\}$.
	For this argument, $a,b,c$ are symmetric.
	Without loss of generality, we assume the distance between $a$ and $b$ is at most the distance between $a$ and $c$ or between $b$ and $c$.
	Let $P=p_1p_2 \cdots p_m$ be a shortest path from $a=p_1$ to $b=p_m$ in $G$.
	By the distance property, $c\notin V(P)$.
	We define \[
    G_1:=
    \begin{cases}
      G*p_2*p_3* \cdots *p_{m-1}&\text{if $m\ge 3$},\\
      G &\text{otherwise}.
    \end{cases}
  \]
  It is not difficult to observe that $a$ and $b$ are adjacent in $G_1$.
Now, we take a shortest path $Q=q_1q_2 \cdots q_n$ from $c=q_1$ to $q_n\in \{a,b\}$ in $G_1$.
We define \[
    G_2:=
    \begin{cases}
      G_1*q_2*q_3*\cdots *q_{n-1}&\text{if $n\ge 3$},\\
      G_1 &\text{otherwise}.
    \end{cases}
  \]
We observe that 
$c$ has a neighbor on $\{a,b\}$ in $G_2$.
Furthermore, if $a$ and $b$ are not adjacent in $G_2$, it means that 
the last local complementation removed this edge, and it implies that $c$ should be adjacent to both $a$ and $b$ in $G_2$.
Therefore, either $G_2[\{a,b,c\}]$ is a triangle or an induced path of length $2$.

We do not want to apply local complementation at $a, b$ to create a required induced path.
If $acb$ is already an induced path, then we are done.
If $G_2[\{a,b,c\}]$ is a triangle, then we apply local complementation at $c$.
Therefore, we may assume that $abc$ or $bac$ is an induced path.
Note that $G_2$ is $2$-connected.

  \vskip 0.2cm
  \noindent\emph{\textbf{Case 1.} $abc$ is an induced path in $G_2$.}

We apply Lemma~\ref{lem:pathlength2}.
Then by applying local complementations at vertices in $V(G)\setminus \{a,b\}$, 
we can obtain $G_3$ locally equivalent to $G_2$ such that $G_3[\{a,b,c\}]$ is a triangle.
By applying a local complementation at $c$, we obtain the required path.

  \vskip 0.2cm
  \noindent\emph{\textbf{Case 2.} $bac$ is an induced path in $G_2$.}

We apply Lemma~\ref{lem:pathlength2}.
Then by applying local complementations at vertices in $V(G)\setminus \{a,b\}$, 
we can obtain $G_3$ locally equivalent to $G_2$ such that $G_3[\{a,b,c\}]$ is a triangle.
By applying a local complementation at $c$, we obtain the required path.

\vskip 0.2cm

We conclude the lemma.
\end{proof}

Starting from a split decomposition whose decomposition tree is a subdivision of a huge binary tree, 
we will extract a split decomposition of some fixed binary tree. To do this, we need to explain how we sequentially transform each bag into a star whose center is unmarked.
Lemma~\ref{lem:primetwomarked1} deal with the case when a bag has two neighbor bags, 
and Lemma~\ref{lem:primetwomarked2} deal with the case when a bag has three neighbor bags.

A canonical split decomposition $D$ is \emph{rooted} if we distinguish a leaf bag and call it the \emph{root} of $D$. Let $D$ be a rooted canonical split decomposition with root bag $R$. A bag $B$ is a \emph{descendant}
of a bag $B'$ if $B'$ is on the path of bags from $R$ to $B$ in $D$. 
If $B$ is a descendant of $B'$ and $B$ and $B'$ are adjacent bags, then we call $B$ a \emph{child} of
$B'$ and $B'$ the \emph{parent} of $B$. A bag in $D$ is called a \emph{non-root bag} if it is not the root bag.

	\begin{figure}
\centerline{
\includegraphics[scale=0.5]{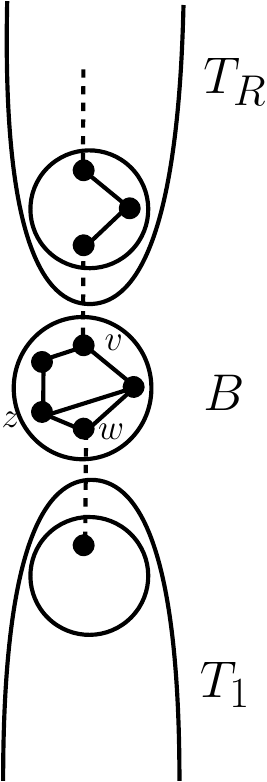}
\qquad\qquad\qquad
\includegraphics[scale=0.5]{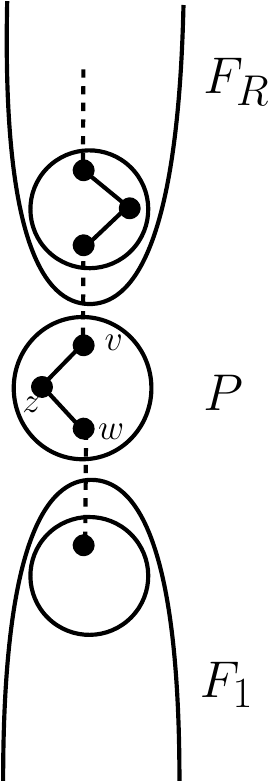}}
\caption{An example application of Lemma~\ref{lem:primetwomarked1}. }
\label{fig:twobags}
\end{figure}

\begin{lemma}\label{lem:primetwomarked1}
  Let $D$ be a rooted canonical split decomposition of a connected graph with root bag $R$ and 
  let $B$ be a non-root bag of $D$
 	such that
   	\begin{itemize}
   	\item $D- V(B)$ has
  exactly two connected components $T_1$ and $T_R$ where $T_R$ contains $R$, 
  \item the parent of $B$ is a star and $\zeta_c(D,B,T_R)$ is a leaf.  
  \end{itemize}
  Then by possibly applying local
  complementations at unmarked vertices of $D$ contained in $V(T_1)\cup V(B)$ and 
    deleting some unmarked vertices in $B$, we can transform $D$ into a canonical split decomposition $D'$ containing a bag $P$ such that 
  \begin{enumerate}
  \item $D'- V(P)$ consists of exactly two connected components $F_R$ and $F_1$,
  \item $F_R=T_R$ or $F_R=T_R*\zeta_c(D,B,T_R)$, 
  \item $F_1$ is locally equivalent to $T_1$, and
  \item $P$ is a star bag whose center is unmarked.
  \end{enumerate}
\end{lemma}
\begin{proof}
  Let $v:=\zeta_b(D,B,T_R)$ and $w:=\zeta_b(D,B, T_1)$.
  Let $y$ be an unmarked vertex in $D$ represented by $w$.
  See Figure~\ref{fig:twobags} for the setting.

  First assume that $B$ is a star bag. Since $\zeta_c(D,B,T_R)$ is a leaf, $v$ is not the center of $B$.  If its center is unmarked, then we are done.  We may assume the center of $B$
  is $w$.  Since $\abs{V(B)}\ge 3$, $B$ contains at least one unmarked vertex, which is adjacent to $w$.  We choose an unmarked leaf vertex $z$ in $B$.
  We observe that $y$ is linked to $z$, that is, $yz\in E(G)$.
  Then in $D\wedge yz$, $z$ becomes the center of a star, and $T_R$ does not change.  Also, $T_1$ is changed to the decomposition obtained from $T_1$ by pivoting $yz'$ where $z'=\zeta_c(D, B, T_1)$.
   Thus, the resulting decomposition satisfies the required property.  If $B$ is a complete bag, then we choose an unmarked vertex in $B$, and apply a local
  complementation at this vertex.  Then the resulting decomposition satisfies the required property.

  Now, suppose $B$ is a prime bag. 
 Choose an unmarked vertex $z$ of $B$ that is adjacent to $w$.
 Since a prime graph with at least $5$ vertices is $2$-connected, there is always an unmarked vertex adjacent to $w$.
 Note that $y$ and $z$ are linked.
 
  Let $B_1$ be the child of $B$. If $B_1$ is a star bag whose center is adjacent to $B$, 
  then by pivoting $yz$
  we transform $B_1$ into a star bag having $\zeta_c(D,B,T_1)$ as a leaf.
  If $B_1$ is a complete bag, then we apply a local complementation at $y$.
  In the resulting decomposition, either $B_1$ is a prime bag or $\zeta_c(D, B, T_1)$ is a leaf of a star bag.
  Let $B'$ be the bag modified from $B$ in the resulting decomposition. Note that $B'$ is still a prime graph by Lemma~\ref{lem:vm-rw}.
  
  We apply Lemma~\ref{lem:findingpathinprime} with $(a,b,c)=(v,w,z)$. 
 By Lemma~\ref{lem:findingpathinprime},
  we can modify $B'$ into an induced path $vzw$ by only applying local complementations at unmarked vertices in $B'$ and removing all unmarked vertices in $B'$ except $z$.  
    Note that the marked edges incident with $B'$ are still marked edges that cannot be recomposed, 
    as both have types $S_pS_p$ or $S_pP$. 
    Let $D'$ be the modified decomposition and let $P$ be the new bag in $D'$ modified from $B'$. 
    Then $D'- V(P)$ has two connected components $F_R$ and $F_1$ where 
    \begin{itemize}
    \item $F_R=T_R$  or $F_R=T_R*\zeta_c(D,B,T_R)$,
    \item $F_1$ is locally equivalent to $T_1$,  and
    \item $P$ is a star whose center is unmarked,
    \end{itemize} as required.
\end{proof}

\begin{lemma}\label{lem:primetwomarked2}
  Let $D$ be a rooted canonical split decomposition of a connected graph with root bag $R$ and let $B$ be a non-root bag of $D$ such that
  \begin{itemize}
  \item $D- V(B)$ has exactly three connected components $T_1,T_2,$ and $T_R$ where $T_R$ contains $R$,
  \item the distance from $\node{D}{B}$ to $\node{D}{R}$ is at least $3$ in $T_D$,
  \item the parent $P_1$ of $B$ and its parent $P_2$ satisfy that 
  $\node{D}{P_1}$ and $\node{D}{P_2}$ have degree $2$ in $T_D$,
  \item $P_1$ and $P_2$ are stars whose centers are unmarked, and
  \item for each $i\in \{1,2\}$, the child $B_i$ of $B$ in $T_i$ satisfies that $\node{D}{B_i}$ has degree $2$ in $T_D$. 
  \end{itemize}
  Then by possibly applying local complementations at unmarked vertices of $D$ contained in $V(T_1)\cup V(T_2)\cup V(B)\cup V(P_1)\cup V(P_2)$ and
  deleting some unmarked vertices in $V(T_1)\cup V(T_2)\cup V(B)\cup V(P_1)\cup V(P_2)$ and recomposing some marked edges, we can transform $D$ into a canonical split decomposition $D'$ containing a bag $P$ such that 
  \begin{enumerate}
  \item $D'- V(P)$ consists of exactly three connected components $F_1, F_2,$ and $F_R$, 
  \item  $F_R=T_R- (V(P_1)\cup V(P_2) )$, 
  \item for each $i\in \{1,2\}$, $F_i$ is locally equivalent to $T_i$ or $T_i- V(B_i)$, and
  \item $P$ is a star bag whose center is unmarked.
  \end{enumerate}
\end{lemma}
\begin{proof}
  For each $i\in \{1,2\}$, let $x_i$ be the center of $P_i$, 
  and let $v:=\zeta_b(D,B,T_R)$, and for each $i\in \{1,2\}$, let $v_i:=\zeta_b(D, B, T_i)$, 
  and $y_i$ be an unmarked vertex represented by $v_i$.

	We first deal with an easier case.

\begin{figure}
\begin{center}
\begin{subfigure}[b]{0.23\textwidth}
\includegraphics[scale=0.33]{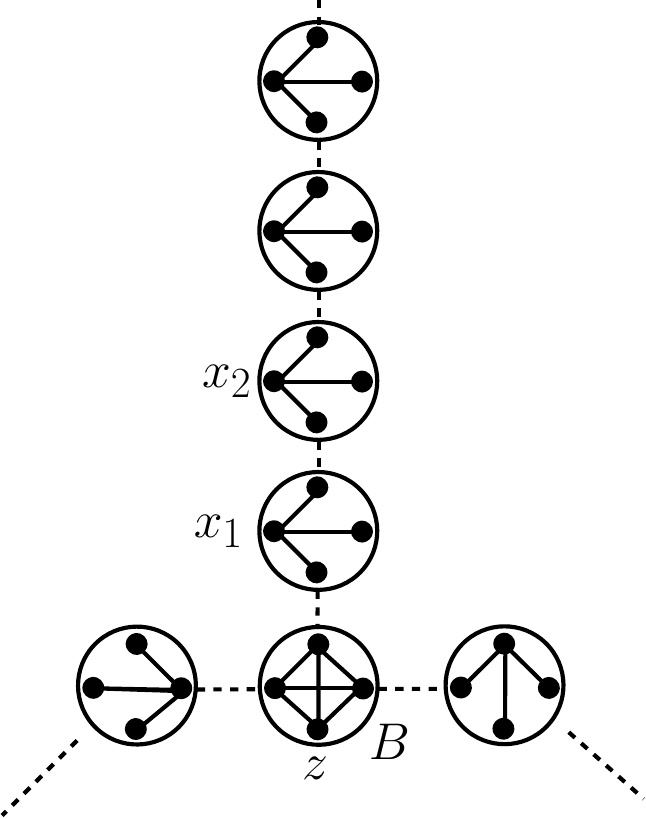} 
\caption{$D$}
\end{subfigure}
\qquad\qquad
\begin{subfigure}[b]{0.23\textwidth}
\includegraphics[scale=0.33]{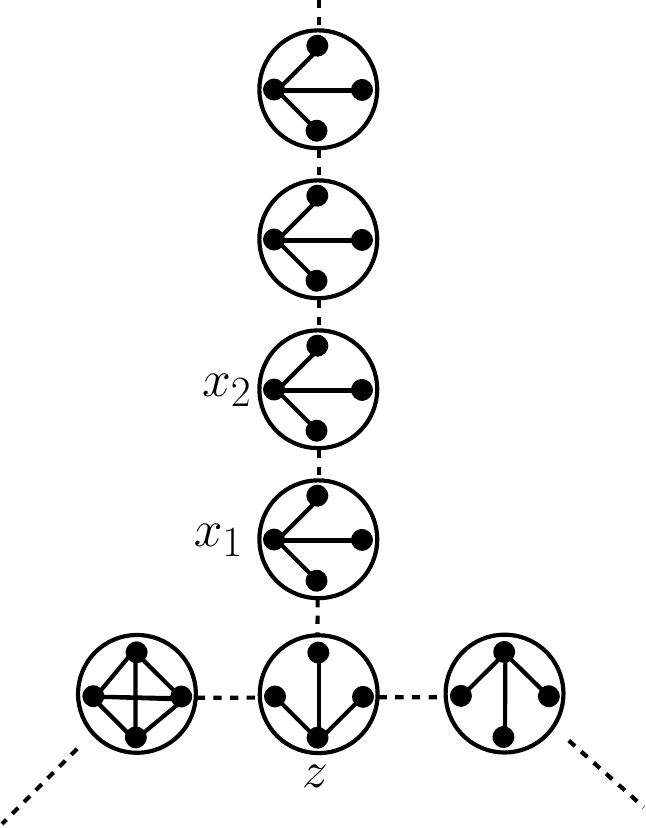} 
\caption{$D*z$}
\end{subfigure}
\end{center}
\vskip 0.3cm
\begin{center}
\begin{subfigure}[b]{0.23\textwidth}
\includegraphics[scale=0.33]{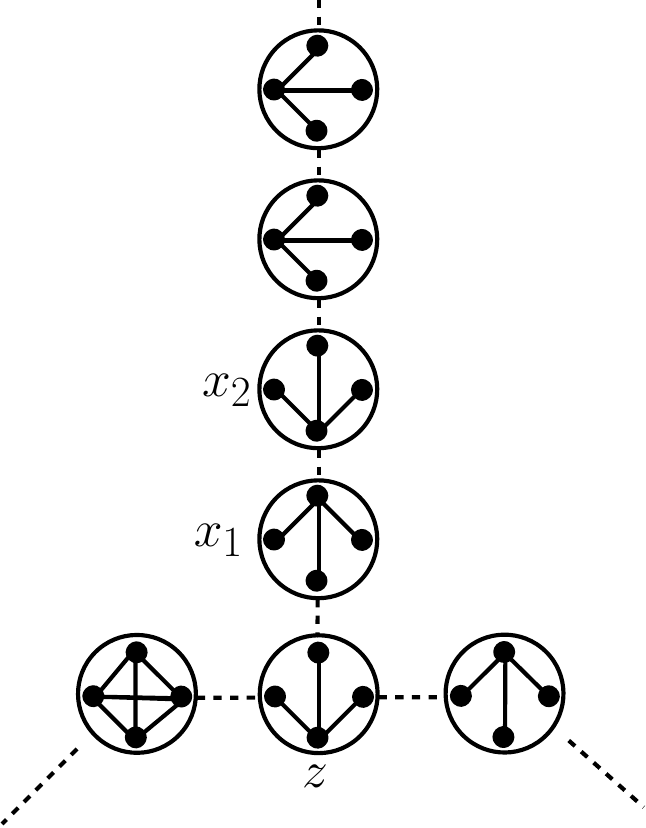}
\caption{$D*z\wedge x_1x_2$}
\end{subfigure}
\qquad\qquad
\begin{subfigure}[b]{0.23\textwidth}
\includegraphics[scale=0.33]{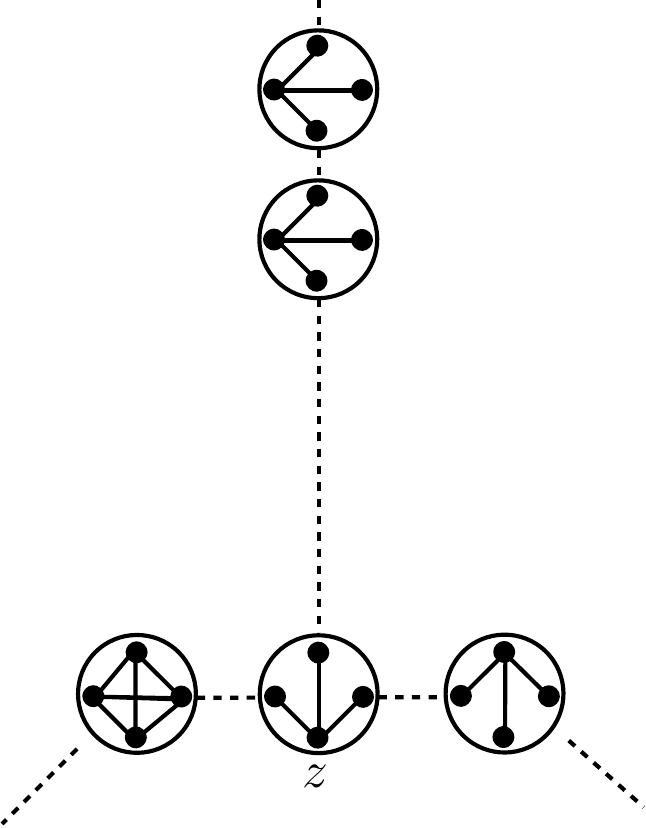}
\caption{$D*z\wedge x_1x_2-\{x_1, x_2\}$}
\end{subfigure}
\end{center}
\caption{When $B$ is a complete bag and has an unmarked vertex in Lemma~\ref{lem:primetwomarked2}. }
\label{fig:completebag}
\end{figure}

  \vskip 0.2cm
  \noindent\emph{\textbf{Case 1.} $B$ is either a star or a complete graph, and has an unmarked vertex.}

The case when $B$ is a complete graph is depicted in Figure~\ref{fig:completebag}.
	We first transform $B$ into a star whose center is unmarked.
	Let $z$ be an unmarked vertex in $B$.

  Assume $B$ is a star. Since $\zeta_c(D, B, T_R)$ is a leaf of a star,
	$v$ is not the center of $B$. 
	We may assume that the center of $B$ is either $v_1$ or $v_2$.
	By symmetry, we may assume it is $v_1$. 
	In this case, $y_1$ and $z$ are linked in $D$.
	Thus, $B$ becomes a star whose center is $z$ in $D\wedge y_1z$.
	If $B$ is a complete bag, then we apply a local complementation at $z$. 
	Then $B$ becomes a star whose center is $z$.
	Note that in any case, $T_R$ does not change by this local complementation as $\zeta_c(D, B, T_R)$ is a leaf of a star, 
	and $T_i$ becomes a split decomposition locally equivalent to $T_i$.
	
	Let $D_1$ be the resulting decomposition. Lastly,  we transform $D_1$ into a split decomposition $D_2$ as follows:
  \begin{enumerate}
  \item We pivot $x_1x_2$ and then remove all unmarked vertices contained in $P_1$ and $P_2$.
  \item We recompose marked edges incident with $P_1$ and $P_2$. Equivalently, we remove all vertices in $P_1$ and $P_2$ in the decomposition, and 
  add a new marked edge between $v$ and the marked vertex in the parent of $P_2$ that is adjacent to $P_2$.
  \end{enumerate}
  Note that $D_2$ is canonical, as the new marked edge has the same type as before.
  Thus, we obtained a required decomposition.

  \medskip  
  Now, we may assume that either $B$ is a prime bag, or $\abs{V(B)}=3$.  
    
    \begin{figure}
\begin{center}
\begin{subfigure}[b]{0.23\textwidth}
\includegraphics[scale=0.33]{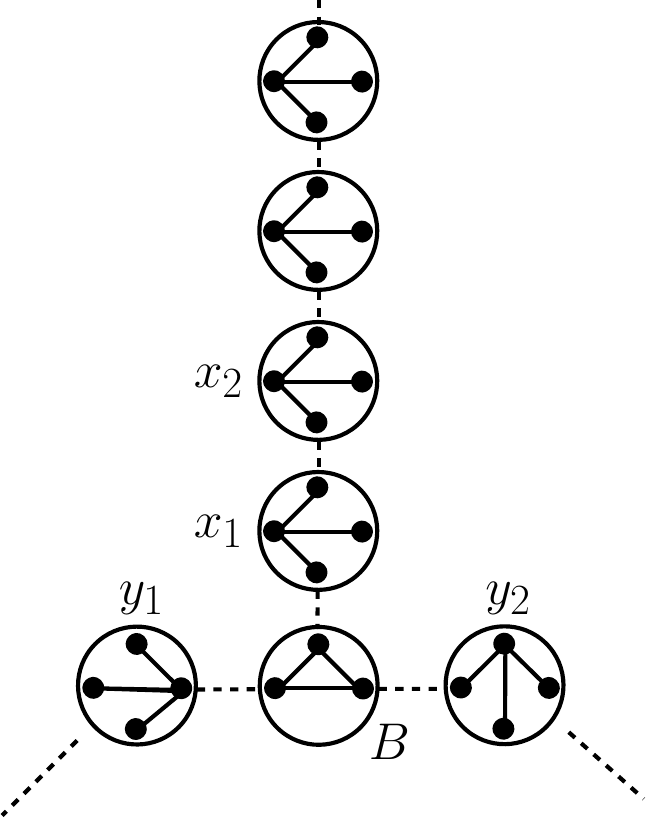} 
\caption{$D$}
\end{subfigure}
\qquad\qquad
\begin{subfigure}[b]{0.23\textwidth}
\includegraphics[scale=0.33]{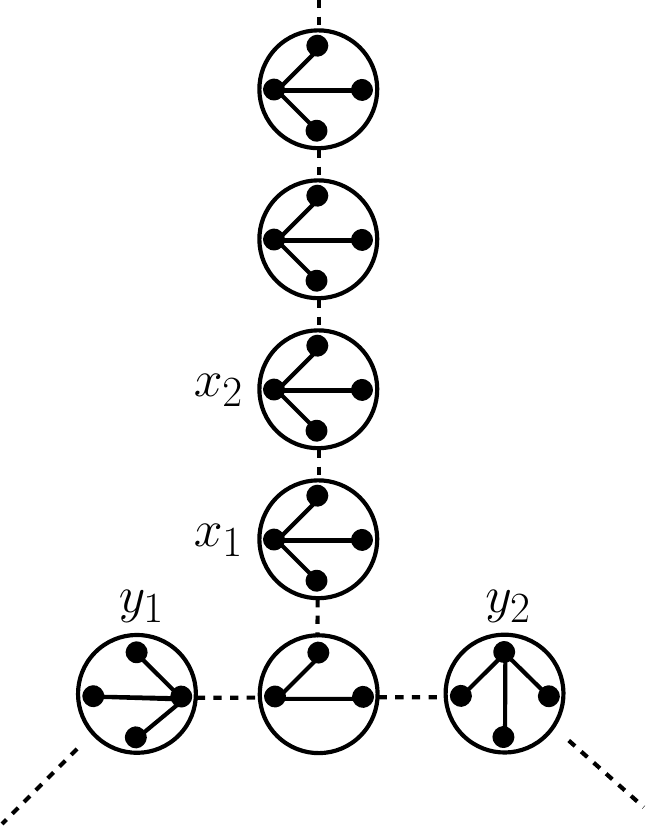} 
\caption{$D*y_1$}
\end{subfigure}
\end{center}
\vskip 0.3cm
\begin{center}
\begin{subfigure}[b]{0.23\textwidth}
\includegraphics[scale=0.33]{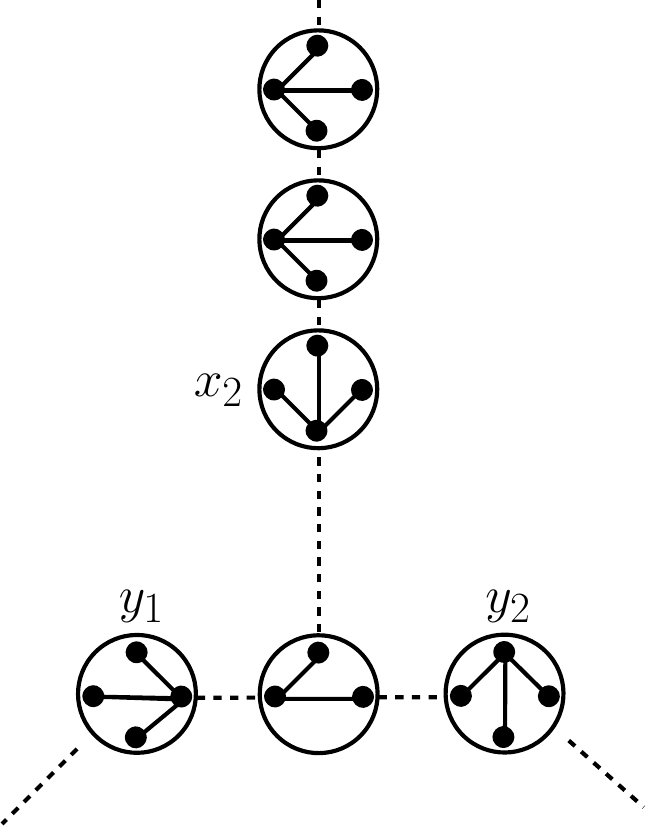}
\caption{$D*y_1\wedge x_1x_2-x_1$}
\end{subfigure}
\qquad\qquad
\begin{subfigure}[b]{0.23\textwidth}
\includegraphics[scale=0.33]{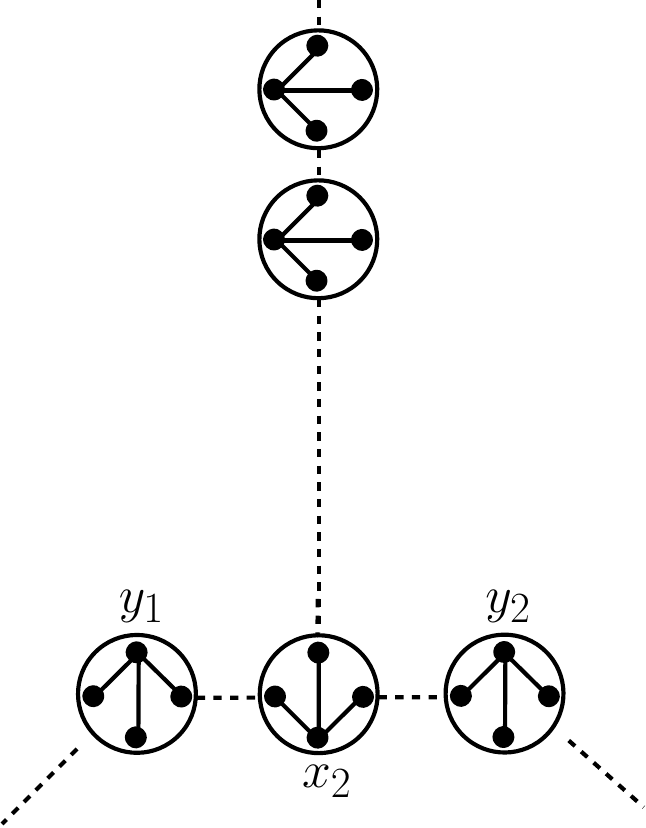}
\caption{Pivot $x_2y_1$}
\end{subfigure}
\end{center}
\caption{When $B$ is a complete bag and has no unmarked vertices in Lemma~\ref{lem:primetwomarked2}. }
\label{fig:completebagthreevertices}
\end{figure}

  \vskip 0.2cm
  \noindent\emph{\textbf{Case 2.} $\abs{V(B)}=3$.}

	An example case is depicted in Figure~\ref{fig:completebagthreevertices}.
	
	Since $\abs{V(B)}=3$, $B$ is either a star or a complete graph. We first modify $B$ into a star whose center is $v_1$.
	First assume that $B$ is a star. Since $\zeta_c(D, B, T_R)$ is a leaf of a star,
	the center of $B$ is either $v_1$ or $v_2$. We may assume the center of $B$ is $v_2$. 
	Since $v_1$ is adjacent to $v_2$, $y_1$ and $y_2$ are linked in $D$.
	Then $B$ becomes a star whose center is $v_1$ in $D\wedge y_1y_2$.
	If $B$ is a complete bag, then we apply local complementation at $y_1$. Then $B$ becomes a star whose center is $v_1$.
	Note that $T_R$ does not change by this local complementation as $\zeta_c(D, B, T_R)$ is a leaf of a star and the center of the parent of $B$ is unmarked. 
	Let $D_1$ be the resulting decomposition.
	
	Let $w$ be the marked vertex in $P_2$ that is adjacent to $P_1$. 
   We transform $D_1$ into a split decomposition $D_2$ as follows:
  \begin{enumerate}
  \item We pivot $x_1x_2$. 
  \item We delete the vertices of $V(P_1)$, and add a marked edge between $v$ and $w$.
  \item We recompose the new marked edge $vw$ (it is of type $S_pS_c$). 	
  \end{enumerate}
  Observe that the bag $B'$ in $D_2$ obtained by merging $B$ and $P_2$
  is a star whose center is $v_1$, and it contains an unmarked vertex $x_2$.
  Moreover, $D_2$ is canonical.
  Lastly, we pivot $y_1x_2$. Then $B'$ becomes a star whose center is $x_2$. 
  Note that the connected components of $D_2- V(B')$ are respectively $T_R- (V(P_1)\cup V(P_2) )$ and $F_1$ and $F_2$ such that
  $F_i$ is locally equivalent to $T_i$ for $i\in \{1,2\}$.
  
  \vskip 0.2cm
  
  Now, it remains to show when $B$ is a prime bag. 
  We reduce this case to \emph{\textbf{Case 1}} or  \emph{\textbf{Case 2}} applying Lemma~\ref{lem:findingpathinprime}.
  Note that in the previous cases, we deduce that $F_i$ is locally equivalent to $T_i$ for each $i\in \{1,2\}$.
  But when we transform $B$ into a star bag, we may merge $B$  with one of its child bags.
  \vskip 0.2cm
  \noindent\emph{\textbf{Case 3.} $B$ is a prime bag.}

	Note that applying a local complementation at an unmarked vertex in $B$ does not change the fact that $y_1$ is represented by $v_1$.
	This is because the alternating path from $y_1$ to $v_1$ does not change when we apply a local complementation at an unmarked vertex in $B$.
	
  	We apply Lemma~\ref{lem:findingpathinprime} with $(a,b,c)=(v, v_2, v_1)$ so that $B$ is transformed into an indued path $vv_1v_2$.
	Note that applying a local complementation at $v_1$ can be simulated by applying a local complementation at $y_1$.
    Since $B$ is a prime graph on at least $5$ vertices, 
    by Lemma~\ref{lem:findingpathinprime}, we can modify $B$ into an induced path $vv_1v_2$ by only applying local complementations at unmarked vertices in $B$ and $y_1$.
    Then we remove all the other vertices of $B$.  
    
    Note that the marked edge connecting $B$ and $P_1$ is still a valid marked edge as 
    $\zeta_c(D,B,T_R)$ is a leaf of a star.  However, for $i\in \{1,2\}$, the marked edge incident with $v_i$ and $\zeta_c(D,B,T_i)$ may have type $S_pS_c$. 
    In this case, we recompose this marked edge so that the resulting decomposition is canonical.
	
	Let $D_1$ be the modified decomposition.
  Since both $\node{D}{P_1}$ and $\node{D}{P_2}$ have degree $2$ in $T_D$, 
  the bag $B'$ of $D_1$ modified from $B$ still has $3$ adjacent bags in $D_1$.
    As $B'$ is star bag of $D_1$, we can reduce the remaining steps to \emph{\textbf{Case 1}} or \emph{\textbf{Case 2}} depending on the size of $B'$, 
    from which we can construct the required  canonical split decomposition.
\end{proof}

	We are ready to prove the main result of the section.
	We note that for a graph $H$, any subdivision of $H$ contains a vertex-minor isomorphic to $H$.
	We will use this fact.
	For a tree $T$, let $\eta(T)$ be the tree obtained from $T$ by replacing each edge with a path of length $4$.

\begin{proof}[Proof of Theorem~\ref{thm:largelrw}]
  	Let $t:=\abs{V(T)}$ and suppose that $\lrw(G) \ge 40(p+2)t$.
  	By Lemma~\ref{lem:subcubicpivot2}, 
  	there exists a subcubic tree $T'$ such that $T$ is a vertex-minor of $T'$ and $\abs{V(T')}\le 5t$. 
	We consider the tree $\eta(T')$ which is the tree obtained from $T'$ by replacing each edge with a path of length $4$.
	Observe that $\abs{V(\eta(T'))}\le 20t$.
  
  	Let $D$ be the canonical split decomposition of $G$ and let $T_D$ be the decomposition tree of $D$.  
	Since $\lrw(G) \ge 40(p+2)t$, by Proposition~\ref{prop:generalupperbound}, $\pw(T_D)\ge 20t-1$.  
	Since $\abs{V(\eta(T'))}\le 20t$, from Theorem~\ref{thm:pathwidththeorem}, 
	$T_D$ contains a minor isomorphic to $\eta(T')$.  
	Since the maximum degree of $\eta(T')$ is at most $3$, $T_D$ contains a subgraph $T_1$ that is isomorphic to a
  	subdivision of $\eta(T')$.  
	Let $D_1:=D[\bigcup_{v\in V(T_1)} V(\bag{D}{v})]$. 
	Observe that $D_1$ is not necessarily a decomposition of an induced subgraph of $G$, 
	as the unmarked vertex which was a marked vertex before does not correspond to a real vertex of $G$.
	To make it as a decomposition of an induced subgraph of $G$,
	we obtain a new decomposition $D_2$ from $D_1$ as follows:
	\begin{itemize}
	\item For every unmarked vertex $x$ of $D_1$ that was a marked vertex in $D$,  
	there is a vertex $y\in V(G)$ represented by $x$ in $D$.
	We choose such a vertex and replace $x$ with $y$.
	\end{itemize}
	We can observe that $D_2$ is a canonical split decomposition of an induced subgraph of $G$, 
	and $T_{D_2}$ is isomorphic to $T_{D_1}$.

	We choose a leaf bag $R_2$ of $D_2$ and regard it as the root of $D_2$. 
	We first transform $R_2$ into a star where the marked vertex in $R_2$ is a leaf by applying local complementations at unmarked vertices of $D_2$. 
	\begin{itemize}
	\item[($\ast$)] Let $v$ be the marked vertex of $R_2$, and $v'$ be a neighbor of $v$ in $R_2$, and $w$ be an unmarked vertex of $D_2$ represented by $v$.
	If $R_2$ is a star whose center is unmarked, then we do nothing. If $R_2$ is a star whose center is $v$, 
	then we pivot $v'w$. If $R_2$ is a complete bag, then 
	we apply local complementation at $v'$.
	Then $R_2$ becomes a star whose center is unmarked.

	Assume $R_2$ is a prime bag and let $C$ be the child of $R_2$. 
	If $C$ is a star whose center $c$ is adjacent to $v$, then we do a pivot at $v'w$ to turn $C$ into a star with $c$ as a leaf.
	If $C$ is a complete graph, then we apply a local complementation at $w$.
	The bag modified from $C$ is either a prime graph or a star whose leaf is adjacent to $v$.
	Let $R_2'$ be the resulting bag from $R_2$.
	
	Now, we choose one more unmarked vertex $v''$ in $R_2'$ adjacent to $v$. Such a vertex exists as $R_2'$ is $2$-connected.
	Applying Lemma~\ref{lem:findingpathinprime} to $R_2'$ with $(a,b,c)=(v, v', v'')$, 
	there exists a sequence $x_1, x_2, \ldots,x_{\ell}$ of vertices in $V(R_2')\setminus \{v, v'\}$ such that $vv''v'$ is an induced path of $R_2'*x_1*x_2*\cdots *x_{\ell}$.	We apply this sequence of local complementations and then remove all vertices in $R_2'$ except $v, v'$, and $v''$.
	By the previous procedure, the resulting decomposition is canonical and the bag modified from $R_2'$ is a star whose center is unmarked.	
	\end{itemize}
	Let $D_3$ be the resulting decomposition, and $R_3$ be the root bag that is modified from $R_2$.
	Note that $T_{D_3}$ is isomorphic to $T_{D_2}$.
	
	As $T_{D_3}$ is isomorphic to a subdivision of $\eta(T')$,
	there is a subdivision mapping $g$ from $T'$ to $T_{D_3}$ such that
	for each edge $e$ of $T'$, $g(e)$ is a path of length at least $4$. 
	Note that $g(V(T'))$ is exactly the set of all leaves and all vertices of degree at least $3$ in $T_{D_3}$.
	
	A bag $B$ is \emph{processed} if 
	every bag on the path from $B$ to the root bag is a star whose center is unmarked.
	Let $B_1, B_2, \ldots, B_m$ be an ordering of bags in $\{\bag{D_3}{v}:v\in g(V(T'))\}$ such that
	\begin{itemize}
	\item for each $i\in \{2, 3, \ldots, m\}$, every ascendant bag of $B_i$ in the set $\{\bag{D_3}{v}:v\in g(V(T'))\}$  
	is contained in $\{B_1, B_2, \ldots, B_{i-1}\}$.
	\end{itemize}
	Such an ordering can be found using a BFS.
	For each $i\in \{2,3, \ldots, m\}$, let $F(B_i)$ be the bag $B$ in $\{B_1, B_2, \ldots, B_{i-1}\}$ such that $B$ is an ascendant bag of $B_i$, 
	and $B$ is closest to $B_i$. We will define below a sequence $F_1,F_2,\ldots,F_m$ of rooted canonical split decompositions such that $\node{D_3}{B_j}\in V(T_{F_i})$ for $1\leq
        i,j\leq m$, and for convenience we keep continuing calling $B_j$ the bag $\bag{F_i}{\node{D_3}{B_j}}$. 

		For $j\in \{1, 2, \ldots, m\}$, let $F_1, F_2, \ldots, F_j$ be a maximal sequence of rooted canonical split decompositions such that
	\begin{itemize}
	\item $D_3=F_1$, 
	\item for each $i\in \{1, 2, \ldots, j-1\}$, $\origin{F_{i+1}}$ is a vertex-minor of $\origin{F_i}$,
	\item in $F_i$ with $i\in \{1, 2, \ldots, j\}$, 
	\begin{itemize}
	\item $B_1, B_2, \ldots, B_i$ are processed, 
	\item for $\ell\in \{2, 3, \ldots, i\}$, $\dist_{F_i}(B_\ell, F(B_\ell))\ge 1$, 
	\item if $B\in \{B_{i+1}, B_{i+2}, \ldots, B_m\}$ is a bag where $F(B)$ is processed, then 
	$\dist_{F_i}(B, F(B))\ge 3$,
	\item if $B\in \{B_{i+1}, B_{i+2}, \ldots, B_m\}$ is a bag where $F(B)$ is not processed,  
	then $\dist_{F_i}(B, F(B))\ge 4$.
      \item $\node{D_3}{R_3}\in V(T_{F_i})$ and $F_i$ is rooted at $\bag{F_i}{\node{D_3}{R_3}}$
	\end{itemize}
	\end{itemize}

	By ($\ast$), $B_1=R_3$ is processed. Thus, $F_1$ is indeed a sequence satisfying those conditions when $j=1$. 
	We claim that $j=m$. In other words, all bags in $\{\bag{D_3}{v}:v\in g(V(T'))\}$ can be sequentially processed.
	\begin{CLAIM}
	$j=m$.
	\end{CLAIM}
	\begin{clproof}
	Suppose for contradiction that $j<m$. 
	We may assume that $B_{j+1}$ is not processed in $F_j$, 
	otherwise, $F_1, F_2, \ldots, F_j, F_{j+1}$ is a longer sequence satisfying the required conditions.
	Clearly, $F(B_{j+1})$ is processed.
	The induction hypothesis for $j$ implies that
	$\dist_{F_j}(B_{j+1}, F(B_{j+1}))\ge 3$.
	Let $F(B_{j+1})=U_1-U_2- \cdots -U_y=B_{j+1}$ be the path of bags in $F_j$ from $F(B_{j+1})$ to $B_{j+1}$.
	
	We recursively apply Lemma~\ref{lem:primetwomarked1} to $U_2, U_3, \ldots, U_{y-1}$ so that the bag modified from each of $U_2, U_3, \ldots, U_{y-1}$ is a star whose center is unmarked.
	Note that when we apply Lemma~\ref{lem:primetwomarked1} to $U_2, U_3, \ldots, U_{y-1}$, 
	the decomposition tree does not change.
	
	After then, we apply Lemma~\ref{lem:primetwomarked2} to $B_{j+1}$ so that the bag modified from $B_{j+1}$ is a star whose center is unmarked.
	When we apply Lemma~\ref{lem:primetwomarked2} to $B_{j+1}$, some child bags of $B_{j+1}$ may be merged with $B_{j+1}$. 
	Thus if $U$ is a bag with $F(U)=B_{j+1}$, then the value $\dist_{F_j}(U, B_{j+1})$ may decrease by at most $1$.
	
	Let $F_{j+1}$ be the resulting decomposition. 
	We can verify that  in $F_{j+1}$, 
		\begin{itemize}
	\item $B_1, B_2, \ldots, B_j, B_{j+1}$ are processed, 
	\item for $\ell\in \{2, 3, \ldots, j+1\}$, $\dist_{F_{j+1}}(B_\ell, F(B_\ell))\ge 1$, 
	\item if $B\in \{B_{j+2}, B_{j+3}, \ldots, B_m\}$ is a bag where $F(B)$ is processed, then 
	$\dist_{F_{j+1}}(B, F(B))\ge 3$,
	\item if $B\in \{B_{j+2}, B_{j+3}, \ldots, B_m\}$ is a bag where $F(B)$ is not processed,  
	then $\dist_{F_{j+1}}(B, F(B))\ge 4$.
      \item $\node{D_3}{R_3}\in V(T_{F_i})$ and $F_i$ is rooted at $\bag{F_i}{\node{D_3}{R_3}}$
	\end{itemize}
	This contradicts the maximality of the sequence.
	We conclude that $j=m$.
	\end{clproof}
	Let $D_4:=F_m$. Note that $T_{D_4}$ is isomorphic to a subdivision of $T'$, and every bag of $D_4$ is a star whose center is unmarked.
	Therefore, $\origin{D_4}$ is isomorphic to a tree that can be obtained from a subdivision of $T'$ by adding some leaves, and in particular, 
	$\origin{D_4}$ contains an induced subgraph isomorphic to a subdivision of $T'$. Thus, $G$ contains a vertex-minor isomorphic to $T'$, and also contains a vertex-minor isomorphic to $T$, as required.
	\end{proof}

\section{Distance-hereditary vertex-minor obstructions for graphs of bounded linear rank-width}\label{sec:obstructions}

	In this section, we describe a way to generate all vertex-minor obstructions for graphs of bounded linear rank-width that are distance-hereditary graphs.  
	It generalizes the constructions developed by Jeong, Kwon, and Oum~\cite{JKO2014}. 

	For a distance-hereditary graph $G$, a connected distance-hereditary graph $G'$ is a \emph{one-vertex DH-extension} of $G$ if $G=G'- v$ 
	for some vertex $v\in V(G')$.  For convenience, if
	$G'$ is a \emph{one-vertex DH-extension} of $G$, and $D$ and $D'$ are canonical split decompositions of $G$ and $G'$ respectively, 
	then $D'$ is also called a \emph{one-vertex DH-extension} of $D$.

        Let $D_1,D_2$ and $D_3$ be three canonical split decompositions. For each $i\in \{1,2,3\}$, let $D_i'$ be a one-vertex DH extension of $D_i$ with a new unmarked
        vertex $w_i$ and such that $w_i$ is not contained in a star bag centered at $w_i$. 
        Furthermore, we choose an unmarked vertex $z_i$ linked to $w_i$.
        Let $B$ be a complete graph or a star, on three vertices
        $v_1,v_2,v_3$. For each $i\in \{1, 2,3\}$, let $D_i''$ be a split decomposition such that
        \begin{enumerate} 
        \item if $B$ is a complete graph, then $D_i'':=D_i'*w_i$,
        \item if $B$ is a star with center $v_i$, then $D_i'':=D_i'\wedge w_iz_i$,
        \item if $B$ is a star with $v_i$ a leaf, then $D_i'':=D_i'$.
        \end{enumerate} 
        We let $\mathcal{N}(D_1,D_2,D_3, K)$ be the set of all possible canonical split decompositions 
        obtained from the disjoint union of such $D_1'',D_2'', D_3''$ and a complete bag $B$ on three vertices $v_1, v_2, v_3$, by adding the marked
        edges $v_1w_1, v_2w_2,$ and $v_3w_3$.
        For $i\in \{1,2,3\}$, we let $\mathcal{N}(D_1,D_2,D_3, (S, i))$ 
        be the set of all possible canonical split decompositions obtained from the disjoint union of such $D_1'',D_2'', D_3''$ and a star bag $B$ on three vertices $v_1, v_2, v_3$ whose center is $v_i$, by adding the marked
        edges $v_1w_1, v_2w_2,$ and $v_3w_3$.  

        For a set $\mathcal{D}$ of canonical split decompositions, we let 
        \begin{align*}
          \Delta(\mathcal{D}) & := \{\mathcal{N}(D_1,D_2,D_3, K)\mid D_1,D_2,D_3\in \cD\} \\
          							&\cup \{\mathcal{N}(D_1,D_2,D_3, (S,i))\mid D_1,D_2,D_3\in \cD, i\in \{1,2,3\}\},\\
          \mathcal{D}^{+}&:=\mathcal{D}\cup \{D':D' \text{ is a one vertex  DH-extension of }D\in \mathcal{D}\}.
        \end{align*}

        For each non-negative integer $k$, we recursively construct the set $\Psi_k$ of canonical split decompositions as follows.

\begin{enumerate}
\item $\Psi_0:=\{K_2\}$ ($K_2$ is the canonical split decomposition of itself.)
\item For $k\ge 0$, let $\Psi_{k+1}:=\Delta(\Psi_k^{+})$.
\end{enumerate}

We prove the following.

\begin{theorem}\label{thm:mainobs} 
	Let $k$ be a non-negative integer. 
  Every distance-hereditary graph of linear rank-width at least $k+1$ contains a vertex-minor isomorphic to a graph whose
  canonical split decomposition is isomorphic to a decomposition in $\Psi_k$.
\end{theorem}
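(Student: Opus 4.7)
I would proceed by induction on $k$. For the base case $k = 0$, if $G$ is distance-hereditary with $\lrw(G) \ge 1$, then $G$ has at least one edge (a graph with no edges has linear rank-width $0$), and such an edge induces a vertex-minor isomorphic to $K_2$, whose canonical decomposition is $K_2$ itself. Since $\Psi_0 = \{K_2\}$, the base case holds.

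For the inductive step, assume the statement for $k-1$ and let $G$ be distance-hereditary with $\lrw(G) \ge k+1$ and canonical decomposition $D$. The first task is to locate a bag of $D$ with three sufficiently ``heavy'' branches. By Theorem~\ref{thm:mainchap2}, since condition~(2) with parameter $k$ must fail, some bag $B$ of $D$ satisfies either (a) at least three components $T$ of $D\setminus V(B)$ have $f_D(B,T)\ge k$, or (b) some component $T'$ has $f_D(B,T')\ge k+1$. In case~(b), the graph $\limbhat_D[B,y']$ is distance-hereditary of linear rank-width at least $k+1$, arises from $G$ via local complementations and vertex deletions on unmarked vertices, and is a vertex-minor of $G$ of strictly smaller order. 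Replacing $G$ by this limb and iterating (the process terminates because the order strictly decreases), we may assume case~(a): there is a bag $B$ and three components $T_1,T_2,T_3$ of $D\setminus V(B)$ with $f_D(B,T_i)\ge k$ for each $i$.

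For each $i$, the distance-hereditary graph $\limbhat_D[B,y_i]$ has linear rank-width at least $k$, so by the inductive hypothesis it admits a vertex-minor $H_i$ whose canonical decomposition $D_i^*$ lies in $\Psi_{k-1}$. The corresponding sequence of local complementations and vertex deletions acts on the unmarked vertices of $T_i$ (together with the boundary vertex coming from $\zeta_t(D,B,T_i)$); by Lemma~\ref{lem:localdecom} and Corollary~\ref{cor:pivotdecom}, these three sequences can be simultaneously lifted to operations on $G$ confined to $V(T_1)\cup V(T_2)\cup V(T_3)$, producing a vertex-minor $G'$ of $G$ whose canonical decomposition restricts, on each branch, to $D_i^*$ or a one-vertex extension thereof (i.e., to an element of $\Psi_{k-1}^+$), while leaving a modified central bag $B'$ derived from $B$.

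Finally, we reduce $B'$: applying Lemma~\ref{lem:findingpathinprime} when $B'$ is prime, together with local complementations on unmarked vertices inside $B'$ and deletions of superfluous ones, we turn $B'$ into a bag of type $K$ or $S$ on exactly three vertices $v_1,v_2,v_3$ whose types match the bags of $D_i^*$ meeting the boundary in the manner required by the three cases in the definition of $\Delta$. Pivoting as in cases~(1)--(3) of that definition then certifies that the canonical decomposition of the resulting vertex-minor is precisely of the form $\Delta(D_1'',D_2'',D_3'') \in \Psi_k$. I expect the principal difficulty to lie in simultaneously choreographing the three limb reductions and the reduction of the central bag so that the required type compatibility holds in every branch without inadvertently disturbing the vertex-minor structure already established in the other branches; the three cases in the definition of $\Delta$ are exactly what is needed to absorb the possible configurations, so a careful, uniform case analysis should suffice.
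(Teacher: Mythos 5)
Your overall strategy coincides with the paper's (find a bag $B$ with three components $T_1,T_2,T_3$ satisfying $f_D(B,T_i)\ge k$, apply induction to the three limbs, and reassemble a $\Delta$-construction around a three-vertex central bag), and your explicit treatment of the case where a single component has $f_D(B,T')\ge k+1$ --- recursing into the limb, which is a vertex-minor of strictly smaller order --- is a sensible way to justify a step the paper deduces directly from Theorem~\ref{thm:mainchap2}. However, there is a genuine gap at the heart of your argument: the claim that the three sequences of local complementations and deletions witnessing $H_i$ inside $\limbhat_D[B,y_i]$ ``can be simultaneously lifted to operations on $G$ confined to $V(T_1)\cup V(T_2)\cup V(T_3)$'' so that the resulting canonical decomposition has each branch equal to an element of $\Psi_{k-1}^{+}$ hanging off a modified central bag. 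Lemma~\ref{lem:localdecom} and Corollary~\ref{cor:pivotdecom} only guarantee that local complementation and pivoting preserve canonicity and the decomposition tree; they say nothing about what happens under vertex deletions, which create bags of size two and force recompositions that can merge bags of a branch with the central bag or with bags on the path between $B$ and the sub-decomposition supporting $D_i^{*}$. Moreover, the limb itself is defined via a local complementation or pivot at a \emph{marked} vertex, so a vertex-minor reduction of $\limbhat_D[B,y_i]$ does not literally translate into operations on $D$ supported in $T_i$. Controlling all of this is exactly the content of the paper's Lemma~\ref{lem:reduce} (shrinking the intermediate bags two at a time by pivots on unmarked vertices, under the hypothesis that the attachment vertex is not a star center) and Proposition~\ref{prop:replace} (showing the limb at $B$ can be made \emph{exactly} $A$ or a one-vertex DH-extension of $A$, the residual size-three star bag of Case~2 being precisely the reason $\Psi_k$ is built from $\Psi_{k-1}^{+}$ rather than $\Psi_{k-1}$). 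Your proposal invokes none of this machinery and instead defers it to ``a careful, uniform case analysis,'' which is exactly the part that needs proof.

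The choreography you flag as the principal difficulty is also not symmetric in the three branches, and this matters: the replacement step only applies when a branch is attached at a \emph{leaf} of a star bag, so the paper first normalizes $B$ (via Proposition~\ref{prop:limb}) to a star of size three centered at $v_3$, replaces the limbs at $v_1,v_2$, then re-centers $B$ at $v_2$ by local complementations at $z_3'$ and $z_2'$, replaces the third limb, and finally verifies by a two-case analysis that $D''$ or $D''*z_2'$ lies in $\Psi_k$; your plan of ``reducing $B'$ and then pivoting as in cases (1)--(3) of the definition of $\Delta$'' does not supply this sequencing or the final membership check. A minor further point: since every graph in play is distance-hereditary, all bags are of type $K$ or $S$, so the appeal to Lemma~\ref{lem:findingpathinprime} for a prime central bag is vacuous here.
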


	We prove some intermediate lemma.
	
	\begin{figure}
\centerline{
\includegraphics[scale=0.35]{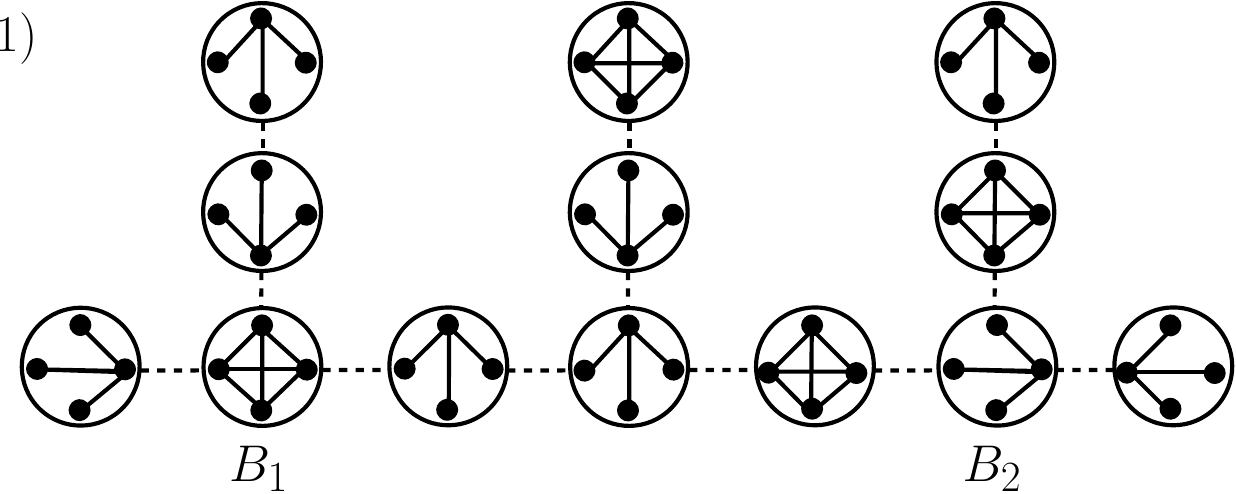} \quad\quad\quad
\includegraphics[scale=0.35]{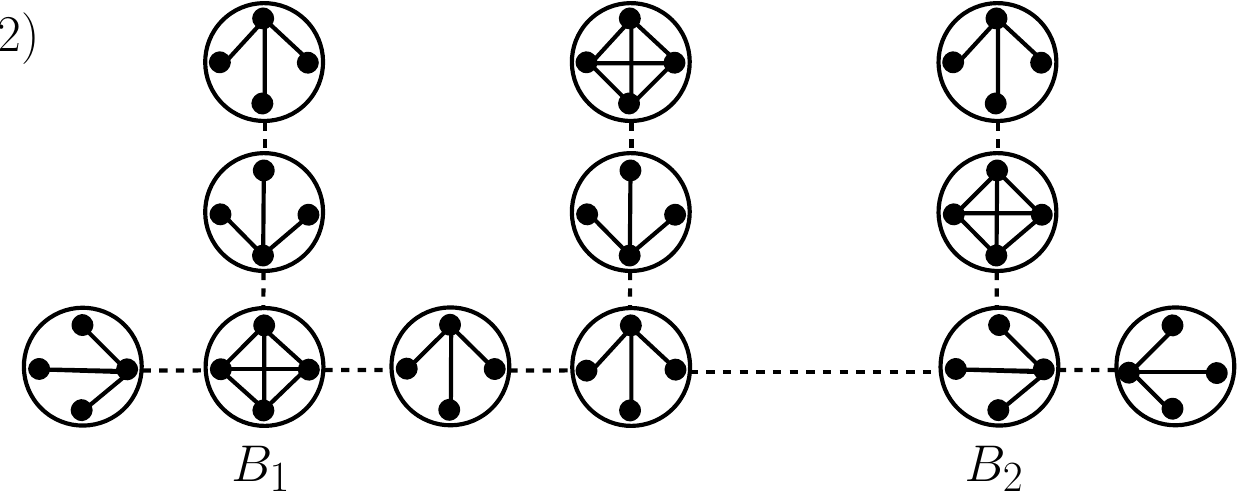} }
\vskip 0.7cm
\centerline{
\includegraphics[scale=0.35]{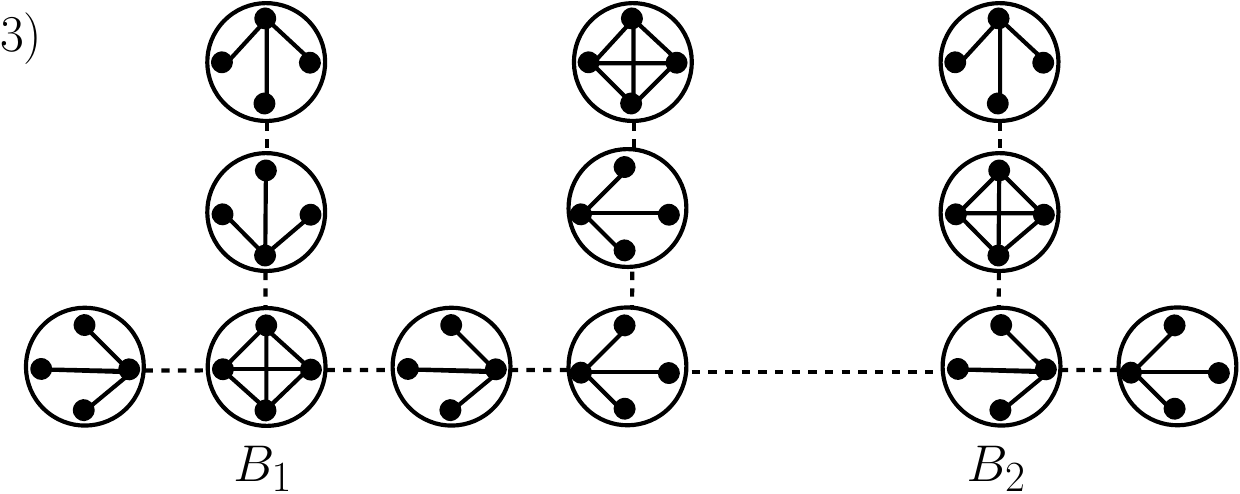} \quad\quad\quad
\includegraphics[scale=0.35]{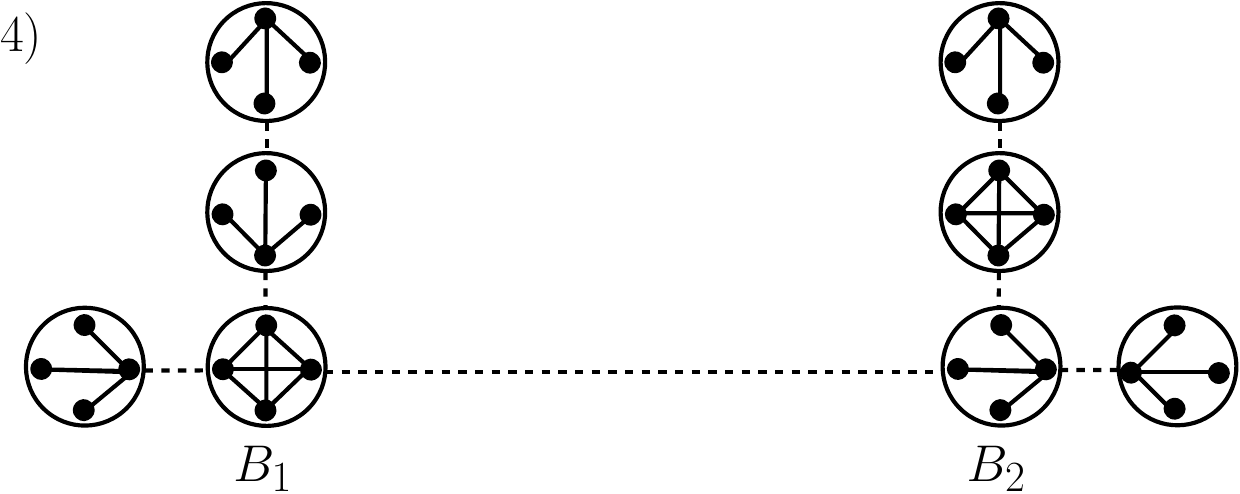} }
\caption{A shorten procedure described in Lemma~\ref{lem:reduce}. }
\label{fig:shorten}
\end{figure}

\begin{lemma}\label{lem:reduce}
  	Let $D$ be the canonical split decomposition of a connected distance-hereditary graph containing two distinct bags $B_1$ and $B_2$,  
  	and for each $i\in \{1,2\}$, let $T_i$ be the connected component of $D- V(B_i)$ such that $T_i$ contains $B_{3-i}$.
  If 
  \begin{itemize}
  \item $\zeta_b(D, B_1, T_1)$ is not the center of a star and 
  \item $B_2$ is a star bag and $\zeta_b(D, B_2, T_2)$ is a leaf of $B_2$,
  \end{itemize}
  then there exists a canonical split decomposition $D'$ such that
  \begin{enumerate}
  \item  $\origin{D}$ has $\origin{D'}$ as a vertex-minor,
  \item $D[V(T_2)\setminus V(T_1)]=D'[V(T_2)\setminus V(T_1)]$, 	
  \item $D[V(T_1)\setminus V(T_2)]=D'[V(T_1)\setminus V(T_2)]$, and
  \item either $B_1$ and $B_2$ are adjacent in $D'$, or 
    there is a path of bags $B_1-B-B_2$ in $D'$ such that $\abs{V(B)}=3$ and $B$ is a star bag whose center is unmarked.
  \end{enumerate}
\end{lemma}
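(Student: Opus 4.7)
The plan is to proceed by induction on the complexity measure $\Phi(D) := m + N$, where $m$ is the length of the path from $B_1$ to $B_2$ in $T_D$ and $N$ is the number of unmarked vertices lying in intermediate bags or in their side subtrees. Write the path as $B_1 = C_0, C_1, \ldots, C_m = B_2$ with marked edges $e_i = x_iy_i$, $x_i \in V(C_{i-1})$, $y_i \in V(C_i)$, and observe that by hypothesis $x_1 = \zeta_b(D,B_1,T_1)$ is not an $S_c$-end and $y_m = \zeta_b(D,B_2,T_2)$ is a leaf of the star bag $B_2$. If $D$ is already in one of the two target forms of conclusion~(4) the base case is settled with $D' := D$.

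All reductions will use only two kinds of vertex-minor operations: (a) deleting an unmarked vertex that lies in some intermediate bag $C_j$ ($1 \le j \le m-1$) or in a side subtree of $C_j$; and (b) recomposing an internal marked edge $e_j$ with $2 \le j \le m-1$, followed by re-canonicalization. Both (a) and (b) modify only vertices of $V(T_1)\cap V(T_2)$, which is disjoint from $V(T_2)\setminus V(T_1) \supseteq V(B_1)$ and from $V(T_1)\setminus V(T_2) \supseteq V(B_2)$; conditions~(2) and~(3) therefore hold automatically. A direct local complementation at an unmarked vertex $u$ inside an intermediate bag is deliberately avoided: by Lemma~\ref{lem:represent}, the representatives of $u$ generically include $x_1 \in V(B_1)$ and $y_m \in V(B_2)$, so $D * u$ would in general modify $B_1$ or $B_2$ and break the preservation conditions.

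The reduction is driven by a case analysis on intermediate bags, each of which is complete or a star by Theorem~\ref{thm:Bouchet88}. If some intermediate $C_j$ has $y_j$ and $x_{j+1}$ adjacent inside $C_j$ (i.e., $C_j$ is complete or a star centered at $y_j$ or $x_{j+1}$), then deleting all unmarked vertices of $C_j$ together with all side subtrees of $C_j$ reduces $C_j$ to a size-$2$ bag with one internal edge; Case~2 of the size-$2$ elimination in Section~\ref{subsec:splitdecs} then replaces $C_j$ by a single new marked edge, strictly decreasing $\Phi$. When this procedure is iterated down to $m = 2$, $j = 1$, the hypotheses on $x_1$ and $y_m$ guarantee that the resulting marked edge $x_1 y_m$ has type $KS_p$ or $S_pS_p$, both allowed by Theorem~\ref{thm:can-forbid}. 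Otherwise every intermediate bag is a star whose center is not among its path-marked vertices; we first delete every side subtree attached to such a marked center, which, upon re-canonicalization, turns the center into an unmarked vertex, and then we prune the surplus unmarked leaves, so each intermediate bag becomes a $3$-vertex star with unmarked center. If $m \ge 3$, recomposing the internal marked edge $e_{j+1}$ between two such consecutive $3$-vertex stars produces, by direct calculation, a $K_4$ — a canonical complete bag — and hence strictly decreases $\Phi$, so the previously-handled eliminable case now applies to this merged bag.

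The main obstacle is to verify that every reduction step indeed produces a valid canonical decomposition and strictly decreases $\Phi$, and that the final decomposition really matches one of the two target forms of conclusion~(4). This is ensured by the type hypotheses on $x_1$ and $y_m$, which prevent forbidden marked edges from appearing as by-products of the reductions, together with a careful case-by-case analysis of how consecutive intermediate bags interact under the two allowed operations.
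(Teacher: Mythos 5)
There is a genuine gap, and it sits exactly at the case where the paper needs a non-trivial idea. Your plan handles an intermediate bag $C_j$ only when its two path-marked vertices are adjacent inside $C_j$; in the remaining case (every intermediate bag is a star whose two path-marked vertices are leaves and whose center is internal) you propose to recompose the internal marked edge between two such $3$-vertex stars and claim this "produces, by direct calculation, a $K_4$". This is false. That marked edge is of type $S_pS_p$, hence a genuine split: recomposing a marked edge never changes the underlying graph (it is the inverse of a simple decomposition), and the direct calculation of $(S_1\cup S_2)\wedge uv\setminus\{u,v\}$ on centers $c_1,c_2$ and outer leaves $a,b$ gives the path $a\,c_1\,c_2\,b$, i.e.\ a $P_4$, not $K_4$; upon re-canonicalization you get back exactly the two star bags you started with, so $\Phi$ does not decrease and the "eliminable" case never becomes applicable. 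Worse, no amount of deletion plus recomposition can work here: in $\origin{D}$ the two unmarked centers are cut vertices in series separating the $B_1$-side from the $B_2$-side, and vertex deletions that preserve both end parts and connectedness cannot reduce two such cut vertices to at most one, which conclusion~(4) requires. Some local complementation is unavoidable, and your stated reason for banning it is also wrong: the representatives of the unmarked center of such an intermediate star lie only in that bag and the bag across the marked edge at its path-leaf (by the definition of representatives, the alternating path dead-ends at leaves), never in $B_1$ or $B_2$. This is precisely what the paper exploits: it pivots on two unmarked vertices represented by the centers of two consecutive such stars, which turns the four path-leaves into a path $p_ip_{i+2}p_{i+3}p_{i+5}$ inside the two bags without touching $D[V(T_2)\setminus V(T_1)]$ or $D[V(T_1)\setminus V(T_2)]$, and then deletes both bags at once.

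A secondary, unproven assertion is that "the type hypotheses on $x_1$ and $y_m$ prevent forbidden marked edges from appearing as by-products of the reductions". The hypotheses only control the very last edge $x_1y_m$ (which is then of type $KS_p$ or $S_pS_p$, as you say). In an intermediate step where the bag adjacent to $B_1$ is eliminated, the new bridging edge can perfectly well be of type $KK$ or $S_pS_c$ with one end in $B_1$ (e.g.\ $B_1$ a star with leaf $x_1$, the eliminated bag complete, and the next bag a star centered at its path-marked vertex), and the forced recomposition would delete $x_1$ and absorb vertices into $B_1$, destroying condition~(2). Avoiding this requires either an argument about the order of eliminations or the paper's two-phase structure (first all bags meeting the shortest path in two vertices, then the paired pivot step); your proposal provides neither.
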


\begin{proof}
  If $B_1$ and $B_2$ are adjacent bags in $D$, then we are done.  We assume that $B_1$ and $B_2$ are not adjacent.
  Let $B_1=U_1-U_2- \cdots -U_m=B_2$ be the path of bags in $D$. 
	Also, let $P=p_1p_2 \ldots p_{\ell}$ be the shortest path from $\zeta_b(D, B_1, T_1)=p_1$ to
  $\zeta_b(D, B_2, T_2)=p_{\ell}$ in $D$. Note that $\ell\ge 4$ as $m\ge 3$.
  
  Suppose there exists a bag $U_i$ containing exactly two consecutive vertices $p_j$, $p_{j+1}$ of $P$. 
  In this case, we remove $U_i$ and remove all the connected components of $D- V(U_i)$ that contain neither $B_1$ nor $B_2$, 
  and add a marked edge $p_{j-1}p_{j+2}$.
  This procedure corresponding to removing all unmarked vertices in the removed sub-decomposition.
  Since this operation does not change the parts $D[V(T_2)\setminus V(T_1)]$ and $D[V(T_1)\setminus V(T_2)]$,
  applying this operation consecutively, we may assume that 
  for $i\in \{2, 3, \ldots, m-1\}$, $U_i$ contains three consecutive vertices of $P$. 
  In other words, $U_i$ is a star whose center is adjacent to neither a vertex of $U_{i-1}$ nor to a vertex of $U_{i+1}$. 
  See 2) of Figure~\ref{fig:shorten}.

  Suppose $m\ge 4$. Note that $U_2$ contains $p_2, p_3, p_4$ and $U_3$ contains $p_5, p_6, p_7$.  Take two unmarked vertices $x_3$ and $x_6$ of $D$ that are represented
  by $p_3$ and $p_6$, respectively.  Observe that $x_3$ and $x_6$ are linked in $D$.  Let $D':=D\wedge x_3x_6$. Notice that $D'[V(U_2)]$ and $D'[V(U_3)]$ are stars whose
  centers are adjacent to each other.  Moreover, $D'[V(T_2)\setminus V(T_1)] = D[V(T_2)\setminus V(T_1)]$ and similarly,
  $D'[V(T_1)\setminus V(T_2)] = D[V(T_1)\setminus V(T_2)]$.  For each $i\in \{2,3\}$, we delete from $D'$, $U_i$ and all the connected components
  of $D' - V(U_i)$, except two connected components containing $B_1$ and $B_2$ respectively, and add the marked edge $p_1p_8$.  
    See 3) and 4) of Figure~\ref{fig:shorten}.
  By the assumption that $p_1$ is not the center of $B_1$, the
  marked edge incident with $B_1$ is of type $S_pS_p$ or $KS_p$.  Therefore, the resulting decomposition is a canonical split decomposition satisfying the conditions (1), (2), (3),
  and the number of bags containing $P$ is decreased by two. 
  
  Applying this procedure recursively, at the end, we obtain a canonical split decomposition such that
  either $B_1$ and $B_2$ are adjacent, or 
    there is a path of bags $B_1-B-B_2$ such that $B$ is a star bag whose center is adjacent to neither $B_1$ nor $B_2$.
    In the latter case, we remove all unmarked leaves of $B$, and remove all connected components of $D-V(B)$ containing neither $B_1$ nor $B_2$, 
    and replace the center of $B$ with an unmarked vertex represented by it. 
    Then we obtain the required decomposition.
\end{proof}

	The next proposition says how we can replace limbs having linear rank-width $k\ge 1$ into a canonical split decomposition in $\Psi_{k-1}^{+}$ using Lemma~\ref{lem:reduce}.
	In this proposition, we sometimes remove unmarked vertices from a given split decomposition, to take a split decomposition of the graph obtained by removing the corresponding vertices.
	We described this operation in Section~\ref{subsec:modifications}.

\begin{proposition}\label{prop:replace}   
  Let $D$ and $A$ be the canonical split decompositions of some connected distance-hereditary graphs. 
  Let $B$ be a star bag of $D$ and $v$ be a leaf of $B$, and $T$ be a connected component of $D- V(B)$ such that $\zeta_b(D, B, T)=v$, and 
  let $w$ be an unmarked
  vertex of $D$ represented by $v$.  
  If $\limbhat_D[B,w]$ has a vertex-minor that is either $\origin{A}$ or a one-vertex
  DH extension of $\origin{A}$, then there exists a canonical split decomposition $D'$, a vertex-minor of $D$, such that
  \begin{enumerate}
  \item either $D'- V(T)=D- V(T)$ or $D'- V(T)=(D- V(T))*v$, and
  \item for some unmarked vertex $w'$ of $D'$ represented by $v$, $\limbtil_{D'}[B,w']$ is either $A$ or a one-vertex DH-extension of $A$.
  \end{enumerate}
\end{proposition}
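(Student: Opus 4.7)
The strategy is to lift the vertex-minor operations from the limb up to $D$ and carefully control their effect on the part of $D$ outside $V(T)$.

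Let $M$ denote the target vertex-minor, so that $M$ is either $\origin{A}$ or a one-vertex DH-extension of $\origin{A}$. By the definition of a vertex-minor, one can write
\[
M=(\limbhat_D[B,w]*x_1*\cdots*x_k)[V']
\]
for some sequence $x_1,\ldots,x_k$ of unmarked vertices of the limb together with a subset $V'\subseteq V(\limbhat_D[B,w])$. Each $x_i$ is also an unmarked vertex of $D$ lying in $V(T)$, and the same holds for the vertices of $V(\limbhat_D[B,w])\setminus V'$. The plan is to apply these operations to $D$, canonicalize, and verify both conclusions.

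Define $D_0:=D$ and $D_i:=D_{i-1}*x_i$ for $i=1,\ldots,k$; by Lemma~\ref{lem:localdecom} each $D_i$ is a canonical decomposition. The central structural observation is
\[
(D_{i-1}*x)\setminus V(T)=\begin{cases}(D_{i-1}\setminus V(T))*v&\text{if $v$ represents $x$ in $D_{i-1}$,}\\ D_{i-1}\setminus V(T)&\text{otherwise,}\end{cases}
\]
where $v$ is treated as unmarked in $D_{i-1}\setminus V(T)$. This holds because any alternating path in $D_{i-1}$ from a vertex outside $V(T)$ to a vertex of $V(T)$ must traverse the unique marked cut-edge $v-\zeta_t(D,B,T)$, and $v$ sits at an even position along such a path; hence the representatives of $x$ lying outside $V(T)$ are in bijection with the representatives of $v$ in $D_{i-1}\setminus V(T)$, and the bag-level modifications agree. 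Since $v$ is a leaf of the star bag $B$, we have $B*v=B$, so $B$ itself is never altered. Because local complementation is involutive, it follows by induction that $D_k\setminus V(T)\in\{D\setminus V(T),\,(D\setminus V(T))*v\}$. Let $D^*$ be obtained from $D_k$ by deleting the unmarked vertices in $V(\limbhat_D[B,w])\setminus V'$; these deletions lie inside $V(T)$, so $D^*\setminus V(T)=D_k\setminus V(T)$ and $\limbhat_{D^*}[B,w']=M$ for some unmarked vertex $w'$ of $D^*$ represented by $v$. Let $D'$ denote the canonical form of $D^*$.

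By construction $\limbhat_{D'}[B,w']=M$, whence $\limbtil_{D'}[B,w']$ is the canonical decomposition of $M$, i.e., either $A$ or a one-vertex DH-extension of $A$, establishing condition~(2). For condition~(1), the only canonicalization step that could alter $D\setminus V(T)$ is a recomposition of the marked edge $v-\zeta_t(D,B,T)$, which happens exactly when the bag of $D^*$ at $\zeta_t(D,B,T)$ becomes a star centered at $\zeta_t(D,B,T)$---the forbidden pattern $S_pS_c$. Whenever this would arise, one first performs an additional local complementation at a suitable unmarked vertex of $T$ linked to $\zeta_t(D,B,T)$, switching its bag-type at $\zeta_t(D,B,T)$ to a leaf-type without altering $M$, at the cost of at most one further $*v$-effect which is absorbed by involutivity. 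The main technical difficulty lies precisely in this boundary analysis: tracking how the sequence of deletions and local complementations changes the type of the bag at $\zeta_t(D,B,T)$ and ensuring that no forbidden recomposition ever absorbs $B$ into the limb---this is exactly what forces the statement to allow for the single optional local complementation at $v$ in $D\setminus V(T)$.
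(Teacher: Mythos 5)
Your first half coincides with the paper's: the paper also lifts the witnessing local complementations to $D$ (setting $D_1=D*x_1*\cdots*x_m$ and noting $D[V(B)]=D_1[V(B)]$), and your bookkeeping of the part outside $V(T)$ (unchanged or changed by a single $*v$, absorbed by involutivity) is fine. The gap is in the second half, exactly at the point you label ``the main technical difficulty'' and then do not actually resolve. Deleting all of $S$ from $D_k$ leaves a marked graph full of bags that have lost every unmarked vertex, and ``let $D'$ be the canonical form of $D^*$'' hides a nontrivial cleanup (pruning emptied subtrees, smoothing size-$2$ bags, recomposing forbidden marked edges) whose interaction with the boundary edge $v\,\zeta_t(D,B,T)$ is the whole content of the proposition. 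When the chain of emptied bags between $B$ and the surviving part collapses, the bag that ends up adjacent to $B$ can be a star centered at its marked vertex, so the edge at $B$ has forbidden type $S_pS_c$; forced recomposition there merges that bag into $B$ and destroys condition~(1). Your proposed repair is not available: no unmarked vertex of $T$ is represented by $\zeta_t(D,B,T)$ (its unique marked edge leads into $B$, so every alternating path from it leaves $T$), and a local complementation at an unmarked vertex of $T$ only complements bags at representatives of that vertex, which for the offending bag are leaves, where complementation does nothing. Re-centering that star requires a pivot involving a vertex outside $T$, whose effect on $D\setminus V(T)$ is not of the allowed form (identity or $*v$).

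This is precisely why the paper does not claim, as you do, that $\limbtil_{D'}[B,w']$ is exactly the canonical decomposition of the target $M$. Its proof chooses a vertex set $Q$ carrying a split decomposition of $M$ inside $D_1$, takes the bag $B'$ meeting $Q$ closest to $B$, proves that the marked vertex of $B'$ pointing toward $B$ is not a star center (using connectivity of $D_1[Q]$), and then shrinks the intermediate bags by the pivot-based Lemma~\ref{lem:reduce}, which may legitimately leave one buffer bag $B_s$: a size-$3$ star with an unmarked center $c$. That buffer is what keeps the edge at $B$ non-recomposable, and it is also the source of the case analysis you skip: either $c$ becomes the extension vertex (so the limb is a one-vertex DH-extension of $A$ even when $M=\origin{A}$), or, when $M$ was already a one-vertex extension, the old extension vertex $a$ must be deleted (or $c$ removed by a local complementation when the adjacent bag is a star with unmarked center) so that the limb stays within a single extra vertex. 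Without some substitute for this boundary analysis --- in your setting, deliberately retaining an unmarked vertex as the center of a buffer bag instead of deleting it, and then doing the same two-case argument --- your construction cannot guarantee conditions (1) and (2) simultaneously, so the proof as written has a genuine gap.
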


\begin{proof} 
	Suppose $\limbhat_D[B,w]$ has a vertex-minor that is either $\origin{A}$ or a one-vertex
  DH extension of $\origin{A}$. It means that
	  there exist a sequence $x_1, x_2, \ldots, x_m$ of vertices of $\limbhat_D[B,w]$ and $S\subseteq V(\limbhat_D[B,w])$ such that 
	  $	(\limbhat_D[B,w]*x_1*x_2* \ldots *x_m)- S$ is
  	either $\origin{A}$ or a one-vertex DH-extension of $\origin{A}$. 
  	So, there exists $Q \subseteq V(\mathcal{L}_D[B,w])$ such that the graph obtained from $(\limb_D[B,w]*x_1*x_2* \ldots *x_m)[Q]$ by recomposing all marked edges is 
	either $\origin{A}$ or a one-vertex 	DH-extension of $\origin{A}$.  
  	As $v$ is a leaf of $B$, $\limb_D[B,w]$ is an induced subgraph of $D$.
  	Thus, we have
  	\[(\limb_D[B,w]*x_1*x_2* \ldots *x_m)[Q]=(D*x_1*x_2* \ldots *x_m)[Q].\] 
	Let $D_1=D*x_1*x_2* \ldots *x_m$.  Note that 	$D[V(B)]=D_1[V(B)]$ as $v$ is a leaf of $B$, and $\{x_1, x_2, \ldots, x_m\}\subseteq V(T)$.

  We choose a bag $B'$ in $D_1$ such that
  \begin{enumerate}
  \item $B'$ has a vertex of $Q$, and 
  \item $\dist_{D_1}(B, B')$ is minimum.
  \end{enumerate}
  Let us check that all the hypothesis of Lemma \ref{lem:reduce} with $(B_1, B_2)=(B', B)$ are satisfied.  Let $T_1$ be the connected component of $D_1- V(B')$ containing $B$ and let $T_2$ be the
  connected component of $D_1- V(B)$ containing $B'$.  Let $y:=\zeta_b(D_1, B', T_1)$.  From the choice of $B'$, we have $y\notin Q$; otherwise, there exists an unmarked
  vertex represented by $y$, and all vertices on the path from $y$ to it should be contained in $Q$, as $Q$ induces a connected graph.  In particular, the bag in $T_1$
  containing a vertex adjacent to a marked vertex in $B'$ should contain a vertex of $Q$, and this contradicts to the minimality of the distance between $B$ and $B'$.
  In addition, $y$ is not the center of a star bag because $D_1[Q]$ is connected and $B'$ has at least two vertices of $Q$. Therefore, the bags $B$ and $B'$ satisfy the
  hypothesis of Lemma \ref{lem:reduce} with $(B_1, B_2)=(B', B)$. 
  
  By applying Lemma~\ref{lem:reduce} on $B$ and $B'$, there exists a canonical split decomposition $D_2$ such that
  \begin{enumerate}
  \item $\origin{D_1}$ has $\origin{D_2}$ as a vertex-minor,
  \item $D_1[V(T_2)\setminus V(T_1)]=D_2[V(T_2)\setminus V(T_1)]$,
  \item $D_1[V(T_1)\setminus V(T_2)]=D_2[V(T_1)\setminus V(T_2)]$,
  \item either $B$ and $B'$ are adjacent in $D_2$, or there exists a path of bags $B-B_s-B'$ in $D_2$ such that $\abs{V(B_s)}=3$ and $B_s$ is
    a star bag whose center is unmarked.
  \end{enumerate}

  We obtain $D_3$ from $D_2$ by removing the vertices of $V(T_2)\setminus V(T_1)$ that are not contained in $Q\cup \{y\}$, 
  and then recomposing all new recomposable marked edges.  
  Since recomposable marked edges only appeared in the part $V(T_2)\setminus V(T_1)$, 
  we have $D_3[V(T_1)\setminus V(T_2)]=D_2[V(T_1)\setminus V(T_2)]$. 
  Furthermore, the bag $B_s$ still exists in $D_3$ if it exists in $D_2$. This is because 
  \begin{itemize}
  \item the bag $B'$ contains at least two vertices of $Q$ in $D_2$, and thus $B'$ remains as a bag of same type in $D_3$, and
  \item the type of the marked edge connecting $B'$ and $B_s$ does not change when recompositions are applied.
  \end{itemize}
   Let $B_2$ be the bag of $D_3$ containing $y$. 
  We divide into cases depending on whether $B$ and $B_2$ are adjacent or not.

  \vskip 0.2cm
  \noindent\emph{\textbf{Case 1.} $B$ and $B_2$ are adjacent in $D_3$.}
  
  In this case, $D_3$ itself is a required decomposition. 
  Choose an unmarked vertex $z$ in $D_3$ represented by $v$. 
  Then $\limbtil_{D_3}[B, z]$ is the same as the split decomposition obtained from $(\limb_D[B,w]*x_1*x_2* \ldots *x_m)[Q]$ by recomposing all recomposable marked edges,
 which is either $\origin{A}$ or a one-vertex 	DH-extension of $\origin{A}$. 
   
  \vskip 0.2cm
  \noindent\emph{\textbf{Case 2.} There exists a path of bags $B-B_s-B_2$ such that $\abs{V(B_s)}=3$ and $B_s$ is
    a star bag whose center is unmarked.}
  
  Let $c$ be the center of $B_s$, and let $c_1$ and $c_2$ be two leaves of $B_s$ that are adjacent to $y$ and $v$, respectively.
  Choose an unmarked vertex $z$ of $D_3$ represented by $c_1$, and let $H:=\limbtil_{D_3}[B_s, z]$.
  By construction, $H$ is either $A$ or a one-vertex 	DH-extension of $A$. 

  If $H=A$, then we can regard $\limbtil_{D_3}[B, c]$ as a one-vertex DH-extension of $A$ with the new vertex $c$.  Therefore, we may assume that $H$ is a one-vertex DH-extension of $A$.
  Let $a$ be the newly added vertex $a$ in $H$.  

	We would like to remove the extended vertex $a$ from $H$, and then add $c$ to $H$ so that we obtain a new one-vertex extension of $A$ which contains $c$. 
	But this is not always possible because the operation of removing $a$ may disconnect the remaining part of $H$ from  $c$. We first deal with this special case.
  
  Assume $B_2$ is a star whose center is an unmarked vertex in $D_3$. In this case this center should be $z$.
  We obtain a new decomposition $D_4$ by applying a local complementation at $c$, removing $c$ and
  recomposing a marked edge incident with $B_s$.  Note that $D_4$ is exactly the decomposition obtained from the disjoint union of the two connected components of $D_3- V(B_s)$ by
  adding a marked edge $yv$, and thus it is canonical.  Also, $z$ is represented by $v$ in $D_4$, and we have $\limbtil_{D_4}[B, z]=H$.  Thus, $D_4$ is a required
  decomposition.

  Now we assume that $c$ is linked to at least two vertices of $H$ in $D_3$.  
  Since
  $H$ is a one vertex DH-extension of $A$ and $A$ was chosen as a canonical split decomposition of a connected graph, $\origin{H}-a$ is connected.  So, if we define $D_4$ as the canonical split decomposition obtained from $D_3- a$, then $D_4$ is connected and
  $\limbtil_{D_4}[B, c]$ can be regarded as a one vertex DH-extension of $A$. Therefore, $D_4$ is a required decomposition.
\end{proof}

\begin{proof}[Proof of Theorem \ref{thm:mainobs}] 
  We prove it by induction on $k$. 
  If $k=0$, then $\lrw(G)\ge 1$ and $G$ has an edge. 
  We may assume $k\ge 1$.
  
  Let $D$ be the canonical split decomposition of $G$.  Since $G$ has linear rank-width at least $k+1$, by Theorem~\ref{thm:mainchap2}, there exists a bag $B$ in $D$ with three connected components $T_1, T_2, T_3$ of $D- V(B)$
  such that $f_D(B, T_i)\ge k$ for each $i\in \{1,2,3\}$.

  We remove all connected components of $D-V(B)$ other than $T_1, T_2, T_3$, 
  and for each marked vertex $w$ in $B$ that was adjacent to some removed component, 
  we choose a vertex $w'$ in $D$ represented by $B$ and replace $w$ with $w'$.
  Note that the resulting decomposition is a canonical split decomposition of an induced subgraph of $G$.

  Now, if $B$ is a star whose center is unmarked, then we apply a local complementation at this vertex, and 
  otherwise, we change nothing.
  Then we obtain a new decomposition by removing all unmarked vertices in $B$.
 	Let us denote by $D'$ this canonical split decomposition and
	denote by $B'$ the bag modified from $B$. 
	
	For each $i\in \{1,2,3\}$, 
  let $v_i:=\zeta_b(D', B', T_i)$ and $w_i:=\zeta_c(D', B', T_i)$, and
  $z_i$ be an unmarked vertex of $D'$ represented by $v_i$ in $D'$.
  
  We define a new decomposition $D_1$ as follows.
	If $B'$ is a star bag centered at $v_3$, then let $D_1:=D'$.
	If $B$ is a complete bag, then let $D_1:=D'*z_3$.
	If $B$ is a star bag centered at $v_i\in \{v_1,v_2\}$, then let $D_1:=D*z_i*z_3$.  
		One easily checks that $D_1[\{v_1,v_2,v_3\}]$ is a star centered at $v_3$. Let $B^1:=D_1[\{v_1,v_2,v_3\}]$ and, for
  $j\in \{1,2,3\}$, let $T^1_j:=D_1[V(T_j)]$.  Note that $z_i$ is still represented by $v_i$.
  
  Since $v_1$ and $v_2$ are leaves of $B^1$, 
  for each $i\in \{1,2\}$, 
  $\limb_{D_1}[B^1, z_i]=T^1_i- w_i$ and 
  by the induction hypothesis,
  there exists a canonical split decomposition $F_i$ in $\Psi_{k-1}$ such that
  $\limbhat_{D_1}[B^1, z_i]$ has a vertex-minor isomorphic to $\origin{F_i}$.
  By applying Proposition~\ref{prop:replace} to $T^1_1$ and $T^1_2$, 
  we can obtain a canonical split decomposition $D_2$ satisfying that
  \begin{enumerate}
  \item $D_2[V(B^1)]=D_1[V(B^1)]$,
  \item $D_2[V(T_3)]$ is either $T_3$ or $T_3*w_3$, and
  \item for each $i\in \{1,2\}$, 
    $\limbtil_{D_2}[D_2[V(B^1)], z^2_i]$ is isomorphic to a canonical split decomposition in $\Psi_{k-1}^+$ for some unmarked vertex $z^2_i$ of $D_2$ represented by $v_i$.
  \end{enumerate}

  Let $B^2:=D_2[V(B^1)]$.  For each $i\in \{1,2\}$, let $T^2_i$ be the connected component of $D_2- V(B^2)$ containing $z^2_i$, and $w^2_i:=\zeta_c(D_2, B^2, T^2_i)$.
  Let $w^2_3:=w_3$, $z^2_3:=z_3$, and $T^2_3:=D_2[V(T_3)]$.

 Now, we want to transform $B^2$ into a star whose center is $v_2$ by applying local complementations at $z^2_3$ and $z^2_2$.
 We can verify that
  \begin{enumerate}
  \item $(D_2*z^2_3*z^2_2)[V(B^2)]$ is a star whose center is $v_2$,
  \item $(D_2*z^2_3*z^2_2)[V(T_1^2)]=T_1^2$, 
  \item $(D_2*z^2_3*z^2_2)[V(T_2^2)]=T_2^2*w_2^2*z_2^2$,
  \item $(D_2*z^2_3*z^2_2)[V(T_3^2)]=T_3^2*z_3^2*w_3^2$.
  \end{enumerate}
  We apply Proposition~\ref{prop:replace} to $D_2*z^2_3*z^2_2$ and obtain a canonical split decomposition $D_3$ so that
  \begin{enumerate}
  \item $D_3[V(B^2)]=(D_2*z_3^2*z_2^2)[V(B^2)]$ and $D_3[V(T_1^2)]=(D_2*z_3^2*z_2^2)[V(T_1^2)]$,
  \item $D_3[V(T_2^2)]$ is either $(D_2*z_3^2*z_2^2)[V(T_2^2)]$ or $(D_2*z_3^2*z_2^2)[V(T_2^2)]*w_2^2$, and
  \item 
    $\limbtil_{D_3}[D_3[V(B^2)], z_3^3]$ is isomorphic to a canonical split decomposition in $\Psi_{k-1}^+$ for some unmarked vertex $z_3^3$ of $D_3$ represented by $v_3$.
  \end{enumerate}
    Let $B^3:=D_3[V(B^2)]$.
 Let $T_3^3$ be the connected component of $D_3- V(B^3)$ containing $z_3^3$,
  and $w_3^3:=\zeta_c(D_3, B^3, T_3^3)$. 
  Note that $T_3^3- w_3^3\in \Psi_{k-1}^+$ and for $i\in \{1,2\}$, $z_i^2$ is still represented by $v_i$ in $D_3$.
  We define $T^3_1:=D_3[V(T^2_1)]$, $T^3_2:=D_3[V(T^2_2)]$ and 
  define $w^3_1, w^3_2, z^3_1, z^3_2$ as the same as 
  $w^2_1, w^2_2, z^2_1, z^2_2$, respectively.
  
  Now we claim that $D_3\in \Psi_k$ or $D_3*z_2^3\in \Psi_k$. 
  We observe two cases depending on whether $T^3_2$ is equal to $(D_2*z_3^2*z_2^2)[V(T_2^2)]$ or to $(D_2*z_3^2*z_2^2)[V(T_2^2)]*w_2^2$.  
  
  \vskip 0.2cm
  \noindent\emph{\textbf{Case 1.} 
    $T^3_2=(D_2*z_3^2*z_2^2)[V(T_2^2)]$.}
  
  We observe that
  $B^3$ is a star whose center is $v_2$, and
  the three connected components of $D_3- V(B^3)$ are
  $T_1^2$, $T_2^2*w_2^2*z_2^2$, and $T_3^3$.
  In this case, $D_3*z_2^2\in \Psi_k$ because 
  \begin{enumerate}
  \item $(D_3*z_2^2)[V(B^3)]$ is a complete bag, and
  \item the three components of $D_3- V(B^3)$ are
    $T_1^2*w_1^2$, $T_2^2*w_2^2$, and $T_3^3*w_3^3$,
  \end{enumerate}
  and the limbs of $D_3*z_2^2$ with respect to $B^3$ are 
  $T_1^2- w_1^2$, $T_2^2- w_2^2$, and $T_3^3- w_3^3$, which are contained in $\Psi_{k-1}^+$.

  \vskip 0.2cm
  \noindent\emph{\textbf{Case 2.} 
    $T^3_2=(D_2*z_3^2*z_2^2)[V(T_2^2)]*w_2^2$. }

  We observe that $B^3$ is a star centered at $v_2$, and the three components of $D_3- V(B^3)$ are $T_1^2$, $T_2^2*w_2^2*z_2^2*w_2^2=T_2^2\wedge w_2^2z_2^2$, and $T_3^3$.  We can see that
  $D_3\in \Psi_k$ because the limbs with respect to $B^3$ are $T_1^2- w_1^2$, $T_2^2- w_2^2$, and $T_3^3- w_3^3$, which are contained in $\Psi_{k-1}^+$.
  
  \vskip 0.2cm
  We conclude that $G$ has a vertex-minor isomorphic to $\origin{D_3}$ where $D_3\in \Psi_{k}$, as required.
\end{proof}

In order to prove that $\Psi_k$ is a minimal set of canonical split decompositions of distance-hereditary vertex-minor obstructions for linear rank-width at most $k$, we need to prove that for every
$D\in \Psi_k$, $\origin{D}$ has linear rank-width $k+1$ and all its proper vertex-minors have linear rank-width at most $k$. 
However, while $\lrw(\origin{D}) = k+1$ for all $D\in \Psi_k$, they are not minimal with respect to having linear rank-width $k+1$. For instance for many canonical split
decompositions $D$ in $\Psi_1$, $\origin{D}$ is not a vertex-minor obstruction for linear rank-width $1$ as it contains either $\alpha_1$ or $\gamma_1$ as a vertex-minor
(see Section \ref{sec:charlrw1}). %
We guess that the following set $\Phi_k$ would form a minimal set of distance-hereditary vertex-minor obstructions, but we leave it as an open problem.

\begin{enumerate}
\item $\Phi_0:=\{K_2\}$.
\item For $k\ge 0$, let $\Phi_{k+1}:=\Delta(\Phi_k)$.
\end{enumerate}

Our intuition is supported by the following.

\begin{proposition}\label{prop:phik}  
Let $k$ be a non-negative integer and let $D\in \Phi_k$. Then $\lrw(\origin{D}) = k+1$ and every proper vertex-minor of $\origin{D}$ has linear rank-width at most $k$. 
\end{proposition}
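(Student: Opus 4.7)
The plan is to proceed by induction on $k$. The base case $k=0$ is immediate since $\Phi_0 = \{K_2\}$, $\lrw(\origin{K_2}) = 1$, and the only proper vertex-minor is $K_1$ with $\lrw = 0$. For the inductive step I fix $D \in \Phi_{k+1} = \Delta(\Phi_k)$ built from $D_1, D_2, D_3 \in \Phi_k$ via one-vertex DH-extensions $D_i'$ (with new unmarked vertices $w_i$), the transformations $D_i''$, and a central bag $B$ on marked vertices $\{v_1, v_2, v_3\}$ joined by marked edges to the $w_i$'s; let $T_1, T_2, T_3$ be the three components of $D \setminus V(B)$. The goal is to show $\lrw(\origin{D}) = k+2$ together with the proper-vertex-minor bound $\leq k+1$.

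For the lower bound $\lrw(\origin{D}) \geq k+2$, I verify that the canonical limb $\limbtil_D[B, y_i]$ for any unmarked $y_i \in T_i$ is locally equivalent to $D_i' \setminus w_i$, whose origin equals $\origin{D_i}$. This is precisely what the design of $\Delta$ is meant to ensure: the three cases for $D_i''$ (local complementation $*w_i$ if $v_i$ lies in a complete bag, pivot $\wedge w_i z_i$ if $v_i$ is a star-center, identity if $v_i$ is a star-leaf) are inverse to the three cases in the definition of the limb based on the type of $B$ at $v_i$, so the operations cancel. By Proposition \ref{prop:limb} and the inductive hypothesis, $f_D(B, T_i) = \lrw(\origin{D_i}) = k+1$ for every $i$. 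Three components at $B$ with $f$-value $k+1$ violates condition (2) of Theorem \ref{thm:mainchap2} for $\lrw \leq k+1$, hence $\lrw(\origin{D}) \geq k+2$.

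For the upper bound I aim to show $\pw(T_D) \leq k+1$ and invoke Proposition \ref{prop:lrwpw}. The tree $T_D$ is a central node joined to three subtrees $T_{D_i''} = T_{D_i'}$; an auxiliary sub-claim proved alongside the main induction says $\pw(T_{D_i'}) \leq k$, relying on the fact that a one-vertex DH-extension affects $T_{D_i}$ only either via twin addition (leaving $T_{D_i}$ unchanged) or via adjoining a single leaf bag, and adding a leaf to the balanced recursive tree of a $\Phi_k$ element preserves $\pw$, as can be checked inductively with Theorem \ref{thm:maintreeforpathwidth}. Then applying Theorem \ref{thm:maintreeforpathwidth}(3) with a path through the central node and into two of the subtrees gives $\pw(T_D) \leq k+1$, hence $\lrw(\origin{D}) \leq k+2$.

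For the proper-vertex-minor bound, using Lemma \ref{lem:vm-rw} and Remark \ref{rem:localdecom} it suffices to show, for every canonical decomposition $D^{\star}$ locally equivalent to $D$ and every unmarked $v \in V(\origin{D^{\star}})$, that $\lrw(\origin{D^{\star}} \setminus v) \leq k+1$. Since $B$ contains only marked vertices, $v$ lies in some $T_i$-region, WLOG $i=1$. The strategy is to show that in the canonical decomposition of $\origin{D^{\star}} \setminus v$ the new limb in direction $T_1$ has origin that is a proper vertex-minor of $\origin{D_1}$, so by the induction hypothesis its $\lrw$ drops to at most $k$; then at $B$ only the $T_2$- and $T_3$-sides still attain $f$-value $k+1$, and checking condition (2) of Theorem \ref{thm:mainchap2} at every bag (with the bags inside $D_2''$ and $D_3''$ handled by applying the main induction recursively) yields $\lrw(\origin{D^{\star}} \setminus v) \leq k+1$. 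The main obstacle is this identification step: deleting $v$ may render the marked edge $w_1 v_1$ recomposable, merging $B$ with a part of the $T_1$-region and altering the decomposition tree, and unwinding the transformations $D_1 \to D_1' \to D_1''$ in order to exhibit the new $T_1$-limb's origin as a concrete proper vertex-minor of $\origin{D_1}$ requires careful use of Lemmas \ref{lem:represent}, \ref{lem:pivotdecom}, and the simplification Lemma \ref{lem:reduce}, together with a case split on how $v$'s deletion affects the link structure at $w_1$.
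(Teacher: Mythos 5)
Your argument for the lower bound (three limbs at $B$ with $f$-value $k+1$, via Proposition~\ref{prop:limb} and Theorem~\ref{thm:mainchap2}) matches the paper's intent, but the other two parts have genuine gaps. The most serious is the proper-vertex-minor statement: you describe a strategy and then explicitly concede the ``main obstacle'' (identifying the new $T_1$-limb of an arbitrary locally equivalent decomposition $D^{\star}$ after deleting $v$, handling recomposable marked edges, unwinding $D_1\to D_1'\to D_1''$) without resolving it, so this part is not a proof. The idea you are missing is the paper's Lemma~\ref{lem:locphi}: $\Phi_k$ is closed under local complementation at unmarked vertices. Combined with Bouchet's Lemma~\ref{lem:bouchet}, every elementary proper vertex-minor is locally equivalent to $\origin{\widetilde{D}}\setminus v$ for some $\widetilde{D}\in\Phi_k$ and unmarked $v$ of $\widetilde{D}$; since the central bag of $\widetilde{D}$ has no unmarked vertices, $v$ lies inside one of the three limbs, which belongs to $\Phi_{k-1}$, and the secondary induction closes immediately via Theorem~\ref{thm:mainchap2}. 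This closure lemma is exactly what removes the case analysis you anticipate having to do, and without it (or an equivalent structural statement about locally equivalent decompositions of $\Phi_k$ members) your plan does not go through.

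The upper bound is also not established as written. You route it through $\pw(T_D)\le k+1$ and Proposition~\ref{prop:lrwpw}, which requires the sub-claim $\pw(T_{D_i'})\le k$ for every one-vertex DH-extension $D_i'$ of $D_i\in\Phi_k$. Your justification (``one-vertex extension adds a twin or a single leaf bag, and adding a leaf preserves $\pw$, checked inductively'') is incomplete and the induction does not close in the form you state it: first, a one-vertex extension can also subdivide a marked edge, i.e.\ insert a node in the middle of an edge of $T_{D_i}$, and both leaf-addition and subdivision can in general increase the path-width of a tree (e.g.\ subdividing the edges of $K_{1,3}$ raises its path-width from $1$ to $2$); second, the subtrees of $T_{D_i}$ hanging at its central node are themselves already extension-trees of $\Phi_{k-1}$ members, so a further extension of $D_i$ lands inside one of them and produces a \emph{twice}-modified tree, which your inductive hypothesis does not cover -- one needs a strengthened (rooted) statement, and you give no argument for it. The paper avoids this entirely: it proves $\lrw(\origin{D})=k+1$ directly by induction using Theorem~\ref{thm:mainchap2} on the limb structure, never passing through path-width of the decomposition tree.
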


We need the following two lemmas.

\begin{lemma}\label{lem:locphi} Let $D\in \Phi_k$ and $v$ be an unmarked vertex in $D$.  Then $D*v\in \Phi_k$.
\end{lemma}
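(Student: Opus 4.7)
The plan is to prove this by induction on $k$, exploiting the recursive structure of $\Phi_k$ together with Remark~\ref{rem:localdecom}, which guarantees that local complementation at an unmarked vertex preserves the decomposition tree of a canonical decomposition. For the base case $k = 0$, we have $D = K_2$, and local complementation at either unmarked vertex of $K_2$ yields $K_2$ itself, so $D * v \in \Phi_0$.

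For the inductive step, assume the claim holds for $\Phi_{k-1}$ and let $D \in \Phi_k = \Delta(\Phi_{k-1})$. By the definition of $\Delta$, $D$ arises from three decompositions $D_1, D_2, D_3 \in \Phi_{k-1}$: each $D_i$ is extended to a one-vertex DH-extension $D_i'$ with new unmarked vertex $w_i$, then transformed into $D_i''$ according to the type of the central bag $B$ (with vertices $v_1, v_2, v_3$ and of type $K$ or $S$), and finally joined to $B$ via the marked edges $v_i w_i$. Since every vertex of $B$ is the endpoint of a marked edge, every unmarked vertex of $D$ lies in one of the branches $D_i''$; we may assume $v \in D_1''$.

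I then split into cases according to whether some representative of $v$ in $D$ reaches $v_1 \in B$. In the first case, no representative reaches $B$, so $D * v$ affects only bags inside $D_1''$, leaves $B$, $D_2''$, $D_3''$ unchanged, and replaces $D_1''$ by $D_1'' * v$. Since $v$'s representatives do not reach $w_1$ either, local complementation at $v$ commutes with whichever operation ($*w_1$, $\wedge w_1 z_1$, or identity) was used to form $D_1''$ from $D_1'$, so $D_1'' * v$ equals that operation applied to $D_1' * v$. As $D_1' * v$ is still a one-vertex DH-extension of $D_1 * v$, and $D_1 * v \in \Phi_{k-1}$ by the induction hypothesis, $D * v \in \Delta(\Phi_{k-1}) = \Phi_k$.

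In the second case, some representative of $v$ reaches $v_1$, so $D * v$ additionally replaces $B$ by $B * v_1$, whose type may differ: complete $B$ becomes a star centered at $v_1$; a star centered at $v_1$ becomes complete; a star with $v_1$ as leaf is unchanged. For each sub-case I reinterpret $D * v$ as an instance of the $\Delta$-construction with central bag $B * v_1$ and the same three underlying $\tilde{D}_i \in \Phi_{k-1}$ (the first being $D_1 * v$, the others being $D_2, D_3$ themselves), but with possibly different choices of the permitted operation on the branches to match the new compatibility conditions at each $v_i$. The identity $G \wedge xy = G * x * y * x$ from Lemma~\ref{lem:pivotdecom} and the involutory nature of local complementation allow us to shuffle the $*w_i$ and $\wedge w_i z_i$ operations so the branches fit one of the three allowed forms. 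The main obstacle is precisely this bookkeeping: one must verify that for $i = 2, 3$ the branches, which are not directly touched by the local complementation, can nevertheless be re-expressed in the form required by the new type of $B * v_1$, and that the modified $D_1''$ falls into the corresponding form as well. Once this case check is completed using Lemmas~\ref{lem:localdecom} and~\ref{lem:pivotdecom} together with the inductive hypothesis, the conclusion $D * v \in \Phi_k$ follows.
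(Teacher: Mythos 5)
Your induction skeleton (on $k$, using the recursive $\Delta$-structure) is the same as the paper's, but the substance of the inductive step is missing, and two of the structural claims you rely on are incorrect as stated. In your second case you assert that the branches $D_2''$ and $D_3''$ are ``not directly touched by the local complementation.'' This is false in general: if $v_1$ represents $v$ and $v_j$ is adjacent to $v_1$ in $B$ (which happens whenever $B$ is complete, or when $v_1$ or $v_j$ is the center of the star), then the marked vertex $w_j$ also represents $v$, via the alternating path $w_j$--$v_j$--$v_1$--$w_1$--$\cdots$--$v$, so $D*v$ replaces $D_j''$ by $D_j''*w_j$; your plan of keeping $D_2,D_3$ untouched and merely ``re-expressing'' the branches therefore does not describe $D*v$. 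Moreover, the case analysis you yourself call ``the main obstacle'' is exactly the content of the lemma and is left unverified, so the proof is not complete. Even your first case has a gap in the pivot sub-case: by Lemma~\ref{lem:represent} your hypothesis (no representative of $v$ in $B$) only excludes the adjacency of $v$ with $w_1$ in $\origin{D_1''}$, which for $D_1''=D_1'\wedge w_1z_1$ translates to non-adjacency of $v$ with $z_1$ in $\origin{D_1'}$ and says nothing about $v$ and $w_1$ there; local complementation at $v$ need not commute with $\wedge w_1z_1$ when $v$ is adjacent to exactly one end of the pivot pair (already on a $4$-cycle $w\,z\,v\,a$ one has $(G\wedge wz)*v\neq (G*v)\wedge wz$). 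One can repair this by allowing a different choice of $z_1$ or of the one-vertex extension, but that is precisely an argument ``up to local equivalence of the limb,'' which you never make.

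This is exactly what Proposition~\ref{prop:limb} is for, and it is how the paper proves the lemma: by construction $D$ has a bag $B$ whose three canonical limbs are $D_1,D_2,D_3\in\Phi_{k-1}$; since $D*v$ is locally equivalent to $D$ with the same decomposition tree, Proposition~\ref{prop:limb} gives that the three limbs of $D*v$ at the corresponding bag ($B$ or $B*v'$) are locally equivalent to $D_1,D_2,D_3$, hence lie in $\Phi_{k-1}$ by the induction hypothesis applied along the local-equivalence sequence; and a canonical decomposition with a three-vertex central bag whose limbs lie in $\Phi_{k-1}$ is again of the form produced by $\Delta(\Phi_{k-1})$. Recasting your argument in terms of limbs, rather than trying to commute $*v$ explicitly with the operations $*w_1$ and $\wedge w_1z_1$ on each branch, both closes the gaps above and makes the deferred bookkeeping unnecessary.
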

	
\begin{proof} 
  We proceed by induction on $k$.  We may assume that $k\ge 1$.  By the construction, there exists a bag $B$ of $D$ such that the three limbs $D_1$, $D_2$, $D_3$ in $D$ corresponding to the bag $B$
  are contained in $\Phi_{k-1}$. 
	
  Let $B':=B$ or $B':=B*v'$ be a bag of $D*v$ depending on whether $v$ has a representative $v'$ in $B$. Let $D_1'$, $D_2'$ and $D_3'$ be the three limbs of $D*v$
  corresponding to the bag $B'$ such that $D_i'$ and $D_i$ came from the same component of $D- V(B)$.  One checks by Proposition~\ref{prop:preservelrw} that $D_i'$ is locally equivalent to
  $D_i$.  So by the induction hypothesis, $D_i'\in \Phi_{k-1}$.  And $D*v$ is the canonical split decomposition obtained from $D_i'$ following the construction of
  $\Phi_k$.  Therefore, $D*v\in \Phi_k$.
\end{proof}

\begin{lemma}[Bouchet~\cite{Bouchet1988}]\label{lem:bouchet} Let $G$ be a graph, $v$ be a vertex of $G$ and $w$ be an arbitrary neighbor of $v$.  Then every elementary vertex-minor obtained from $G$ by
  deleting $v$ is locally equivalent to either $G- v$, $G* v- v$, or $G\wedge vw- v$.
\end{lemma}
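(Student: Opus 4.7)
I plan to prove the lemma by establishing the following invariance: the unordered collection of three local-equivalence classes $\{[G\setminus v],\ [G*v\setminus v],\ [G\wedge vw\setminus v]\}$ (of graphs on $V(G)\setminus \{v\}$, with $w$ a fixed neighbor of $v$) coincides with the analogous collection computed after replacing $G$ by $G*x$, for any vertex $x$. Since any $H$ locally equivalent to $G$ is reachable from $G$ by a finite sequence of local complementations, iterating the invariance transports the collection for $G$ to that for $H$; but $[H \setminus v]$ trivially lies in the latter collection (it is its first member), so it lies in the former, which is exactly the conclusion of the lemma.

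The two foundational tools are (i) the edgewise identity $(G*x)\setminus v = (G\setminus v)*x$ for any $x \ne v$, verified by checking for each pair of non-$v$ vertices whether both lie in $N_G(x)$; and (ii) the standard local-complementation identities $G*x*x = G$, $G*x*y = G*y*x$ whenever $xy \notin E(G)$, and the pivot identity $G*x*y*x = G\wedge xy = G*y*x*y$ whenever $xy \in E(G)$, yielding in particular the expansion $G\wedge vw = G*v*w*v$.

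The invariance check splits on $x = v$ versus $x \ne v$. When $x = v$: we have $(G*v)\setminus v = [G*v\setminus v]$ and $(G*v*v)\setminus v = [G\setminus v]$; using the expansion $G*v \wedge vw = G*w*v = (G\wedge vw)*w$ (via the pivot identity applied to the edge $vw$), together with tool (i), we obtain $(G*v\wedge vw)\setminus v \sim [G\wedge vw\setminus v]$. When $x \ne v$: tool (i) immediately gives $(G*x)\setminus v \sim [G\setminus v]$; the rewriting $G*x*v = G*v*x$ (when $xv \notin E(G)$) or $G*x*v = (G\wedge vx)*x$ (when $xv \in E(G)$, via the pivot identity) yields $(G*x*v)\setminus v \sim [G*v\setminus v]$ or $[G\wedge vx\setminus v]$ respectively; and a parallel rewriting of $(G*x)\wedge vw'$ for $w'$ a neighbor of $v$ in $G*x$ handles the third candidate.

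The hardest step, invoked repeatedly above, is the sub-claim that the pivot class is independent of the chosen neighbor: $[G\wedge vy_1\setminus v] = [G\wedge vy_2\setminus v]$ for any two neighbors $y_1, y_2$ of $v$ in $G$. Without this, the case analysis leaves too many candidate classes for the pivot. I would isolate and prove this sub-claim separately: expanding $G\wedge vy_i = G*v*y_i*v$, it suffices by (i) to exhibit a sequence of local complementations at vertices other than $v$ transforming $G*v*y_1*v$ into $G*v*y_2*v$. When $y_1y_2 \in E(G*v)$ one applies the pivot identity to $y_1, y_2$ within the $G*v$ layer; otherwise $*y_1$ and $*y_2$ commute directly. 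This completes the invariance argument and hence the proof.
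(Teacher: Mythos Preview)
The paper does not give its own proof of this lemma; it is quoted from Bouchet~\cite{Bouchet88} and used as a black box in the proof of Proposition~\ref{prop:phik}. So there is nothing in the paper to compare your argument against. That said, your overall plan---showing that the unordered triple of local-equivalence classes $\{[G\setminus v],[G*v\setminus v],[G\wedge vw\setminus v]\}$ is stable under replacing $G$ by $G*x$, and then iterating along a word of local complementations---is exactly the standard route to this result, and your tools (i) and (ii) are correct.

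Where your sketch becomes thin is precisely the sub-claim you flag as hardest: that $[G\wedge vy_1\setminus v]=[G\wedge vy_2\setminus v]$ for any two neighbours $y_1,y_2$ of $v$. Your proposed justification (``apply the pivot identity to $y_1,y_2$ within the $G*v$ layer; otherwise $*y_1$ and $*y_2$ commute'') does not actually produce a sequence of local complementations at vertices \emph{other than $v$} taking $G*v*y_1*v$ to $G*v*y_2*v$: the final $*v$ on each side blocks the commutation you need. The clean way to do this step is to show that in $G\wedge vy_1$ the pair $y_1y_2$ is an edge (the swap of $v$ and $y_1$ makes the new $y_1$ inherit the old adjacency $vy_2$), and then use that $(G\wedge vy_1)\wedge y_1y_2$ is locally equivalent to $G\wedge vy_2$; since $\wedge y_1y_2=*y_1*y_2*y_1$ involves only non-$v$ vertices, tool (i) then gives $(G\wedge vy_2)\setminus v\sim (G\wedge vy_1)\setminus v$ immediately. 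The same device handles the ``parallel rewriting'' you defer for the third candidate in the case $x\neq v$, which otherwise branches into several sub-cases that your sketch does not cover. With this correction your proof goes through.
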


\begin{proof}[Proof of Proposition \ref{prop:phik}] By construction, it is not hard to prove by induction with the help of Theorem \ref{thm:mainchap2} that $\lrw(\origin{D})=k+1$ for every
  split decomposition $D\in \Phi_k$. For the second statement, by Lemmas~\ref{lem:locphi} and \ref{lem:bouchet}, it is sufficient to show that if $D\in \Phi_k$ and $v$ is an unmarked vertex of $D$,
  then $\origin{D}- v$ has linear rank-width at most $k$.  We use induction on $k$ to prove it. We may assume that $k\ge 1$.  Let $B$ be the bag of $D$ such that $D- V(B)$ has exactly
  three limbs that are contained in $\Phi_{k-1}$.  Clearly there is no other bag having the same property.  Since $B$ has no unmarked vertices, $v$ is contained in one of the limbs
  $D'$, and by induction hypothesis, $\origin{D'}- v$ has linear rank-width at most $k-1$.  Therefore, by Theorem~\ref{thm:mainchap2}, $\origin{D}- v$ has linear rank-width at most
  $k$.
\end{proof}

We finish by pointing out that it is proved in \cite{JKO2014} that the number of distance-hereditary vertex-minor obstructions for linear rank-width $k$ is at least $2^{\Omega(3^k)}$. One can easily
check by induction that the number of graphs in $\Phi_k$ is bounded by $2^{O(3^k)}$. Therefore, we can conclude that the number of distance-hereditary vertex-minor obstructions for linear rank-width
$k$ is equal to $2^{\theta(3^k)}$. 

\section{Simpler proofs for the characterizations of graphs of linear rank-width at most $1$}\label{sec:charlrw1}

In this section, we obtain simpler proofs for known characterizations of the graphs of linear rank-width at most $1$ using Theorem~\ref{thm:mainchap2}.  Theorem~\ref{thm:charlrw1} was originally
proved by Bui-Xuan, Kant\'{e}, and Limouzy~\cite{Bui-XuanKL13}.

\begin{theorem}[Bui-Xuan, Kant\'{e}, and Limouzy~\cite{Bui-XuanKL13}]\label{thm:charlrw1}
  Let $G$ be a connected graph and let $D$ be the canonical split decomposition of $G$.
  The following two are equivalent.
  \begin{enumerate}[(1)]
  \item $G$ has linear rank-width at most $1$.
  \item $G$ is distance-hereditary and $T_D$ is a path.
  \end{enumerate}
\end{theorem}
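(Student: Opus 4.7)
The plan is to apply Theorem~\ref{thm:mainchap2} with $k=1$ in both directions, relying on the classical fact (Oum) that the graphs of rank-width at most $1$ are exactly the distance-hereditary graphs.

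The implication $(2)\Rightarrow(1)$ is the easier direction. Since $G$ is distance-hereditary, Theorem~\ref{thm:mainchap2} applies; I would choose the path in condition~(3) to be $P:=T_D$ itself. Every component of $T_D\setminus v$ is then a subpath of $P$ and therefore contains a node of $P$, so no component $C$ of $D\setminus V(\bag{D}{v})$ has $T_C$ disjoint from $V(P)$, and the condition $f_D(\bag{D}{v},C)\le 0$ is vacuously satisfied. Hence $\lrw(G)\le 1$.

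For $(1)\Rightarrow(2)$, first observe that $\lrw(G)\le 1$ implies $\rw(G)\le 1$, so $G$ is distance-hereditary. Applying Theorem~\ref{thm:mainchap2}(3) with $k=1$ then yields a path $P$ in $T_D$ such that $f_D(\bag{D}{v},C)=0$ for every node $v$ of $P$ and every component $C$ of $D\setminus V(\bag{D}{v})$ whose decomposition tree $T_C$ avoids $V(P)$. The key claim I would establish is that $f_D(B,C)\ge 1$ for every bag $B$ of $D$ and every non-empty component $C$ of $D\setminus V(B)$. Granting this, no component $C$ of $D\setminus V(\bag{D}{v})$ can have $T_C$ disjoint from $V(P)$, so every component of $T_D\setminus v$ contains a node of $P$. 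Since $P\setminus v$ breaks into at most two subpaths, $v$ has at most two neighbors in $T_D$, all lying on $P$. Running this over all $v\in V(P)$ and invoking connectedness of $T_D$ forces $T_D=P$, a path.

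The main obstacle is establishing the claim. In a canonical decomposition of a distance-hereditary graph every bag has type $K$ or $S$ with at least three vertices, and by Theorem~\ref{thm:can-forbid} the marked edge joining $C$ to $B$ has a type different from $KK$ and $S_pS_c$. I would carry out a short case analysis on the three cases in the definition of $\limb_D[B,y]$ from Section~\ref{sec:dhandthread}, namely $B$ of type $K$, type $S$ with $\zeta_b(D,B,C)$ a leaf, or type $S$ with $\zeta_b(D,B,C)$ the center, together with the admissible types of the bag of $C$ adjacent to $B$. In every admissible configuration $\limbhat_D[B,y]$ has at least two vertices joined by an edge, hence $\lrw(\limbhat_D[B,y])\ge 1$. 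The delicate sub-case is when the bag of $C$ adjacent to $B$ has exactly three vertices, since then the limb may shrink to two vertices and one must check directly that the surviving edge is present.
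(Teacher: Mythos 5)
Your proposal is correct and follows essentially the same route as the paper: both directions rest on Theorem~\ref{thm:mainchap2} with $k=1$ together with Oum's fact that rank-width at most $1$ characterizes distance-hereditary graphs, and the fact that $f_D(B,C)\ge 1$ for every component (which the paper uses implicitly and which indeed reduces, as you note, to excluding the forbidden $KK$ and $S_pS_c$ marked edges of Theorem~\ref{thm:can-forbid}). The only cosmetic differences are that the paper proves $(2)\Rightarrow(1)$ by writing down an explicit width-$1$ layout bag by bag and derives $(1)\Rightarrow(2)$ from condition~(2) of Theorem~\ref{thm:mainchap2} (a bag with three neighbors gives three components with $f_D\ge 1$), whereas you invoke condition~(3) in both directions.
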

\begin{proof}
  We first prove that (2) implies (1). Let $T_D:=u_1u_2 \cdots u_m$.  For each $1\le i\le m$, we take any ordering $L_i$ of unmarked vertices in $\bag{D}{u_i}$.  
  Since $G$ is distance-hereditary, by Theorem~\ref{thm:Bouchet88}, each bag of $D$ is a complete graph or a star.
  Thus, we can easily check that
  $L_1\oplus L_2\oplus \ldots \oplus L_m$ is a linear layout of $G$ having width at most $1$.

  We prove that (1) implies (2). Suppose $G$ has linear rank-width at most $1$. 
  From the known fact that a connected graph has rank-width at most $1$ if and only if it is distance-hereditary~\cite{Oum05}, 
  $G$ is distance-hereditary. 
  Suppose $T_D$ is not a path. Then there exists a bag $B$ of $D$ such that 
  $B$ has at least three neighbor bags in $D$.
  Thus, 
  $D- V(B)$ has at least three components $T$ where $f_D(B, T)\ge 1$.
  By Theorem~\ref{thm:mainchap2}, $G$ has linear rank-width at least $2$, which is a contradiction.
\end{proof}

From Theorem~\ref{thm:charlrw1},
we have a linear-time algorithm to recognize the graphs of linear rank-width at most $1$.

\begin{theorem}\label{thm:recoglrw1}
  For a given graph $G$, 
  we can test whether $G$ has linear rank-width at most $1$ or not in time $\mathcal{O}(\abs{V(G)}+\abs{E(G)})$.
\end{theorem}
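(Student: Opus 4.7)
The plan is to give a direct algorithm based on the characterization in Theorem~\ref{thm:charlrw1}: $G$ has linear rank-width at most $1$ if and only if $G$ is distance-hereditary and its decomposition tree $T_D$ is a path. First I would handle the trivial reductions: the linear rank-width of a graph is the maximum linear rank-width over its connected components, so in linear time we split $G$ into its connected components via a BFS/DFS, and the answer is ``yes'' if and only if every component has linear rank-width at most $1$. From now on I assume $G$ is connected.

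Next I would invoke Theorem~\ref{thm:CED} to compute the canonical decomposition $D$ of $G$ in time $\mathcal{O}(|V(G)| + |E(G)|)$. Having $D$ in hand, I would check in linear time the two conditions that, by Theorems~\ref{thm:Bouchet88}(1) and~\ref{thm:charlrw1}, are jointly equivalent to $\lrw(G) \leq 1$: namely, (a) every bag of $D$ is of type $K$ or $S$ (certifying that $G$ is distance-hereditary), and (b) the decomposition tree $T_D$ is a path. Condition (a) amounts to scanning each bag once and testing whether its underlying graph is either complete or a star; this can be done in time proportional to the total size of the bags, which is $\mathcal{O}(|V(G)| + |E(G)|)$ since $D$ has that size. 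Condition (b) reduces to verifying that every node of $T_D$ has degree at most $2$ in $T_D$, which is a single traversal of $T_D$.

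If both conditions hold I output ``yes''; otherwise I output ``no''. Correctness follows immediately from Theorem~\ref{thm:charlrw1}, and each of the three phases (splitting into components, computing the canonical decomposition, and verifying the two structural properties) runs in $\mathcal{O}(|V(G)| + |E(G)|)$ time, giving the claimed overall bound. There is no real obstacle here since the heavy lifting is already packaged in Theorem~\ref{thm:CED}; the only care needed is to make sure the type check on bags and the degree check on nodes of $T_D$ are implemented so that they touch each vertex and edge of $D$ only a constant number of times.
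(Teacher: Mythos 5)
Your proposal is correct and follows essentially the same route as the paper: compute the canonical decomposition of each connected component via Theorem~\ref{thm:CED}, then verify that $T_D$ is a path and that no bag is prime (equivalently, every bag is of type $K$ or $S$), with correctness given by Theorem~\ref{thm:charlrw1}. The only cosmetic difference is that you phrase the bag test through Theorem~\ref{thm:Bouchet88}(1) as a distance-hereditarity check, which is the same condition the paper checks directly.
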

\begin{proof}
  We first compute the canonical split decomposition $D$ of each connected component of $G$ using the algorithm from Theorem~\ref{thm:CED}. It takes $\mathcal{O}(\abs{V(G)}+\abs{E(G)})$ time.
  Furthermore, this algorithm outputs the type of each bag together.
   Note that each bag of a canonical split decomposition of a connected distance-hereditary graph is either a complete graph or a star by Theorem~\ref{thm:Bouchet88}.
	Thus, if there is a prime bag, then we answer that $G$ has linear rank-width more than $1$.
 
  Additionally, we check whether $T_D$ is a path or not.
  By Theorem~\ref{thm:charlrw1}, if $T_D$ is a path and each bag is not prime, then we conclude that $G$ has linear rank-width at most $1$, and otherwise, $G$ has linear rank-width at least $2$.
\end{proof}

The list of induced subgraph obstructions for graphs of linear rank-width at most $1$ was characterized by Adler, Farley, and Proskurowski~\cite{AdlerFP11}.
The obstructions consist of the known obstructions for distance-hereditary graphs~\cite{BandeltM86}, and the set $\obt$ of the induced subgraph obstructions for graphs of linear rank-width at most $1$ that are distance-hereditary. See Figure~\ref{fig:obslrw1} for the list of obstructions $\alpha_i, \beta_j, \gamma_k$ in $\obt$ where $1\le i\le 4$, $1\le j\le 6$, $1\le k\le 4$.
This set  $\obt$ can be obtained from Theorem~\ref{thm:charlrw1} in a much easier way than the previous result.

	A graph $H$ is called a
	\emph{pivot-minor} of a graph $G$ if $H$ can be obtained from $G$ by applying a sequence of pivoting on  edges and deletions of vertices.

\begin{figure}
  \tikzstyle{v}=[circle, draw, solid, fill=black, inner sep=0pt, minimum width=3pt]
  \centering
  \begin{tikzpicture}[scale=0.5]
    \node[v](v1) at (0,.8){};
    \node[v](v2) at (0,1.6){};
    \node[v](v3) at (-.8,-.5){};
    \node[v](v4) at (.8,-.5){};
    \node[v](v5) at (-1.6,-1){};
    \node[v](v6) at (1.6,-1){};
    \draw (v1)--(v3)--(v4)--(v1);
    \draw (v1)--(v2);
    \draw (v3)--(v5);
    \draw (v4)--(v6);
    \node [label=$\alpha_1$] at (0,-2.7) {};
  \end{tikzpicture}\quad
  \begin{tikzpicture}[scale=0.5]
    \node[v](v1) at (0,2){};
    \node[v](v2) at (1-.2,2){};
    \node[v](v3) at (1.5,1+.2){};
    \node[v](v4) at (1.5,3-.2){};
    \node[v](v5) at (2+.2,2){};
    \node[v](v6) at (3,2){};
    \draw (v1)--(v2)--(v3)--(v5)--(v6);
    \draw (v2)--(v4)--(v5);
    \draw (v2)--(v5);
    \node [label=$\alpha_2$] at (1.5,0-.7) {};
  \end{tikzpicture}\quad
  \begin{tikzpicture}[scale=0.5]
    \node[v](v2) at (1-.2-.6,2){};
    \node[v](v3) at (1.5,1+.2-.6){};
    \node[v](v4) at (1.5,3-.2+.6){};
    \node[v](v5) at (2+.2+.6,2){};
    \node[v](v6) at (1.5,2){};
    \node[v](v7) at (1,2.5){};
    \draw (v2)--(v3)--(v4)--(v5)--(v2);
    \draw (v2)--(v4);
    \draw (v3)--(v5);
    \draw (v6)--(v7);
    \node [label=$\alpha_3$] at (1.5,0-.7) {};
  \end{tikzpicture}\quad
  \begin{tikzpicture}[scale=0.5]
    \node[v](v2) at (0.2,2){};
    \node[v](v3) at (0.7-.3,1.5){};
    \node[v](v4) at (2.3+.3,2.5){};
    \node[v](v5) at (2.8,2){};
    \node[v](v6) at (1.5,3){};
    \node[v](v7) at (1.5,1){};
    \draw (v2)--(v4)--(v5)--(v3)--(v2);
    \draw (v6)--(v2);\draw (v6)--(v3);\draw (v6)--(v4);\draw (v6)--(v5);
    \draw (v7)--(v2);\draw (v7)--(v3);\draw (v7)--(v4);\draw (v7)--(v5);
    \node [label=$\alpha_4$] at (1.5,0-.7) {};
  \end{tikzpicture}
  
  \begin{tikzpicture}[scale=0.5]
    \node[v](v1) at (0,2){};
    \node[v](v2) at (1-.2,2){};
    \node[v](v3) at (1.5,1+.2){};
    \node[v](v4) at (1.5,3-.2){};
    \node[v](v5) at (2+.2,2){};
    \node[v](v6) at (3,2){};
    \draw (v1)--(v2)--(v3)--(v5)--(v6);
    \draw (v2)--(v4)--(v5);
    \node [label=$\beta_1$] at (1.5,0-.5) {};
  \end{tikzpicture}\quad
  \begin{tikzpicture}[scale=0.5]
    \node[v](v1) at (0,2){};
    \node[v](v2) at (1-.2,2){};
    \node[v](v3) at (1.5,1+.2){};
    \node[v](v4) at (1.5,3-.2){};
    \node[v](v5) at (2+.2,2){};
    \node[v](v6) at (3,2){};
    \draw (v1)--(v2)--(v3)--(v5)--(v6);
    \draw (v2)--(v4)--(v5);
    \draw (v3)--(v1)--(v4);
    \node [label=$\beta_2$] at (1.5,0-.5) {};
  \end{tikzpicture}\quad
  \begin{tikzpicture}[scale=0.5]
    \node[v](v1) at (0,2){};
    \node[v](v2) at (1-.2,2){};
    \node[v](v3) at (1.5,1+.2){};
    \node[v](v4) at (1.5,3-.2){};
    \node[v](v5) at (2+.2,2){};
    \node[v](v6) at (3,2){};
    \draw (v1)--(v2)--(v3)--(v5)--(v6);
    \draw (v2)--(v4)--(v5);
    \draw (v3)--(v1)--(v4);
    \draw (v3)--(v6)--(v4);
    \node [label=$\beta_3$] at (1.5,0-.5) {};
  \end{tikzpicture}
  
  \begin{tikzpicture}[scale=0.5]
    \node[v](v1) at (0,2){};
    \node[v](v2) at (1-.2,2){};
    \node[v](v3) at (1.5,1+.2){};
    \node[v](v4) at (1.5,3-.2){};
    \node[v](v5) at (2+.2,2){};
    \node[v](v6) at (3,2){};
    \draw (v1)--(v2)--(v3)--(v5)--(v6);
    \draw (v2)--(v4)--(v5);
    \draw(v3)--(v4);
    \node [label=$\beta_4$] at (1.5,0-.5) {};
  \end{tikzpicture}\quad
  \begin{tikzpicture}[scale=0.5]
    \node[v](v1) at (0,2){};
    \node[v](v2) at (1-.2,2){};
    \node[v](v3) at (1.5,1+.2){};
    \node[v](v4) at (1.5,3-.2){};
    \node[v](v5) at (2+.2,2){};
    \node[v](v6) at (3,2){};
    \draw (v1)--(v2)--(v3)--(v5)--(v6);
    \draw (v2)--(v4)--(v5);
    \draw (v3)--(v1)--(v4);
    \draw(v3)--(v4);
    \node [label=$\beta_5$] at (1.5,0-.5) {};
  \end{tikzpicture}\quad
  \begin{tikzpicture}[scale=0.5]
    \node[v](v1) at (0,2){};
    \node[v](v2) at (1-.2,2){};
    \node[v](v3) at (1.5,1+.2){};
    \node[v](v4) at (1.5,3-.2){};
    \node[v](v5) at (2+.2,2){};
    \node[v](v6) at (3,2){};
    \draw (v1)--(v2)--(v3)--(v5)--(v6);
    \draw (v2)--(v4)--(v5);
    \draw (v3)--(v1)--(v4);
    \draw (v3)--(v6)--(v4);
    \draw(v3)--(v4);
    \node [label=$\beta_6$] at (1.5,0-.5) {};
  \end{tikzpicture}
  
  \begin{tikzpicture}[scale=0.5]
    \node[v](v1) at (0,1){};
    \node[v](v2) at (-.8,1.8){};
    \node[v](v3) at (-1.6,.8){};
    \node[v](v4) at (.8,1.8){};
    \node[v](v5) at (1.6,.8){};
    \node[v](v6) at (.6,.2){};
    \node[v](v7) at (-.2,-.6){};
    \draw (v1)--(v2)--(v3);
    \draw (v1)--(v4)--(v5);
    \draw (v1)--(v6)--(v7);
    \node [label=$\gamma_1$] at (0,-2.5) {};
  \end{tikzpicture}\quad
  \begin{tikzpicture}[scale=0.5]
    \node[v](v1) at (0,1){};
    \node[v](v2) at (-.8,1.8){};
    \node[v](v3) at (-1.6,.8){};
    \node[v](v4) at (.8,1.8){};
    \node[v](v5) at (1.6,.8){};
    \node[v](v6) at (.6,.2){};
    \node[v](v7) at (-.2,-.6){};
    \draw (v1)--(v2)--(v3);
    \draw (v1)--(v4)--(v5);
    \draw (v1)--(v6)--(v7);
    \draw (v1)--(v3);
    \node [label=$\gamma_2$] at (0,-2.5) {};
  \end{tikzpicture}\quad
  \begin{tikzpicture}[scale=0.5]
    \node[v](v1) at (0,1){};
    \node[v](v2) at (-.8,1.8){};
    \node[v](v3) at (-1.6,.8){};
    \node[v](v4) at (.8,1.8){};
    \node[v](v5) at (1.6,.8){};
    \node[v](v6) at (.6,.2){};
    \node[v](v7) at (-.2,-.6){};
    \draw (v1)--(v2)--(v3);
    \draw (v1)--(v4)--(v5);
    \draw (v1)--(v6)--(v7);
    \draw (v1)--(v3);
    \draw (v1)--(v5);
    \node [label=$\gamma_3$] at (0,-2.5) {};
  \end{tikzpicture}\quad
  \begin{tikzpicture}[scale=0.5]
    \node[v](v1) at (0,1){};
    \node[v](v2) at (-.8,1.8){};
    \node[v](v3) at (-1.6,.8){};
    \node[v](v4) at (.8,1.8){};
    \node[v](v5) at (1.6,.8){};
    \node[v](v6) at (.6,.2){};
    \node[v](v7) at (-.2,-.6){};
    \draw (v1)--(v2)--(v3);
    \draw (v1)--(v4)--(v5);
    \draw (v1)--(v6)--(v7);
    \draw (v1)--(v3);
    \draw (v1)--(v5);
    \draw (v1)--(v7);
    \node [label=$\gamma_4$] at (0,-2.5) {};
  \end{tikzpicture}  

  \caption{The induced subgraph obstructions for graphs of linear rank-width at most $1$ that are distance-hereditary.}
  \label{fig:obslrw1}
\end{figure}
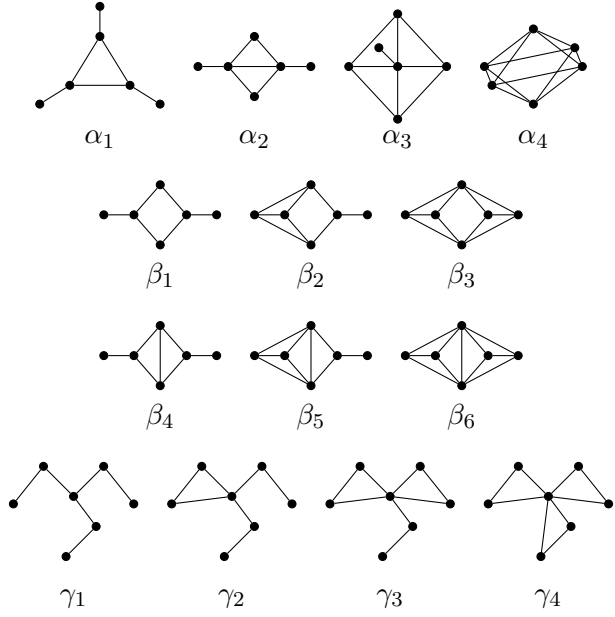

\begin{table} 
  \begin{center}
    \begin{tabular}[t]{|c| c| c| c| c|}
      \hline
      type of $B$ & type of $v_1w_1$ & type of $v_2w_2$ & type of $v_3w_3$ & induced subgraph \\ 
      \hline
      A complete bag& $KS_p$ & $KS_p$ & $KS_p$ & $\alpha_1$ \\
                  & $KS_c$ & $KS_p$ & $KS_p$ & $\alpha_2$ \\
                  & $KS_c$ & $KS_c$ & $KS_p$ & $\alpha_3$ \\
                  & $KS_c$ & $KS_c$ & $KS_c$ & $\alpha_4$ \\
      \hline
      A star bag & $S_cS_c$ & $S_pS_p$ & $S_pS_p$ & $\beta_1$ \\
      with center at $v_1$ & $S_cS_c$ & $S_pS_p$ & $S_pK$ & $\beta_2$ \\
                  & $S_cS_c$ & $S_pK$ & $S_pK$ & $\beta_3$ \\
                  & $S_cK$ & $S_pS_p$ & $S_pS_p$ & $\beta_4$ \\
                  & $S_cK$ & $S_pS_p$ & $S_pK$ & $\beta_5$\\
                  & $S_cK$ & $S_pK$ & $S_pK$ & $\beta_6$\\
      \hline
      A star bag & $S_pS_p$ & $S_pS_p$ & $S_pS_p$ & $\gamma_1$ \\
      with center at  & $S_pK$ & $S_pS_p$ & $S_pS_p$ & $\gamma_2$\\
      a vertex & $S_pK$ & $S_pK$ & $S_pS_p$ & $\gamma_3$\\
      other than $v_i$ & $S_pK$ & $S_pK$ & $S_pK$ & $\gamma_4$ \\
      \hline
    \end{tabular}
  \end{center}
  \caption{Summary of all cases in Theorem~\ref{thm:charlrw2}}\label{table2}
\end{table}

\begin{theorem}[Adler, Farley, and Proskurowski~\cite{AdlerFP11}]\label{thm:charlrw2}
  Let $G$ be a connected graph.
  The following are equivalent.
  \begin{enumerate}
  \item $G$ has linear rank-width at most $1$.
  \item $G$ is distance-hereditary and $G$ has no induced subgraph isomorphic to a graph in 
  \[\{\alpha_1, \alpha_2, \alpha_3, \alpha_4, \beta_1, \beta_2, \beta_3, \beta_4, \beta_5, \beta_6, \gamma_1, \gamma_2, \gamma_3, \gamma_4\}.\]
  \item $G$ has no pivot-minor isomorphic to a graph in $\{C_5, C_6, \alpha_1, \alpha_2, \beta_1, \beta_3, \beta_4, \beta_6\}$.
  \item $G$ has no vertex-minor isomorphic to a graph in $\{C_5, \alpha_1, \beta_1\}$.
  \end{enumerate}
\end{theorem}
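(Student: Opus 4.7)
The plan is to base the argument on Theorem~\ref{thm:charlrw1}, which reformulates $(1)$ as ``$G$ is distance-hereditary and $T_D$ is a path''. I will establish the cycle $(1)\Rightarrow(4)\Rightarrow(3)\Rightarrow(2)\Rightarrow(1)$; only the last implication requires substantive work.

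For $(1)\Rightarrow(4)$, it is standard that $\lrw(C_5)=2$ (as $C_5$ has rank-width $2$), and Theorem~\ref{thm:charlrw1} gives $\lrw(\alpha_1)=\lrw(\gamma_1)=2$ because the canonical decomposition trees of $\alpha_1$ and $\gamma_1$ are each a star on three leaves rather than a path. Lemma~\ref{lem:vm-rw} then forbids all three as vertex-minors whenever $\lrw(G)\le 1$. For $(4)\Rightarrow(3)$, every pivot-minor is a vertex-minor, so it suffices to produce, for each of $C_6,\alpha_3,\alpha_4,\alpha_6,\gamma_3$, an explicit sequence of local complementations and deletions yielding one of $C_5,\alpha_1,\gamma_1$; these are small, routine computations. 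For $(3)\Rightarrow(2)$, induced subgraphs are in particular pivot-minors, so a finite check confirms that each graph in the induced-subgraph list of (2) has a pivot-minor in the list of (3); moreover $C_5$ lies in the pivot-minor list, and Oum's characterization ($G$ has rank-width $\le 1$ iff $G$ is distance-hereditary iff $G$ has no $C_5$-vertex-minor) forces $G$ to be distance-hereditary.

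The main step is $(2)\Rightarrow(1)$. Assume $G$ is distance-hereditary and $\lrw(G)\ge 2$; by Theorem~\ref{thm:charlrw1}, $T_D$ is not a path, so some bag $B$ of $D$ admits three components $T_1,T_2,T_3$ of $D\setminus V(B)$. For each $i\in\{1,2,3\}$ set $v_i:=\zeta_b(D,B,T_i)$ and $w_i:=\zeta_t(D,B,T_i)$, and fix an unmarked vertex $u_i$ of $T_i$ linked to $w_i$, so that $u_iv_i\in E(\origin{D})$ by Lemma~\ref{lem:represent}. By Theorem~\ref{thm:Bouchet88}, $B$ is a star or a complete bag, and Theorem~\ref{thm:can-forbid} restricts each marked-edge type $v_iw_i$ to $KS_p$, $KS_c$, $S_cS_c$, $S_cK$, $S_pS_p$, or $S_pK$. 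Splitting on whether $B$ is complete, a star with centre one of the $v_i$, or a star whose centre lies elsewhere, and then on the types of $v_1w_1,v_2w_2,v_3w_3$ up to permutation, yields exactly the fourteen configurations of Table~\ref{table2}. In each case I would extend $\{u_1,u_2,u_3\}$ by at most four more unmarked vertices reached from each $u_i$ through short alternating paths dictated by the corresponding marked-edge type, and verify via Lemma~\ref{lem:represent} that the resulting induced subgraph on at most seven vertices is precisely the listed $\alpha_i$, $\beta_j$, or $\gamma_k$ from Figure~\ref{fig:obslrw1}.

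The hardest part is this $(2)\Rightarrow(1)$ case analysis: each of the fourteen rows of Table~\ref{table2} demands a distinct choice of witness vertices in the $T_i$ and a separate adjacency check against the target picture. The four $\alpha_i$ cases correspond to $B$ complete with $j\in\{0,1,2,3\}$ marked edges of type $KS_c$; the six $\beta_j$ cases correspond to $B$ a star with centre at some $v_i$; the four $\gamma_k$ cases correspond to $B$ a star whose centre is an unmarked vertex outside $\{v_1,v_2,v_3\}$. All other steps are direct appeals to the earlier results or finite verifications.
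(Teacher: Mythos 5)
Your overall route coincides with the paper's: $(1)\Rightarrow(4)$ via Lemma~\ref{lem:vm-rw} and the fact that $C_5$, $\alpha_1$, $\gamma_1$ have linear rank-width $2$; the implications $(4)\Rightarrow(3)\Rightarrow(2)$ treated as finite checks (the paper simply cites \cite{AdlerFP11} for these); and the substantive direction $(2)\Rightarrow(1)$ argued exactly as in the paper, namely by Theorem~\ref{thm:charlrw1} a distance-hereditary $G$ with $\lrw(G)\ge 2$ has a bag $B$ whose removal leaves at least three components, and the fourteen configurations of Table~\ref{table2} (type of $B$, and types of the marked edges $v_iw_i$ as constrained by Theorems~\ref{thm:Bouchet88} and~\ref{thm:can-forbid}) each force one of the listed induced subgraphs, with adjacencies certified through Lemma~\ref{lem:represent}. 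That main part is fine and is the paper's own argument.

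There is, however, a genuine flaw in your sketch of $(3)\Rightarrow(2)$. You obtain distance-hereditariness from the facts that ``$C_5$ lies in the pivot-minor list'' and that distance-hereditary graphs are exactly the graphs with no vertex-minor isomorphic to $C_5$. But excluding $C_5$ as a pivot-minor is strictly weaker than excluding it as a vertex-minor: pivoting preserves bipartiteness, so the bipartite, non-distance-hereditary graph $C_6$ has no pivot-minor isomorphic to $C_5$ whatsoever --- which is precisely why $C_6$ must appear in the list of $(3)$. Hence the inference as you state it is invalid. The repair is to use the induced-subgraph characterization of distance-hereditary graphs (house, gem, domino, and cycles of length at least five) and verify by a finite computation that each of these obstructions has a pivot-minor in the list of $(3)$ (with $C_6$ handling the bipartite ones), in the spirit of \cite{AdlerFP11}. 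With that replacement, and with the explicit local-complementation reductions you promise for $(4)\Rightarrow(3)$ actually carried out, your argument is complete and essentially identical to the published one.
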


\begin{proof}
  By Lemma~\ref{lem:vm-rw}, $((1)\rightarrow (4))$ is clear as 
  $C_5$, $\alpha_1$ and $\beta_1$ have linear rank-width $2$.
  We can easily confirm the directions $((4)\rightarrow (3)\rightarrow (2))$; see~\cite{AdlerFP11}. 
  We add a proof for $((2)\rightarrow (1))$.

  Suppose that $G$ has linear rank-width at least $2$ and it is distance-hereditary.
  Let $D$ be the canonical split decomposition of $G$.
  By Theorem~\ref{thm:charlrw1}, $T_D$ is not a path.
  Thus there exists a bag $B$ of $D$ such that 
  $D- V(B)$ has at least three connected components $T_1$, $T_2$, $T_3$. For each $i\in \{1,2,3\}$, let $v_i:=\zeta_b(D, B,T_i)$ and $w_i:=\zeta_c(D, B,T_i)$.
  We have three cases; $B$ is a complete bag, or $B$ is a star bag with the center at one of $v_1, v_2, v_3$, or $B$ is a star bag with the center at a vertex of $V(B)\setminus \{v_1, v_2, v_3\}$.
 
  If $B$ is a complete bag, then
  $G$ has an induced subgraph isomorphic to one of $\alpha_1, \alpha_2, \alpha_3, \alpha_4$
  depending on the types of the marked edges $v_iw_i$.
  If $B$ is a star bag with the center at one of $v_1, v_2, v_3$, then $G$ has an induced subgraph isomorphic to one of 
  $\beta_1, \beta_2, \ldots, \beta_6$.
  Finally, 
  if $B$ is a star bag with the center at a vertex of $V(B)\setminus \{v_1, v_2, v_3\}$, then 
  $G$ has an induced subgraph isomorphic to one of $\gamma_1, \gamma_2, \gamma_3, \gamma_4$.
  We summarize all the cases in Table~\ref{table2}.
\end{proof}

\section{Conclusion}

In this paper we used the characterization of the linear rank-width of distance-hereditary graphs given in \cite{AdlerKK15} to
 prove that Question \ref{con:tree} is true if and only if it is true in prime graphs.
 Also, for each non-negative integer $k$, we compute a set of distance-hereditary graphs such that 
 every distance-hereditary graph of linear rank-width at least $k+1$ contains a vertex-minor isomorphic to one of the graphs in the set.

Computing an upper bound on the size of vertex-minor obstructions for graphs of bounded linear rank-width is a challenging open question. Until now only a
bound on obstructions for graphs of bounded rank-width is known \cite{Oum05}.  Secondly, resolving Question \ref{con:tree} in all graphs seems to require new techniques. 
We currently do not have any idea on how to reduce any
graph of small rank-width but large linear rank-width into a distance-hereditary graph whose decomposition tree has large path-width. One might start with graphs of rank-width $2$. 

\section*{Acknowledgment}
The authors would like to thank Isolde Adler for initial discussions on this problem.


\begin{thebibliography}{10}

\bibitem{AdlerFP11}
Isolde Adler, Arthur~M. Farley, and Andrzej Proskurowski.
\newblock Obstructions for linear rank-width at most 1.
\newblock {\em Discrete Appl. Math.}, 168:3--13, 2014.

\bibitem{AdlerK13}
Isolde Adler and Mamadou~Moustapha Kant{\'e}.
\newblock Linear rank-width and linear clique-width of trees.
\newblock {\em Theoret. Comput. Sci.}, 589:87--98, 2015.

\bibitem{AdlerKK15}
Isolde Adler, Mamadou~Moustapha Kant\'e, and O-joung Kwon.
\newblock Linear rank-width of distance-hereditary graphs {I}. {A}
  polynomial-time algorithm.
\newblock {\em Algorithmica}, 78(1):342--377, 2017.

\bibitem{BandeltM86}
Hans-J{\"u}rgen Bandelt and Henry~Martyn Mulder.
\newblock Distance-hereditary graphs.
\newblock {\em J. Comb. Theory, Ser. B}, 41(2):182--208, 1986.

\bibitem{BienstockRST91}
Daniel Bienstock, Neil Robertson, Paul~D. Seymour, and Robin Thomas.
\newblock Quickly excluding a forest.
\newblock {\em J. Comb. Theory, Ser. B}, 52(2):274--283, 1991.

\bibitem{BlumensathC10}
Achim Blumensath and Bruno Courcelle.
\newblock On the monadic second-order transduction hierarchy.
\newblock {\em Logical Methods in Computer Science}, 6(2), 2010.

\bibitem{Bouchet1987a}
Andr{\'e} Bouchet.
\newblock Isotropic systems.
\newblock {\em European J. Combin.}, 8(3):231--244, 1987.

\bibitem{Bouchet1987b}
Andr{\'e} Bouchet.
\newblock Reducing prime graphs and recognizing circle graphs.
\newblock {\em Combinatorica}, 7(3):243--254, 1987.

\bibitem{Bouchet1988}
Andr{\'e} Bouchet.
\newblock Graphic presentations of isotropic systems.
\newblock {\em J. Comb. Theory Ser. B}, 45(1):58--76, 1988.

\bibitem{Bouchet88}
Andr\'e Bouchet.
\newblock Transforming trees by successive local complementations.
\newblock {\em J. Graph Theory}, 12(2):195--207, 1988.

\bibitem{Bouchet1989a}
Andr{\'e} Bouchet.
\newblock Connectivity of isotropic systems.
\newblock In {\em Combinatorial Mathematics: Proceedings of the Third
  International Conference (New York, 1985)}, volume 555 of {\em Ann. New York
  Acad. Sci.}, pages 81--93, New York, 1989. New York Acad. Sci.

\bibitem{Bui-XuanKL13}
Binh{-}Minh Bui{-}Xuan, Mamadou~Moustapha Kant{\'{e}}, and Vincent Limouzy.
\newblock A note on graphs of linear rank-width 1.
\newblock {\em CoRR}, abs/1306.1345, 2013.

\bibitem{CorneilLB1981}
D.~G. Corneil, H.~Lerchs, and L.~Stewart Burlingham.
\newblock Complement reducible graphs.
\newblock {\em Discrete Appl. Math.}, 3(3):163--174, 1981.

\bibitem{CourcelleBanhoff08}
Bruno Courcelle.
\newblock Graph transformations expressed in logic and applications to
  structural graph theory.
\newblock Report of Banff workshop in Graph Minors (08w5079), 2008.
\newblock http://www.birs.ca/workshops/2008/08w5079/report08w5079.pdf.

\bibitem{CunninghamE80}
William~H. Cunnigham and Jack Edmonds.
\newblock A combinatorial decomposition theory.
\newblock {\em Canadian Journal of Mathematics}, 32:734--765, 1980.

\bibitem{Cunningham1982}
William~H. Cunningham.
\newblock Decomposition of directed graphs.
\newblock {\em SIAM J. Algebraic Discrete Methods}, 3(2):214--228, 1982.

\bibitem{Dahlhaus00}
Elias Dahlhaus.
\newblock Parallel algorithms for hierarchical clustering and applications to
  split decomposition and parity graph recognition.
\newblock {\em J. Algorithms}, 36(2):205--240, 2000.

\bibitem{Diestel05}
Reinhard Diestel.
\newblock {\em Graph theory}, volume 173 of {\em Graduate Texts in
  Mathematics}.
\newblock Springer, Heidelberg, fourth edition, 2010.

\bibitem{EllisST94}
Jonathan~A. Ellis, Ivan~Hal Sudborough, and Jonathan~S. Turner.
\newblock The vertex separation and search number of a graph.
\newblock {\em Inf. Comput.}, 113(1):50--79, 1994.

\bibitem{Ganian10}
Robert Ganian.
\newblock Thread graphs, linear rank-width and their algorithmic applications.
\newblock In Costas~S. Iliopoulos and William~F. Smyth, editors, {\em IWOCA},
  volume 6460 of {\em Lecture Notes in Computer Science}, pages 38--42.
  Springer, 2010.

\bibitem{GP2012}
Emeric Gioan and Christophe Paul.
\newblock Split decomposition and graph-labelled trees: characterizations and
  fully dynamic algorithms for totally decomposable graphs.
\newblock {\em Discrete Appl. Math.}, 160(6):708--733, 2012.

\bibitem{HammerM90}
Peter~L. Hammer and Fr{\'{e}}d{\'{e}}ric Maffray.
\newblock Completely separable graphs.
\newblock {\em Discrete Appl. Math.}, 27(1-2):85--99, 1990.

\bibitem{JeongKO16}
Jisu Jeong, Eun~Jung Kim, and Sang-il Oum.
\newblock Constructive algorithm for path-width of matroids.
\newblock In {\em Proceedings of the {T}wenty-{S}eventh {A}nnual {ACM}-{SIAM}
  {S}ymposium on {D}iscrete {A}lgorithms}, pages 1695--1704. ACM, New York,
  2016.

\bibitem{JKO2014}
Jisu Jeong, O-joung Kwon, and Sang-il Oum.
\newblock Excluded vertex-minors for graphs of linear rank-width at most {$k$}.
\newblock {\em European J. Combin.}, 41:242--257, 2014.

\bibitem{Kashyap2008}
Navin Kashyap.
\newblock Matroid pathwidth and code trellis complexity.
\newblock {\em SIAM J. Discrete Math.}, 22(1):256--272, 2008.

\bibitem{Thilikos2014}
Athanassios Koutsonas, Dimitrios~M. Thilikos, and Koichi Yamazaki.
\newblock Outerplanar obstructions for matroid pathwidth.
\newblock {\em Discrete Math.}, 315:95--101, 2014.

\bibitem{Oum2004a}
Sang{-}il Oum.
\newblock Rank-width and well-quasi-ordering.
\newblock {\em SIAM J. Discrete Math.}, 22(2):666--682, 2008.

\bibitem{Oum2006a}
Sang{-}il Oum.
\newblock Excluding a bipartite circle graph from line graphs.
\newblock {\em J. Graph Theory}, 60(3):183--203, 2009.

\bibitem{Oum12}
Sang{-}il Oum.
\newblock Rank-width and well-quasi-ordering of skew-symmetric or symmetric
  matrices.
\newblock {\em Linear Algebra and its Applications}, 436(7):2008 -- 2036, 2012.

\bibitem{Oum05}
Sang\mbox{-}il Oum.
\newblock Rank-width and vertex-minors.
\newblock {\em J. Comb. Theory, Ser. B}, 95(1):79--100, 2005.

\bibitem{OumS06}
Sang\mbox{-}il Oum and Paul~D. Seymour.
\newblock Approximating clique-width and branch-width.
\newblock {\em J. Comb. Theory, Ser. B}, 96(4):514--528, 2006.

\bibitem{RobertsonS83}
Neil Robertson and Paul~D. Seymour.
\newblock Graph minors. {I}. {E}xcluding a forest.
\newblock {\em J. Comb. Theory, Ser. B}, 35(1):39--61, 1983.

\bibitem{RS2004}
Neil Robertson and Paul~D. Seymour.
\newblock Graph minors. {XX}. {W}agner's conjecture.
\newblock {\em J. Comb. Theory Ser. B}, 92(2):325--357, 2004.

\bibitem{TakahashiUK94}
Atsushi Takahashi, Shuichi Ueno, and Yoji Kajitani.
\newblock Minimal acyclic forbidden minors for the family of graphs with
  bounded path-width.
\newblock {\em Discrete Math.}, 127(1-3):293--304, 1994.

\end{thebibliography}
\end{document}